\newtheorem{theorem}{Theorem}[section]
\newtheorem{algorithm}{Algorithm}[section]
\newtheorem{lemma}[theorem]{Lemma}
\newtheorem{proposition}[theorem]{Proposition}
\newtheorem{corollary}[theorem]{Corollary} 
\newtheorem{definition}[theorem]{Definition}
\newtheorem{remark}[theorem]{Remark}
\definecolor{lightgrey}{rgb}{.7,.7,.7}
\DeclareMathOperator*{\argmin}{arg\,min}
\numberwithin{equation}{section}
\newcommand{\energy}{\mathcal{G}}
\newcommand{\transop}{\mathcal{J}}
\newcommand{\ipol}[1][\tria]{\mathcal{I}_{#1}}
\newcommand{\refine}{\ensuremath{{\textsf{REFINE}}}}
\newcommand{\refd}[2]{\ensuremath{{\textsf{ref'd}(#1; #2)}}}
\newcommand{\midpoint}[1]{\ensuremath{{\textsf{mid}(#1)}}}
\newcommand{\Curl}{\mathrm{Curl}}
\providecommand{\osc}{\operatorname{osc}}
\newcommand{\dx}{\ensuremath{\,\mathrm{d}x}}
\newcommand{\ds}{\,\mathrm{d}s\xspace}
\providecommand{\tria}{\ensuremath{\mathcal{T}}}
\providecommand{\bbT}{\mathbb{T}}
\providecommand{\bbThat}{\widehat{\bbT}}
\providecommand{\sides}{\mathcal{S}}
\providecommand{\nodes}{\mathcal{N}}
\providecommand{\tildeMk}[1][k]{\widetilde{\mathcal{M}}_{#1}}
\providecommand{\Mk}[1][k]{\mathcal{M}_{#1}}
\providecommand{\VoT}[1][\tria]{\ensuremath{\mathbb{V}_{#1}}}
\providecommand{\vT}[1][\tria]{\ensuremath{v_{#1}}}
\providecommand{\uT}[1][\tria]{\ensuremath{u_{#1}}}
\providecommand{\bvT}[1][\tria]{\ensuremath{\boldsymbol{v}_{#1}}}
\providecommand{\bwT}[1][\tria]{\ensuremath{\boldsymbol{w}_{#1}}}
\providecommand{\buT}[1][\tria]{\ensuremath{\boldsymbol{u}_{#1}}}
\providecommand{\bfu}{\ensuremath{\boldsymbol{u}}}
\providecommand{\bfv}{\ensuremath{\boldsymbol{v}}}
\providecommand{\bff}{\ensuremath{\boldsymbol{f}}}
\providecommand{\estT}[1][\tria]{\ensuremath{\mathcal{E}_{#1}}}
\providecommand{\estTbar}[1][\tria]{\ensuremath{\bar{\mathcal{E}}_{#1}}}
\providecommand{\Cleq}{\ensuremath{\lesssim}}
\providecommand{\Cgeq}{\ensuremath{\gtrsim}}
\providecommand{\id}{\operatorname{id}}
\providecommand{\tangente}{{\boldsymbol{t}}}
\providecommand{\settmp}[2]{{#1\{{#2}#1\}}}
\providecommand{\set}[1]{\settmp{}{#1}}
\providecommand{\abstmp}[2]{{#1\lvert{#2}#1\rvert}}
\providecommand{\abs}[1]{\abstmp{}{#1}}
\providecommand{\normtmp}[2]{{#1\lVert{#2}#1\rVert}}
\providecommand{\norm}[2][\Omega]{\normtmp{}{#2}_{#1}}
\providecommand{\setN}{\ensuremath{\mathbb{N}}}
\providecommand{\setR}{\ensuremath{\mathbb{R}}}
\providecommand{\jumptmp}[2]{#1\llbracket{#2}#1\rrbracket}
\providecommand{\jump}[1]{\jumptmp{}{#1}}
\newcommand{\hT}[1][\tria]{h_{#1}}
\newcommand{\CR}[1][\tria]{\mathrm{CR}^1_0( #1)}
\newcommand{\QT}[1][\tria]{\mathrm{Q}_0(#1)}
\newcommand{\gradnc}{\nabla_{\negthinspace\mathtt{NC}}}
\newcommand{\divnc}{\operatorname{div}_{\mathtt{NC}}}
\newcommand{\av}{J}
\newcommand{\compop}[1][\tria]{\mathcal{K}_{#1}}
\newcommand{\Vcr}[1][\tria]{V_{\mathrm{CR}}(#1)}
\begin{document}

\author[C.~Kreuzer]{Christian Kreuzer}
\address{Christian Kreuzer,
 Fakult{\"a}t f{\"u}r Mathematik,
 Ruhr-Universit{\"a}t Bochum,
 Universit{\"a}tsstrasse 150, D-44801 Bochum, Germany
 }
\urladdr{http://www.ruhr-uni-bochum.de/ffm/Lehrstuehle/Kreuzer/index.html}
\email{christian.kreuzer@rub.de}

\author[M.~Schedensack]{Mira Schedensack}
\address{Mira Schedensack,
 Institut f\"ur Mathematik, 
            Humboldt-Universit\"at zu Berlin, 
            Unter den Linden 6, D-10099 Berlin, Germany 
 }
\email{schedens@math.hu-berlin.de}

\title[Instance optimal nonconforming AFEMs]{Instance optimal
  Crouzeix-Raviart\\ adaptive finite element methods for the\\ Poisson
  and Stokes problems}

\begin{abstract}
  We extend the ideas of Diening, Kreuzer, and Stevenson [Instance optimality of the
  adaptive maximum strategy, Found. Comput. Math. (2015)],   
  from conforming
  approximations of the Poisson problem 
  to nonconforming Crouzeix-Raviart approximations of the Poisson and the
  Stokes problem in 2D. As a consequence, we obtain instance optimality of
  an AFEM with a
  modified maximum marking strategy.
\end{abstract}

\keywords{Adaptive finite element methods, optimal cardinality,
  convergence, maximum marking strategy, nonconforming finite element methods}

\subjclass[2010]{65N30, 65N12, 65N50, 65N15, 41A25}

\maketitle

\section{Introduction}
\label{sec:introduction}

In recent years there has been an immense progress in the convergence and optimality
analysis of adaptive finite element methods (AFEMs) for the numerical
solution of partial differential equations. Such methods typically
consist of a loop
\begin{align}\label{SEMR}
  \textsf{SOLVE}\rightarrow   \textsf{ESTIMATE} \rightarrow
  \textsf{MARK} \rightarrow \textsf{REFINE}.
\end{align}
In \textsf{ESTIMATE} local indicators are computed. These indicators
are used in \textsf{MARK} in order to mark elements for refinement
based on some marking strategy. 

In 1996, D\"orfler  \cite{Doerfler:96} introduced a bulk chasing
marking strategy, which guarantees linear convergence for sufficiently
fine initial triangulations. The latter condition was removed by
Morin, Nochetto \& Siebert in \cite{MoNoSi:00,MoNoSi:02}.
Binev, Dahmen, and DeVore \cite{BiDaDeV:04} supplemented this AFEM by a
  so-called \emph{coarsening} routine and showed instance optimality
  of the resulting method; compare also with  \cite{Binev:07}. 
This means that, for a triangulation generated by the
  AFEM the corresponding error is, up to some multiplicative constant, smaller than 
the error corresponding to any (admissible) triangulation with,  up to some second fixed
multiplicative constant, at most the same number of 
newly generated triangles.
  We emphasize that this property is satisfied for every 
triangulation generated by the AFEM and is in particular non-asymptotic.

Relying on the
same strategy, 
Stevenson~\cite{Stevenson:07} and later Cascon, Kreuzer, Nochetto \& Siebert 
\cite{CaKrNoSi:08} proved optimal convergence rates of an AFEM,
without coarsening
when the bulk parameter is below some problem
dependent threshold. The latter result refers to convergence rates for
the so called total error, which is the sum of the error and the
oscillation. Together with the linear convergence this even entails 
optimal
computational complexity of the method; compare with \cite{Stevenson:07}.
These features promoted the popularity of
D\"orfler's marking strategy in the numerical analysis of conforming
and nonconforming AFEMs for
elliptic problems;
compare e.g.\ with \cite{MoNoSi:03,BelenkiDieningKreuzer:12,KreuzerSiebert:11,
ChHoXu:06,MekchayNochetto:05,FeiFuehPraet:2013,BonitoNochetto:2010,
BeckerMaoShi:10,Rabus:2010,CarstensenHoppe:06b} 
as well as with the overview articles
\cite{NoSiVe:09,CarstensenFeischlPagePraetorius:2014} and the references
therein. However, it can be observed in numerical experiments 
that D\"orfler's marking strategy may indeed fail to be
optimal for large marking parameters; compare e.g. with
  \cite[\S8]{BraesFraunholzHoppe:2004}. Furthermore,
the sharp value of the threshold is in general unknown respectively 
the
theoretical bounds are very conservative. 
This makes the choice of the `right' marking parameter delicate. 

In contrast to this, only very little is known about the \emph{maximum
  marking strategy}, 
which marks all elements whose
indicators are bigger than the marking parameter times the maximal
indicator; compare e.g.\ with \cite{MoSiVe:08,Siebert:11}.
This strategy is widely-used in engineering and scientific
computation and numerical computations indicate that
its performance is 
robust with respect to the choice of the marking parameter. 
This observation was recently verified theoretically by Diening, Kreuzer \&
Stevenson in \cite{DieKreuStev:15} who proved
instance optimality of a conforming AFEM
with modified maximum marking strategy for the Poisson problem in two
dimensions.
In particular, the maximum
strategy is applied to some modified indicators, which accumulate
effects according to conforming mesh refinements and the instance
optimality refers to the total error. We recall that,
apart from the oscillatory term in the total error, instance
optimality is a non-asymptotic property and thus implies convergence with optimal
rates.
The result relies on new ideas
exploiting sharp discrete efficiency and discrete reliability 
bounds, a newly
developed tree structure of newest vertex bisection as well as a
newly developed so-called lower diamond estimate for the Dirichlet energy.

The aim of this paper is to extend the ideas of \cite{DieKreuStev:15} to
Crouzeix-Raviart AFEMs for the Poisson and the Stokes equations. 
For the Stokes problem, standard conforming finite elements 
are based on the saddle point formulation and satisfy the  
incompressibility condition in a weak sense only; see \cite{BrezziFortin:91}.
In contrast to this, Crouzeix-Raviart finite elements 
allow for piecewise exactly divergence free velocity
approximations. 
The discretisation therefore becomes a minimising problem on the subspace
of nonconforming divergence free finite element functions. 
This fact was used in the proofs of optimal convergence rates of
nonconforming AFEMs with D\"orfler's marking strategy in
\cite{BeckerMao:11,CarstensenPeterseimRabus:2013,HuXu:2013}. 
In our case, the above observation allows to define generalised energies
for the Stokes problem similarly as for the Poisson problem. 
For both problems, the generalised energy 
is monotone under refinement, satisfies the lower diamond estimate, 
and the energy difference is equivalent to the sum of the nonconforming
total error, i.e., the sum of the nonconforming error and the
oscillation.  Based on an improvement of the so called
transfer operator from \cite{CarstensenGallistlSchedensack:2013}, we 
conclude the precise discrete efficiency and
discrete reliability bounds. According to \cite{DieKreuStev:15}, 
we conclude that an AFEM with modified marking strategy is
instance optimal for the nonconforming total error.

The outline of the paper is as follows. In 
Section~\ref{sec:abstract-setting} we present the abstract setting 
used in \cite{DieKreuStev:15} including the error bounds,
the lower diamond estimate, and the AFEM. 
In Section~\ref{sec:poisson} we shall verify this setting for the
Poisson problem and in Section~\ref{sec:stokes} we extend the ideas 
to the Stokes equations.
The appendix presents the required modifications of the 
transfer operator from~\cite{CarstensenGallistlSchedensack:2013}.

\section{Abstract Setting}
\label{sec:abstract-setting}
In this section we shall introduce the basic framework, 
which is used in \cite{DieKreuStev:15} in order to prove 
instance optimality of 
a conforming adaptive finite element method for
Poisson's problem.

\subsection{Refinement Framework}
\label{sec:refinement}
We consider mesh adaptation 
by newest vertex bisection (NVB) in two dimensions; compare with
\cite{Baensch:91,Kossaczky:94,Maubach:95,Traxler:97,BiDaDeV:04,Stevenson:08} as
well as \cite{NoSiVe:09,SchmidtSiebert:05} and the references therein.
We denote by $\tria_\bot$ a conforming initial or ``bottom''
triangulation of a polygonal domain $\Omega \subset \mathbb{R}^2$
such that the labelling of the newest vertices satisfies the {\em matching
  assumption}: 
\begin{gather}
  \label{ass:matching}
  \text{\parbox{10cm}{\em If, for $T,T' \in \tria_\bot$, $T \cap T'$
      is
      the refinement edge of $T$, \\[1mm]
      then it is the refinement edge of $T'$.}}
\end{gather} 
It is shown in \cite{BiDaDeV:04}, that 
such a labelling can be found for any conforming $\tria_\bot$.
We denote by $\bbT$ the set of all conforming refinements  
of $\tria_\bot$ that can be created by 
finitely many newest vertex bisections. Then $\bbT$ is uniformly shape
regular, i.e., there exists a constant $C=C(\tria_\bot)$, such that 
\begin{align*}
  \sup_{\tria\in\bbT}\max_{T\in\tria}
  \frac{\operatorname{diam}(T)}{\abs{T}^{1/2}}\le C<\infty. 
\end{align*}
For $\tria\in\bbT$, let $\sides(\tria)$ be the set of its sides and
$\nodes(\tria)$ the set of its nodes. We denote by  
$\hT:\Omega\to\mathbb{R}$, $\hT|_T:=h_T:=|T|^{1/2}$, 
$T\in\tria$, the piecewise
constant mesh-size function and by $h_S:=|S|$ the length of the side
$S\in\sides(\tria)$.

For $\tria,\tria_\star \in \bbT$, we write $\tria \leq \tria_\star$ or $\tria_\star
\geq \tria$, when  $\tria_\star$ is a {\em refinement} of $\tria$ or,
equivalently, when $\tria$ is a {\em coarsening} of $\tria_\star$. 
This defines a partial ordering on~$\bbT$. We denote by $\tria\wedge
\tria_\star\in\bbT$ the finest common coarsening and by
$\tria\vee\tria_\star\in\bbT$ the coarsest common refinement of $\tria$ and
$\tria_\star$. Then $(\bbT, \le)$ is a lattice with
bottom~$\tria_\bot$. Moreover, we can formally define a largest
refinement $\tria^\top:=\bigvee_{\tria\in\bbT}\tria$, and set
$\tria^\top\ge\tria$ for all $\tria\in\bbT$. Then $\widehat\bbT:=\bbT\cup
\{\tria^\top\}$ is a bounded lattice; compare with
\cite{DieKreuStev:15}. 
For the sake of a uniform presentation, we set
$\tria\setminus\tria^\top:=\tria$ and
$\sides(\tria)\setminus\sides(\tria^\top):=\sides(\tria)$.

Let  $\omega_S^\tria :=
\operatorname{interior} \bigcup\{T\in\tria\mid S\subset \partial T\}$ denote
the patch of $S\in\sides(\tria)$. 
For $\mathcal{U}\subset\tria\in\bbT$, we
define as
$\Omega(\mathcal{U}):=\operatorname{interior}(\bigcup_{T\in\mathcal{U}}T)$
the domain of $\mathcal{U}$. 
Thanks to basic properties of bisection, for $\tria,\tria_\star\in\bbT$,
$\tria_\star\ge\tria$, we have the mesh-size reduction property
\begin{align}\label{eq:meshsize-red}
  \hT[\tria_\star]^2|_{\Omega(\tria\setminus\tria_\star)}\le\frac12\,
  \hT^2|_{\Omega(\tria\setminus\tria_\star)}.
\end{align}

The following definition introduces diamonds of triangulations. 
\begin{definition} \label{d:LD} For
    $\set{\tria_1, \dots, \tria_m} \subset \bbT$, $m\in\setN$, we call
    $(\tria^\wedge,\tria_\vee;\tria_1, \dots, \tria_m)$ a {\em lower
      diamond} in $\bbT$ of size $m$, 
       when $\tria^\wedge=\bigwedge_{j=1}^m \tria_j$,
    $\tria_\vee=\bigvee_{j=1}^m \tria_j$, and the areas of {\em
      coarsening} $\Omega(\tria_j \setminus \tria_\vee)$ are pairwise
    disjoint.
  
\end{definition}

We suppose that a function $\refine$ is at our disposal such that 
for $\tria \in \bbT$ and $\mathcal{M} \subset \sides(\tria)$ we have
that $\tria_\star=\refine(\tria;\mathcal{M})$ is the {\em coarsest
  refinement of $\tria$ in $\bbT$ with $\mathcal{M} \cap
  \sides(\tria_\star)=\emptyset$}. To be more precise, we have 
$$
\refine({\tria};{\mathcal{M}}) \:= \bigwedge \set{\tria' \in \bbT\,:\,
  \tria' \geq\tria,\, \mathcal{M} \cap \sides(\tria')=\emptyset}.
$$
Note that this routine can be implemented using
iterative or recursive bisection; see
\cite{Baensch:91,Kossaczky:94,Maubach:95,Mitchell:89,Traxler:97,Stevenson:08}.

Let $\{\tria_k\}_{k\in\setN_0}\subset\bbT$, with
$\tria_0=\tria_\bot$ and $\mathcal{M}_k\subset\sides(\tria_k)$, $k\in\setN_0$, such
that $\tria_{k+1}=\refine({\tria_k};{\mathcal{M}_k})$,
$k\in\setN_0$. Then 
\begin{align}\label{eq:BDD}
  \#(\tria_k\setminus\tria_\bot)\leq C \sum_{j=0}^{k-1}\#\mathcal{M}_j,
\end{align}
with a constant $C>0$ depending solely on $\tria_\bot$; compare with
\cite{BiDaDeV:04,DieKreuStev:15}.

\begin{remark}\label{rem:Pop}
  We note that, in order to obtain~\eqref{eq:BDD}, 
  for an iterative realization of the NVB the matching
  assumption~\eqref{ass:matching}  is not necessary in 2D; see
  \cite{KarkulikPavlicekPraetorius:13}.   
  However, assumption~\eqref{ass:matching} is needed to properly
  define a so called \emph{population} tree structure on
  triangulations, which is a newly developed crucial tool in the instance
  optimality analysis of~\cite{DieKreuStev:15}.
\end{remark}

\subsection{Problem Setting}
\label{sec:energy}
In this section we present the problem properties, which have been
used in the instance optimality analysis of \cite{DieKreuStev:15}.

\paragraph{\textbf{Energy}} We assume that there exists
an energy $\energy:\widehat\bbT\to\setR$,
which is monotonically decreasing with respect to the partial ordering of
$\widehat\bbT$, i.e., we have
\begin{align}\label{eq:emon}
  \energy(\tria_\star)\le\energy(\tria)\qquad\text{for
    all}~\tria,\tria_\star\in\bbThat, \tria\le\tria_\star. 
\end{align}
The overall aim is then to efficiently construct a
triangulation $\tria\in\bbT$, such that 
\begin{align}\label{eq:emin}
  \energy(\tria)-\energy(\tria^\top)\qquad\text{is {\em small}.}
\end{align}

\paragraph{\textbf{Error Bounds}} For $\tria\in\bbT$, the energy
difference is equivalent to the squared sum of some computable
local a posteriori indicators $\estT^2(S)\ge 0$, $S\in\sides(\tria$:
\begin{subequations}\label{eq:errbnd}
  \begin{align}\label{eq:errbnda}
    \energy(\tria)-\energy(\tria^\top)&\eqsim \estT^2(\sides(\tria)):=\sum_{S\in\sides(\tria)}\estT^2(S).
  \intertext{Moreover, if $\tria,\tria_\star\in\bbT$, 
  $\tria_\star\ge\tria$, then we have the \emph{localised} equivalence relation}
\label{eq:errbndb}
    \energy(\tria)-\energy(\tria_\star)&\eqsim
    \estT^2(\sides(\tria)\setminus\sides(\tria_\star)). 
  \end{align}
\end{subequations}

Here and in the following, the formula $A\Cleq B$ or $B\Cgeq A$ abbreviates 
that there exists some positive generic constant $C>0$, depending only
on properties of $\mathcal{G}$ and parameters of $\tria_\bot$, such
that $A\leq C B$; $A\eqsim B$ abbreviates $A\Cleq B\Cleq A$.

  \paragraph{\textbf{Lower Diamond Estimates}}
  We further assume, that the energy~$\energy: \bbThat \to \setR$ satisfies the {\em lower
      diamond estimate}, i.e., we have for any lower
    diamond 
    $(\tria^\wedge,\tria_\vee;\tria_1, \dots, \tria_m)$ in $\bbT$, $m\in\setN$, that 
  \begin{align}\label{eq:LDE}
    \energy(\tria^\wedge)-
    \energy(\tria_\vee) &\eqsim \sum_{j=1}^m \big(
    \energy(\tria_j)-
    \energy(\tria_\vee) \big).
  \end{align}
 
  \begin{remark}[Energy]\label{r:energy}
    The definition of $\energy(\tria)$ typically involves the finite
    element solution on $\tria$ and 
    is motivated by some problem specific
    energy. 
    Consider, e.g., the Poisson problem: The solution 
    $u\in H_0^1(\Omega)$ of 
    \begin{align*}
      -\Delta u=
      f~\text{in}~\Omega\qquad\text{and}\qquad u=0~\text{on}~\partial\Omega,
    \end{align*}
    minimises the
    Dirichlet energy
    \begin{align*}
      \mathcal{J}(v):=\int_\Omega\frac12|\nabla v|^2-fv\dx, \quad v\in
      H_0^1(\Omega)
    \end{align*}
over $H^1_0(\Omega)$.
For the standard $P_1$ conforming discretization in 
\cite{DieKreuStev:15}, we have that the finite element spaces are
nested under refinement, i.e., if $\tria_\star\ge\tria$, then
$\mathcal{J}(\tria)\ge \mathcal{J}(\tria_\star)$. Here we used the
abbreviation $\mathcal{J}(\tria):=\mathcal{J}(\uT)$ with $\uT$ being
the Galerkin approximation corresponding to $\tria$.
In order to ensure~\eqref{eq:errbnd} for the standard residual error
estimator, the energy $\energy(\tria)$ is then defined by 
$\mathcal{J}(\tria)$ plus some data dependent term (the element
residual in the scaled $L^2$-norm); compare with 
\cite[\S4]{DieKreuStev:15}.
According to 
\begin{align*}
  \frac12 \int_\Omega|\nabla u -\nabla v|^2\dx =
  \mathcal{J}(v)-\mathcal{J}(u)\qquad\text{for all}~v\in H_0^1(\Omega),
\end{align*}
we have that energy differences correspond to errors and the error 
  bounds in \eqref{eq:errbnda} are known as efficiency and reliability of the 
  error estimator. 

For the definition of the energy $\mathcal{G}$ in the case of
nonconforming Crouzeix-Raviart finite elements 
see Sections~\ref{sec:emin} and~\ref{sec:eminStokes}.
\end{remark}

\begin{remark}\label{rem:est}
  The error indicators are organised by edges for
  the following reasons:
  
  \begin{itemize}[leftmargin=0.5cm]
  \item For element based a posteriori indicators,
    the localised upper bound for the error of two nested discrete
    finite element solutions typically requires slightly
    more elements than the corresponding lower bound. Thus 
    an equivalence property as in~\eqref{eq:errbndb} cannot be strictly
    satisfied; compare e.g. with the localised upper bound in
    \cite{CaKrNoSi:08}, \cite{DieKreuStev:15}, and
    Section~\ref{sec:error-bounds} below.  
   \item Each midpoint of an edge $S\in\sides(\tria)$ can be
    identified with the \emph{new} nodes
    $\nodes(\tria^{++})\setminus\nodes(\tria)$ of the triangulation
    $\tria^{++}$ where all elements of $\tria$ are refined exactly
    twice (or equivalently all sides of $\tria$ are refined exactly
    once).  
    We recall from Remark~\ref{rem:Pop} that the nodes of a
    triangulation can be identified with the corresponding population,
    i.e., the side based indicators $\estT(S)$, $S\in\sides(\tria)$,
    can be interpreted as population based indicators by identifying
    $S$ with
    $\midpoint{S}\in\nodes(\tria^{++})\setminus\nodes(\tria)$.
    This fact is crucial in the instance optimality analysis of~\cite{DieKreuStev:15}.
  \end{itemize}

  \end{remark}

\subsection{An Instance Optimal AFEM}
\label{sec:afem}
In order to formally describe the adaptive method, 
for $\tria \in \bbT$ and $S \in \sides(\tria)$,  let
$$
\refd{\tria}{S}:=\sides(\tria) \setminus \sides(\refine({\tria};{S})),
$$
i.e., $\refd{\tria}{S} $ is
the subset of sides in $\sides(\tria)$ that are bisected in
the smallest refinement of $\tria$ in which
$S$ is bisected.

\begin{algorithm}[AFEM]\label{algo:AFEM}
  Fix $\mu \in (0,1]$ and set 
  $\tria_0:=\tria_\bot$ and $k=0$. The adaptive loop is an iteration
  of the following steps:
 \par{  
  \linespread{1.5}
    \sffamily
   \renewcommand{\arraystretch}{2}
    \begin{tabbing}
      (1) {ESTIMATE:}\quad\= \kill
      (1) {SOLVE}: \>solve the discrete problem  on $\tria_k$;\\ 
      (2) {ESTIMATE}: \> compute $\estTbar[\tria_k]^2
      :=\max\set{\estT[\tria_k]^2(\refd{\tria_k}{S}): S \in
        \sides(\tria_k)}$;
      \\
      (3) {MARK}: \> set $\Mk:=\emptyset$, ${\mathcal
        C}_k:=\sides(\tria_k)$, and $\tildeMk:=\emptyset$;
      \\
      \>while \= ${\mathcal C}_k \neq \emptyset$ do 
      \\
      \>\>select $S \in {\mathcal C}_k$;
      \\
      \> \>if $\estT[\tria_k]^2(\refd{\tria_k}{S} \setminus \tildeMk)
      \geq \mu \estTbar[\tria_k]^2$;
      \\
      \>\> then \=$\Mk:=\Mk \cup \{S\}$;\\
      \>\>\>$\tildeMk:=\tildeMk \cup \refd{\tria_k}{S}$;\\
      \>\> end if;\\
      \>\>${\mathcal C}_k:={\mathcal C}_k \setminus \refd{\tria_k}{S}$;\\
      \>end while;\\
      (4) {REFINE}: \>compute $\tria_{k+1}=\refine({\tria_k};{\Mk})$ and 
      increment $k$.
    \end{tabbing}}
\end{algorithm}

\begin{remark}
We emphasise that 
  the definition of the a posteriori indicators $\estT(S)$,
  $S\in\sides(\tria)$, usually involve the finite
  element solution on $\tria$. The evaluation of the indicators
  requires therefore the solution of the corresponding discrete
  system, which is computed in the step \textsf{SOLVE}. 

  The step \textsf{MARK} is the so called \emph{modified maximum
  marking strategy}. Roughly speaking, our modification of the maximum
  marking strategy replaces the role of the error indicator
  associated with an edge $S\in\sides(\tria_k)$ by the sum of the error indicators over
  the edges $\refd{\tria_k}{S}$, which necessarily 
  have to be bisected together with $S$ in order to retain a conforming triangulation. 
  When more then one edge is marked for refinement the effect of
  previously marked edges has to be subtracted. This is encoded in the
  set $\widetilde{\mathcal{M}}_k$. 
\end{remark}

Under the assumptions in Section~\ref{sec:energy}, it is proved in
\cite{DieKreuStev:15} that this AFEM is instance
optimal.
\begin{theorem}\label{thm:IO}
 Assume that \eqref{eq:emon}, \eqref{eq:errbnd}, and \eqref{eq:LDE}
  are satisfied and
  let $\{\tria_k\}_{k \in \mathbb{N}_0}$ 
  be the sequence of
  triangulations in 2D produced by
  Algorithm~\ref{algo:AFEM}.
  Then, there exists a constant $C\ge 1$, 
  such 
  that 
  \begin{align*}
    \energy(\tria_k)\le \energy(\tria)\qquad\text{for
      all}~\tria\in\bbT~\text{with}\quad C\#(\tria\setminus\tria_\bot)\le \#(\tria_k\setminus\tria_\bot).     
  \end{align*}
  We have $C \Cleq 1/\mu^2$ depending on the constants
    in~\eqref{eq:errbndb} and~\eqref{eq:LDE}.
\end{theorem}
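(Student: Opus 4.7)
The plan is to invoke the abstract argument of \cite{DieKreuStev:15} essentially verbatim, since the three structural hypotheses \eqref{eq:emon}, \eqref{eq:errbnd} and \eqref{eq:LDE} are precisely what that proof consumes; the genuinely new work in this paper happens only in Sections~\ref{sec:poisson} and~\ref{sec:stokes}, where these hypotheses are verified for Crouzeix--Raviart discretisations. Still, let me outline how I would run the argument. Fix a competitor $\tria\in\bbT$ satisfying $C\,\#(\tria\setminus\tria_\bot)\le\#(\tria_k\setminus\tria_\bot)$; aiming at a contradiction I would assume $\energy(\tria_k)>\energy(\tria)$ and show that the AFEM would then need strictly fewer newly produced triangles than $\#(\tria_k\setminus\tria_\bot)$ actually has.

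First I would extract from step \textsf{MARK} a clean stepwise lower bound on the energy descent. The \emph{while}-loop guarantees that every accepted $S\in\Mk$ contributes $\estT[\tria_k]^2(\refd{\tria_k}{S}\setminus\widetilde{\mathcal{M}}_k)\ge \mu\,\estTbar[\tria_k]^2$, while the removal of $\refd{\tria_k}{S}$ from $\mathcal{C}_k$ ensures pairwise disjointness of the batches of edges so charged. Together with the localised equivalence \eqref{eq:errbndb}, this produces
\[
  \energy(\tria_j)-\energy(\tria_{j+1})\;\Cgeq\;\mu\,\#\Mk[j]\,\estTbar[\tria_j]^2\qquad\text{for every }j<k,
\]
and summation combined with \eqref{eq:BDD} converts $\sum_{j<k}\#\Mk[j]$ into a multiple of $\#(\tria_k\setminus\tria_\bot)$. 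This yields a quantitative lower bound on $\energy(\tria_\bot)-\energy(\tria_k)$ in terms of the AFEM's running worst indicator $\estTbar[\tria_j]^2$.

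Second, I would produce a matching upper bound using the competitor $\tria$. Monotonicity \eqref{eq:emon} gives $\energy(\tria_k)-\energy(\tria)\le \energy(\tria_\bot)-\energy(\tria)$, and I would decompose the right-hand side along $\tria$ by building a lower diamond in $\bbT$ whose corners correspond to local refinements tracing the elements of $\tria\setminus\tria_\bot$. Applying \eqref{eq:LDE} and then \eqref{eq:errbnda} to each corner bounds every local contribution by a local indicator on some coarsening of $\tria_k\vee\tria$, and the reformulation of the edge-based indicators as population-based indicators in Remark~\ref{rem:est} identifies each such contribution with at most $\estTbar[\tria_j]^2$ for a suitable $j<k$. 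The number of summands is comparable to $\#(\tria\setminus\tria_\bot)$.

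The main obstacle is the combinatorial middle step: turning the population-tree structure recalled in Remark~\ref{rem:Pop} and the identification between $\midpoint{S}$ and $\nodes(\tria\plus)\setminus\nodes(\tria)$ into a count that matches AFEM-produced triangles against the competitor's triangles with the correct constant. The role of the modified indicator $\estT[\tria_k]^2(\refd{\tria_k}{S})$ in \textsf{MARK} is precisely to make this accounting work, since refining a single element of the competitor may entail a whole $\refd{\tria_k}{S}$ of edge bisections, and the indicator evaluated on such a cluster is what must dominate the competitor's local gain. Once the counts are aligned, comparing the lower and upper bounds above forces $\energy(\tria_k)\le\energy(\tria)$ as soon as $C$ exceeds a multiple of the product of the hidden constants in \eqref{eq:errbndb} and \eqref{eq:LDE} divided by $\mu^2$, giving the asserted dependence $C\Cleq 1/\mu^2$.
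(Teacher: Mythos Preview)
Your proposal matches the paper's own treatment: the paper's proof of this theorem is a single sentence referring to \cite[Theorem~7.3]{DieKreuStev:15}, and the subsequent remark explicitly states that the population--tree arguments from \cite[\S6]{DieKreuStev:15} are \emph{not} reproduced here. Your opening paragraph already makes exactly this move, so as far as the comparison goes you are done.

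The additional outline you supply is a reasonable orientation, but you should be aware that it does not quite trace the actual structure of the argument in \cite{DieKreuStev:15}. In particular, the proof there is not organised as a contradiction that pits a summed energy--descent bound against a lower--diamond decomposition of $\energy(\tria_\bot)-\energy(\tria)$; rather, the population framework is used to construct, for each AFEM step, an intermediate admissible refinement between $\tria_k$ and the competitor, and the lower diamond estimate together with \eqref{eq:errbndb} controls the energy along these intermediates directly. Your use of \eqref{eq:BDD} in the first step also points the inequality in the wrong direction for the comparison you want (it bounds $\#(\tria_k\setminus\tria_\bot)$ \emph{above} by $\sum_j\#\Mk[j]$, not below), and the ``build a lower diamond whose corners trace $\tria\setminus\tria_\bot$'' step is where the genuine combinatorics of populations enters---exactly the part you flag as the main obstacle. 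Since the paper itself declines to reproduce these details, none of this is a defect in your submission; but if you retain the sketch, it would be safer to present it as motivation for the three hypotheses rather than as a roadmap of the proof.
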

\begin{proof}
  For a proof see \cite[Theorem 7.3]{DieKreuStev:15}.
\end{proof}

\begin{remark}
  The proof of instance optimality
  in \cite{DieKreuStev:15} relies on \emph{fine properties} of populations 
  \cite[\S6]{DieKreuStev:15}
  (see Remark~\ref{rem:Pop}).
  These technical arguments are only used to conclude 
  Theorem~\ref{thm:IO}
  from~\eqref{eq:emon},~\eqref{eq:errbnd}
  and~\eqref{eq:LDE}. Therefore, we neglect them and refer the
  interested reader to \cite{DieKreuStev:15}.
  
  A generalisation of populations and, hence, of Theorem~\ref{thm:IO}
 from two to higher dimensions is open.
\end{remark}

\section{Poisson Problem}
\label{sec:poisson}
In this section we show how a standard Crouzeix-Raviart finite
element method for the Poisson problem fits into the
framework of Section~\ref{sec:abstract-setting}. 

For $f\in L^2(\Omega)$,
find the unique weak solution $u\in
H_0^1(\Omega)$ of
\begin{align}\label{eq:Poisson}
  -\Delta u = f\quad\text{in}~\Omega\qquad\text{and}\qquad u=0\quad\text{on}~\partial\Omega.
\end{align}
Here $\Omega\subset \setR^2$ is a bounded polygonal domain. For any
open subset  $\omega\subset\Omega$, we shall denote by $H^1(\omega)$ the 
usual Sobolev space of functions in $L^2(\omega)$ whose first
derivatives are also in $L^2(\omega)$. Denoting the norm on
$L^2(\omega)$ by $\norm[\omega]{\cdot}$, then, thanks to Friedrichs
inequality, we have that $\norm[\Omega]{\nabla\cdot}$ is a norm on the space
$H_0^1(\Omega)$ of functions in $H^1(\Omega)$ with zero trace on
$\partial\Omega$.

\subsection{Crouzeix-Raviart Finite Element Framework}
\label{sec:CR-Framework}
We shall first introduce the Crouzeix-Raviart discretisation
of the Poisson problem and then provide some results, which are crucial 
in order to verify the assumptions needed to apply
  the abstract framework of
Section~\ref{sec:energy}. 

For $\tria\in\bbT$, we denote by $P_k(T)$ the space of polynomials of
degree at most $k$ on $T\in\tria$.  The $P_1$ nonconforming finite element space 
of Crouzeix and Raviart is defined by
\begin{align*}
  \CR:=\big\{\vT\in L^2(\Omega)\colon 
&\vT\vert_T\in P_1(T)~\text{a.e. in}~T\in\tria,\\
&\text{and}~\int_S\jump{\vT}\ds =0~\text{for all}~
S\in\sides(\tria)\big\}. 
\end{align*}
Here and throughout the paper, $\jump{\vT}|_S:=(\vT|_{T_1}-\vT|_{T_2})|_S$ denotes the jump of
$\vT$ across $S$ when $S=T_1\cap T_2\in\sides(\tria)$, and
$\jump{\vT}|_S=\vT|_S$ if
$S\in\sides(\tria)$ and $S\subset\partial\Omega$. Note that on interior edges this definition 
is only unique up to its sign, i.e., up to the choice of $T_1$ and
$T_2$. However, we shall only use it to describe orientation independent
properties. We define the subset of continuous 
functions by $\VoT:=\CR\cap H^1_0(\Omega)$.

The discrete Poincar\'e inequality for piecewise
$H^1$ functions \cite{Brenner:2004,BrennerScott:08}
guarantees that $\int_\Omega \gradnc^\tria u_\tria\cdot \gradnc^\tria v_\tria\dx $,
$u_\tria,v_\tria\in\CR$, is a scalar product.
Here the nonconforming or piecewise gradient is defined  by
\begin{align*}
  (\gradnc^\tria \vT)|_T:=\nabla({\vT}|_T)\quad\text{for all}~T\in\tria.
\end{align*}
We have that $\gradnc^{\tria_\star}|_{\CR}=\gradnc^{\tria}|_{\CR}$ when
$\tria_\star\ge \tria$. In situations when there is no danger of
confusion, we shall skip
the dependence on the triangulation and
simply write $\gradnc$. 
Thanks to the Riesz representation theorem, the 
Crouzeix-Raviart approximation $u_\tria\in\CR$ to \eqref{eq:Poisson}
is uniquely defined by 
\begin{align}\label{eq:discreteproblem}
 \int_\Omega \gradnc u_\tria\cdot \gradnc v_\tria\dx
   = \int_\Omega f v_\tria\dx
   \quad\text{for all }v_\tria\in\CR.
\end{align}
Since 
\begin{align}\label{eq:conv}
  \inf_{\tria\in\bbT,\#\tria\le N}\norm{\gradnc (u-\uT)}\to 0 \quad\text{as}~N\to\infty
\end{align}
(compare e.g.\ with~\cite{Gudi:2010}),
we define $\uT[\tria^\top]:=u$.

Next, we 
introduce the nonconforming interpolation operator
\begin{align*}
  \ipol:\operatorname{span}
  \big(\{\CR[\tria_\star]:\tria_\star\ge\tria\}\cup\{H^1_0(\Omega)\}\big)\to\CR
\end{align*}
defined by
\begin{align}\label{def:ipol}
 \int_S\ipol v\ds :=  \int_S v\ds\qquad\text{for all}~S\in\sides(\tria).
\end{align}
Note that the right-hand side of ~\eqref{def:ipol} is always well-defined. 
Since $\ipol v$ is piecewise affine, we
may equivalently define $(\ipol v)(\midpoint{S}) =  \frac1{|S|}\int_S v\ds$,
where  $\midpoint{S}$ denotes the barycenter of $S\in\sides(\tria)$. Consequently,
this operator is well defined since it determines exactly the
degrees of freedom of the Crouzeix-Raviart function. It satisfies the
well known projection property \cite[Lemma~3.1]{Agouzal:1994}
\begin{align}\label{eq:mean}
  \gradnc(\ipol v)|_T=\frac1{\abs{T}}\int_T \gradnc v\dx,
\end{align}
which directly implies $\norm{\gradnc(v-\ipol v)}\le \norm{\gradnc(v-\vT)}$ for all $\vT\in\CR$.
Moreover, we have the following approximation and stability properties
\begin{align}\label{eq:stability}
  \frac{1}{\Lambda}\norm[T]{\hT^{-1}(v-\ipol v)}\leq \norm[T]{\gradnc(v-\ipol v)}\leq
  \norm[T]{\gradnc v},
\end{align}
with $0<\Lambda\leq 0.4396$ \cite[Theorem~2.1]{CarstensenGedicke:2014};
this is a consequence of a discrete Friedrichs inequality \cite{BrennerScott:08} 
and a scaling argument. Since $\gradnc\vT$ is piecewise constant on
$\tria$ for $\vT\in\CR$, we obtain as an immediate consequence of~\eqref{eq:mean} that 
\begin{align}\label{eq:mean2}
  \int_\Omega \gradnc \vT\cdot\gradnc v\dx =
  \int_\Omega\gradnc\vT\cdot\gradnc (\ipol v)\dx
\end{align}
for all $v\in H^1_0(\Omega)\cup \CR[\tria_\star]$ with $\tria_\star\geq\tria$.

In order to prove the equivalence in~\eqref{eq:errbndb},
we shall also need some operator $\transop_\tria^{\tria_\star}:\CR\to\CR[\tria_\star]$
for $\tria,\tria_\star\in\bbT$, $\tria\leq\tria_\star$.
Based on local averages, such an operator
was recently
introduced in \cite{CarstensenGallistlSchedensack:2013}. For our purposes, 
we need to slightly modify the construction of this so called
transfer operator in order to avoid contributions on unrefined edges
in the stability estimate of the following Theorem~\ref{t:errbndtransop}. The construction of
$\transop_\tria^{\tria_\star}$ and the proof of
Theorem~\ref{t:errbndtransop} is deferred to the appendix.

\begin{theorem}[transfer operator]\label{t:errbndtransop}
Let $\tria\in\bbT$ and $\tria_\star\in\widehat\bbT$ with $\tria\le\tria_\star$. 
There exists an operator
$\transop_\tria^{\tria_\star}:\CR\to\CR[\tria_\star]$ with 
\begin{multline*}
  \norm{\hT^{-1}(\vT - \transop_\tria^{\tria_\star} \vT)}^2
  + \norm{\gradnc(\vT - \transop_\tria^{\tria_\star} \vT)}^2\\
  \Cleq     \sum_{S\in\sides(\tria)\setminus\sides(\tria_\star)}
      h_S \;\|\jump{\gradnc \vT \cdot\tangente}\|_{S}^2
\end{multline*}
for all $\vT\in\CR$.
Here $\tangente$ denotes a tangential unit vector of
$S$. Consequently, $\jump{\gradnc \uT\cdot\tangente}\vert_S$ denotes the
tangential jump of $\gradnc \uT$ across 
interior sides $S\in\sides(\tria)$ with $S\not\subset\partial\Omega$
and the trace of the tangential derivative of $\uT$ for 
boundary sides $S\subset \partial\Omega$.
\end{theorem}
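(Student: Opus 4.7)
My plan is to define $\transop_\tria^{\tria_\star}$ through its Crouzeix--Raviart degrees of freedom on $\sides(\tria_\star)$, with a construction that acts as the identity at unrefined midpoints, and then to establish the stability bound by localising to the patches around refined sides.

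For the construction, I would set $(\transop_\tria^{\tria_\star} v_\tria)(\midpoint{S}) := v_\tria(\midpoint{S})$ for every unrefined side $S \in \sides(\tria) \cap \sides(\tria_\star)$, so that at these midpoints $\transop_\tria^{\tria_\star} v_\tria$ exactly reproduces $v_\tria$. For every remaining midpoint of $\tria_\star$, lying either on a refined side $S \in \sides(\tria)\setminus\sides(\tria_\star)$ or strictly inside some element $T \in \tria$, I would assign the value by a local average of the relevant traces of $v_\tria$, in the spirit of \cite{CarstensenGallistlSchedensack:2013}. The essential deviation from that construction is precisely the exact reproduction at unrefined midpoints: in \cite{CarstensenGallistlSchedensack:2013} an averaging is performed at all midpoints, which would introduce spurious contributions from unrefined sides on the right-hand side of the stability bound.

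For the stability estimate I would argue element by element over $T \in \tria$. If no side of $T$ is refined and $T$ itself is not bisected, then $\transop_\tria^{\tria_\star} v_\tria$ and $v_\tria$ share every midpoint value on $T$ and therefore coincide on $T$; such elements contribute nothing. Any remaining element is adjacent to at least one refined side, and on such an element I would scale to a reference configuration in which the difference $v_\tria - \transop_\tria^{\tria_\star} v_\tria$ becomes a piecewise affine Crouzeix--Raviart function on a finitely parametrised refinement of $T$ vanishing at the midpoints of all sides of $\tria_\star$ that lie on unrefined sides of $\tria$. A finite-dimensional norm equivalence on this reference patch then bounds $\norm[T]{\hT^{-1}(v_\tria - \transop_\tria^{\tria_\star} v_\tria)}^2 + \norm[T]{\gradnc(v_\tria - \transop_\tria^{\tria_\star} v_\tria)}^2$ by the squared differences of those degrees of freedom that actually change; by the choice of averages these are, up to geometric constants, the tangential jumps $\jump{\gradnc v_\tria \cdot \tangente}|_S$ weighted by $h_S$ over the refined sides $S \subset \partial T$. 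Summing over all $T \in \tria$ with bounded overlap yields the claim.

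The main obstacle I anticipate is the reference patch estimate: one has to enumerate the possible NVB refinement configurations of a single element of $\tria$ (which are finitely many up to similarity) and verify in each of them that the chosen averages at the new midpoints produce a function whose distance to $v_\tria$ in the mesh-weighted $L^2$- and broken $H^1$-norms is controlled exclusively by the tangential jumps across refined sides, without any leakage from unrefined neighbours. This case-by-case verification is where the modification of the operator from \cite{CarstensenGallistlSchedensack:2013} becomes indispensable and, I expect, constitutes the technical heart of the appendix.
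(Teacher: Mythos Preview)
Your overall architecture is right—identity at unrefined midpoints, averaging elsewhere—but there is a genuine gap in the finiteness claim underpinning your reference-patch argument. A single element $T\in\tria$ may be bisected arbitrarily many times in $\tria_\star$, so the NVB refinement configurations of $T$ inside $\tria_\star$ are \emph{not} finitely many up to similarity, and assigning a value at every midpoint of $\tria_\star$ inside $T$ gives an unbounded number of degrees of freedom to control. The paper resolves this by passing to an intermediate triangulation $\widehat{\tria}\wedge\tria_\star$, where $\widehat{\tria}=\refine(\tria;\sides(\tria))$ is the smallest refinement of $\tria$ in which every side is bisected: then $h_{\hat T}\le h_T\le 4h_{\hat T}$ for $\hat T\subset T$, so only a bounded number of sub-simplices occur per coarse element. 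The operator is defined as a map into $\CR[\widehat{\tria}\wedge\tria_\star]$; the averaging at a new node $z$ is taken over refined-edge-connected components $\mathcal{Z}(z;\hat T)$ of the nodal star, which guarantees that the result is continuous across every side of $\widehat{\tria}\wedge\tria_\star$ that is further refined in $\tria_\star$, and hence lies in $\CR[\tria_\star]$ as well. Without this intermediate mesh your scaling argument cannot get off the ground.

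The stability proof in the paper also differs from your proposed case-enumeration scheme. Rather than a reference-patch norm equivalence, the paper bounds each nodal discrepancy $|v_\tria|_{\hat T}(y)-(\av_\tria^{\tria_\star}v_\tria)|_{\hat T}(y)|$ by a telescoping sum of jumps $\jump{v_\tria}|_S(y)$ along a chain of refined-edge-connected elements in the star of $y$, and then uses that $\jump{v_\tria}|_S$ is affine with vanishing mean on each coarse side $S$, so a one-dimensional Poincar\'e inequality yields $|\jump{v_\tria}|_S(y)|^2\Cleq h_S^{-1}\norm[S]{\jump{v_\tria}}^2\Cleq h_S\norm[S]{\jump{\gradnc v_\tria\cdot\tangente}}^2$. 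This is direct and avoids any case-by-case verification. Your enumeration approach could in principle be carried out on the intermediate triangulation, but it would be more laborious and would still need the $\mathcal{Z}(z;\hat T)$ construction to make the averaging consistent across shared sides.
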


Finally, we introduce a
linear operator $\compop:\CR\to H^1_0(\Omega)$ from
\cite[Proposition~2.3]{CarstensenGallistlSchedensack:2014}. 
We shall use this operator in the proof of the lower diamond
estimate~\eqref{eq:LDE} in Section~\ref{sec:LDE} below. 
The construction is based on an interpolation into the 
space $\VoT\subset H_0^1(\Omega)$ of piecewise affine continuous
functions. Additional piecewise
continuous quadratics are then used to
guarantee some extra local mean property for the gradient.

\begin{lemma}\label{l:compop}
Let $\tria\in\bbT$. There exists a linear operator $\compop:\CR\to
H^1_0(\Omega)$ satisfying a
local gradient mean property 
\begin{align*}
  \int_T \nabla \compop v_\tria\dx = \int_T \gradnc v_\tria\dx
  \qquad\text{for all }T\in\tria,~\vT\in\CR
\end{align*}
and the approximation and stability property
\begin{align*}
 \norm{h_\tria^{-1}(v_\tria - \compop v_\tria)}
  + \norm{\gradnc(v_\tria - \compop v_\tria)}
  \Cleq \norm{\gradnc v_\tria}.
\end{align*}
\end{lemma}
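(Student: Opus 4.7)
The plan is to construct $\compop$ as a sum $\compop = A_\tria + B_\tria$, where $A_\tria : \CR \to \VoT$ is a standard averaging operator into the conforming $P_1$ space and $B_\tria$ is a small quadratic bubble correction that enforces the local gradient mean identity.

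First I would define $A_\tria$ by node averaging: at every interior vertex $z \in \nodes(\tria)$, set $(A_\tria v_\tria)(z)$ to be the (area-weighted) average of the traces $v_\tria|_T(z)$ over the triangles $T \ni z$, and set $(A_\tria v_\tria)(z):=0$ at every boundary vertex. Standard arguments, combining the jump identity for Crouzeix--Raviart functions ($\int_S \jump{v_\tria}\ds=0$), a broken Poincar\'e/trace inequality, and the fact that $\jump{v_\tria}|_S$ is affine with vanishing mean, give
\begin{align*}
\norm{h_\tria^{-1}(v_\tria - A_\tria v_\tria)}^2 + \norm{\gradnc(v_\tria - A_\tria v_\tria)}^2
\Cleq \sum_{S\in\sides(\tria)} h_S\,\norm[S]{\jump{v_\tria}}^2
\Cleq \norm{\gradnc v_\tria}^2.
\end{align*}

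Next I would correct $A_\tria v_\tria$ edge by edge. For each $S \in \sides(\tria)$ with endpoints $a,b$, let $b_S$ be the quadratic edge bubble $\lambda_a \lambda_b$ (extended by zero). Then $b_S \in H^1_0(\Omega)$, is supported in $\omega_S^\tria$, vanishes on every edge other than $S$, and satisfies $\int_S b_S \ds \eqsim h_S$. Define coefficients
\begin{align*}
\alpha_S := \frac{1}{\int_S b_S\,\mathrm{d}s}\int_S (v_\tria - A_\tria v_\tria)\,\mathrm{d}s,
\qquad B_\tria v_\tria := \sum_{S\in\sides(\tria)}\alpha_S b_S,
\end{align*}
and set $\compop v_\tria := A_\tria v_\tria + B_\tria v_\tria$. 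For $S \subset \partial\Omega$ the $\CR$ property forces $\int_S v_\tria\ds = 0$ and also $A_\tria v_\tria|_S = 0$, so $\alpha_S=0$ there; hence $\compop v_\tria \in H^1_0(\Omega)$.

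For the gradient mean identity, apply $\int_T \nabla w \dx = \int_{\partial T} w\,\normal\,\mathrm{d}s$ to the continuous function $\compop v_\tria$ on each $T\in\tria$. Since $b_S$ vanishes on $\partial T\setminus S$ when $S \subset \partial T$ and on all of $\partial T$ otherwise, and since $\normal$ is constant on each $S$,
\begin{align*}
\int_T \nabla \compop v_\tria\,\mathrm{d}x
= \int_{\partial T} A_\tria v_\tria\,\normal\,\mathrm{d}s + \sum_{S\subset\partial T}\Bigl(\int_S (v_\tria - A_\tria v_\tria)\,\mathrm{d}s\Bigr)\normal_S^T
= \int_{\partial T} v_\tria\,\normal\,\mathrm{d}s = \int_T \gradnc v_\tria\,\mathrm{d}x,
\end{align*}
using that $v_\tria|_T \in P_1(T)$ in the last equality. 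The stability of $B_\tria$ follows from $\abs{\alpha_S}^2 \Cleq h_S^{-1}\norm[S]{v_\tria - A_\tria v_\tria}^2$, a trace inequality $\norm[S]{w}^2 \Cleq h_S^{-1}\norm[\omega_S^\tria]{w}^2 + h_S\norm[\omega_S^\tria]{\gradnc w}^2$ applied to $w=v_\tria - A_\tria v_\tria$, the inverse estimate $\norm{\nabla b_S}_\infty \eqsim h_S^{-1}$, and the finite overlap of the patches $\omega_S^\tria$; combining with the bound on $A_\tria$ yields the stated estimate.

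The main obstacle is the stability step: one needs the averaging bound on $A_\tria$ to be sharp enough that the additional error from the edge-bubble correction can be controlled by $\norm{\gradnc v_\tria}$, and one must carefully track that every edge bubble, including those touching $\partial\Omega$ through a single vertex, contributes to $H^1_0(\Omega)$. Everything else is routine polynomial scaling.
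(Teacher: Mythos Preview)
Your construction is correct and is essentially the same approach as the one the paper defers to: the paper cites \cite[Proposition~2.3]{CarstensenGallistlSchedensack:2014} and describes that construction as ``based on an interpolation into the space $\VoT\subset H_0^1(\Omega)$ of piecewise affine continuous functions'' with ``additional piecewise continuous quadratics \ldots used to guarantee some extra local mean property for the gradient,'' which is precisely your $A_\tria + B_\tria$ decomposition with edge bubbles. The details you supply (boundary handling, trace/inverse estimates, finite overlap) are the standard ones and fill in what the paper leaves to the reference.
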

\begin{proof}
  Compare with Step~2 in the proof of  \cite[Proposition~2.3]{CarstensenGallistlSchedensack:2014}. 
\end{proof}

Let $\tria_\star,\tria\in\bbT$ with $\tria\le\tria_\star$, then 
$\ipol[\tria_\star]\circ \compop:\CR\to\CR[\tria_\star]$ and this 
composed operator has the following properties.
\begin{lemma}\label{l:comprop}
Let $\tria_\star,\tria\in\bbT$ with $\tria\le\tria_\star$. Then for
$\vT\in\CR$ the
operator $\ipol[\tria_\star]\circ \compop:\CR\to\CR[\tria_\star]$ satisfies
the local gradient mean property
\begin{align*}
 \int_T \gradnc (\ipol[\tria_\star]\circ \compop) v_\tria \dx
= \int_T \gradnc v_\tria\dx\qquad\text{for all}~T\in\tria,
\end{align*}
the conservation property
\begin{align*}
 (\ipol[\tria_\star]\circ \compop) v_\tria\vert_T = v_\tria\vert_T
 \qquad\text{for all}~T\in\tria\cap\tria_\star,
\end{align*}
and the approximation and stability property
\begin{align*}
  \norm{h_\tria^{-1} (v_\tria - (\ipol[\tria_\star]\circ \compop) v_\tria)}
  & + \norm{\gradnc (v_\tria - (\ipol[\tria_\star]\circ \compop)
    v_\tria)}
  \Cleq \norm{\gradnc v_\tria}.
\end{align*}
\end{lemma}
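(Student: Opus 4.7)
The plan is to verify the three claims in turn, combining the gradient mean and approximation identities of Lemma~\ref{l:compop} with the elementwise mean property~\eqref{eq:mean} and the stability estimate~\eqref{eq:stability} of the Crouzeix--Raviart interpolant~$\ipol[\tria_\star]$.

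For the \emph{local gradient mean property}, I would fix $T\in\tria$ and use $\tria_\star\ge\tria$ to write $T$ as a disjoint union of elements $T'\in\tria_\star$ with $T'\subset T$. Applying the elementwise identity \eqref{eq:mean} to $\ipol[\tria_\star]$ on each such $T'$ yields $\gradnc(\ipol[\tria_\star]\compop\vT)|_{T'}=\tfrac{1}{|T'|}\int_{T'}\nabla\compop\vT\dx$, so integrating over $T$ and summing gives $\int_T\gradnc(\ipol[\tria_\star]\compop\vT)\dx=\int_T\nabla\compop\vT\dx$. The gradient mean identity of Lemma~\ref{l:compop} then identifies the right-hand side with $\int_T\gradnc\vT\dx$.

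For the \emph{conservation property} on $T\in\tria\cap\tria_\star$, note that $T$ is itself an element of $\tria_\star$ and its three edges lie in $\sides(\tria)\cap\sides(\tria_\star)$. Since $\ipol[\tria_\star]\compop\vT|_T$ is affine on $T$ and determined there by the three edge mean values of $\compop\vT$, and since $\vT|_T\in\CR$ has edge mean $\vT(\midpoint{S})$ on each edge $S\subset\partial T$, the identity $\ipol[\tria_\star]\compop\vT|_T=\vT|_T$ reduces to the edge-mean preservation $\int_S\compop\vT\ds=\int_S\vT\ds$ for every $S\subset\partial T$. This is the main obstacle, because edge-mean preservation is not stated in Lemma~\ref{l:compop}; it is however a property of the explicit construction of $\compop$ in \cite{CarstensenGallistlSchedensack:2014}, in which the $\VoT$-valued piecewise affine part is designed to match edge means of $\vT$ on edges of $\tria$ and the added piecewise continuous quadratic enrichment has vanishing edge means, so that only the gradient mean is modified. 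Invoking this feature of $\compop$, the conservation property follows.

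For the \emph{approximation and stability estimate}, I would split $\vT-\ipol[\tria_\star]\compop\vT=(\vT-\compop\vT)+(\compop\vT-\ipol[\tria_\star]\compop\vT)$ and estimate each piece separately. The first piece is controlled in both the $\hT^{-1}$-weighted $L^2$ norm and the broken gradient norm by $\norm{\gradnc\vT}$ directly from Lemma~\ref{l:compop}. For the second piece, the inclusion $\tria\le\tria_\star$ yields $\hT\ge\hT[\tria_\star]$ pointwise, hence $\norm{\hT^{-1}(\compop\vT-\ipol[\tria_\star]\compop\vT)}\le \norm{\hT[\tria_\star]^{-1}(\compop\vT-\ipol[\tria_\star]\compop\vT)}$, and applying \eqref{eq:stability} elementwise on $\tria_\star$ bounds this together with the broken gradient contribution by $\norm{\gradnc\compop\vT}$. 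A final triangle inequality together with Lemma~\ref{l:compop} gives $\norm{\gradnc\compop\vT}\le \norm{\gradnc(\vT-\compop\vT)}+\norm{\gradnc\vT}\Cleq\norm{\gradnc\vT}$, closing the estimate.
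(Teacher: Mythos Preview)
Your arguments for the local gradient mean property and for the approximation and stability estimate match the paper's proof essentially line by line: decompose $T\in\tria$ into elements of $\tria_\star$ and use \eqref{eq:mean} together with Lemma~\ref{l:compop}; respectively, split $\vT-\ipol[\tria_\star]\compop\vT$, use $\hT^{-1}\le\hT[\tria_\star]^{-1}$, and apply the stability bounds \eqref{eq:stability} and Lemma~\ref{l:compop}.

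For the conservation property, however, you take a different and unnecessarily fragile route. You reduce the claim to the edge-mean preservation $\int_S\compop\vT\ds=\int_S\vT\ds$ for $S\in\sides(\tria)$ and then try to justify this by appealing to construction details of $\compop$ from \cite{CarstensenGallistlSchedensack:2014} that are \emph{not} stated in Lemma~\ref{l:compop}. Your description of that construction (that the $\VoT$-part matches edge means and the quadratic enrichment has vanishing edge means) is asserted without proof and does not obviously follow from the summary given in the paper. The paper instead argues self-containedly from Lemma~\ref{l:compop}: the gradient mean property applied with $\tria_\star=\tria$ shows that $(\ipol\circ\compop)\vT$ and $\vT$ have the same piecewise constant broken gradient on $\tria$; since $\norm{\gradnc\cdot}$ is a norm on $\CR$ (discrete Friedrichs), this forces $\ipol\circ\compop=\id$ on $\CR$. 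Then, for $T\in\tria\cap\tria_\star$, the definition \eqref{def:ipol} gives $(\ipol[\tria_\star]w)|_T=(\ipol w)|_T$ for any $w\in H_0^1(\Omega)$, so $(\ipol[\tria_\star]\circ\compop)\vT|_T=(\ipol\circ\compop)\vT|_T=\vT|_T$. Note that this argument in fact \emph{proves} the edge-mean preservation you assume, since $\ipol\circ\compop=\id$ on $\CR$ is, by \eqref{def:ipol}, equivalent to $\int_S\compop\vT\ds=\int_S\vT\ds$ for all $S\in\sides(\tria)$; so your needed fact is correct, but the clean derivation is via the gradient mean property rather than by opening up the construction of~$\compop$.
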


\begin{proof}
The local gradient mean property is a direct consequence of  \eqref{eq:mean}
and Lemma~\ref{l:compop}. 
This also implies
$(\ipol\circ\compop) =\id\vert_{\CR}$. 
Thanks to~\eqref{def:ipol}, we have $(\ipol[\tria_\star] v)\vert_T = (\ipol v)\vert_T$ 
for all $T\in\tria\cap\tria_\star$ and $v\in H^1_0(\Omega)$. This proves
the conservation property.
The approximation and stability property is a consequence of 
$ v_\tria - (\ipol[\tria_\star]\circ \compop) v_\tria=
 v_\tria - \compop v_\tria
    + \compop v_\tria - (\ipol[\tria_\star]\circ \compop) v_\tria$, 
a triangle inequality, 
the observation that  $h_\tria^{-1}\leq h_{\tria_\star}^{-1}$, and
the approximation and 
stability properties of $\ipol[\tria_\star]$ and $\compop$
in~\eqref{eq:stability} and 
Lemma~\ref{l:compop}. 
\end{proof}

\subsection{Energy}
\label{sec:emin}
For a fixed $\gamma>2\Lambda^2>0$, with $\Lambda$
from~\eqref{eq:stability}, we 
define the generalised energy $\energy:\widehat\bbT\to\setR$ by
\begin{align*}
\energy(\tria)&:=
 -\int_\Omega \frac12 \abs{ \gradnc \uT}^2- f \uT\dx+\gamma\norm{\hT f}^2
\intertext{when $\tria\in\bbT$ and}
\energy(\tria^\top)&:=-\int_\Omega \frac12\abs{\nabla \uT[\tria^\top]}^2- f \uT[\tria^\top]\dx.
\end{align*}
We emphasise that in the above definition of $\energy$, the
  Dirichlet energy of the Crouzeix-Raviart approximation appears with
  a negative sign. In fact, although the Crouzeix-Raviart finite element solution $\uT\in\CR$
    maximises $\vT\mapsto-\int_\Omega \frac12 \abs{ \gradnc \vT}^2- f \vT\dx$
    in $\CR$, this expression is in general not increasing under
    refinement. This is due to the fact, that 
    Crouzeix-Raviart spaces corresponding to nested triangulations 
    are not nested, i.e., $\tria\le\tria_\star$ does 
    not imply $\CR\subset\CR[\tria_\star]$.
    It was observed in
    \cite[Proof of Theorem 6.5]{CarstensenGallistlSchedensack:2013}, however, 
    that the energy $\energy$ is even monotonically \emph{decreasing}
    under refinement,  thanks to the term $\norm{\hT f}^2$. The
    following results states a refined version of this observation.

\begin{lemma}\label{l:ediff=err}
  Let $\tria\in\bbT$ and $\tria_\star\in\widehat\bbT$ with
  $\tria_\star\ge\tria$, then 
    \begin{align}\label{eq:ediffa}
     \begin{split}
       \energy(\tria)
       -\energy(\tria_\star) \eqsim
       \norm{\gradnc(\uT[\tria_\star]-\uT)}^2+ 
       \norm[\Omega(\tria\setminus\tria_\star)]{\hT
         f}^2.
     \end{split}
    \end{align}
    Moreover, we have
    \begin{align*}
      \energy(\tria) -\energy(\tria^\top)\eqsim
      \norm{\gradnc(u-\uT)}^2+\osc(\tria)^2.
    \end{align*}
  Here the oscillation is defined as 
  \begin{align*}
    \osc(\tria)^2:=\sum_{T\in\tria}\int_T\hT^2\abs{f-f_T}^2\dx
    \quad\text{with}~f_T:=\frac1{\abs{T}}
    \int_Tf\dx. 
  \end{align*}
\end{lemma}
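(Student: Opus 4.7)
The plan is to first derive an exact identity for $\energy(\tria)-\energy(\tria_\star)$ for $\tria_\star\in\widehat\bbT$ with $\tria_\star\ge\tria$, and then bound its pieces by elementary means. Testing~\eqref{eq:discreteproblem} with $\uT$ gives $\norm{\gradnc\uT}^2=\int_\Omega f\uT\dx$, and combining~\eqref{eq:mean2} with the fact that $\ipol\uT[\tria_\star]\in\CR$ is an admissible test function in~\eqref{eq:discreteproblem} yields the cross identity
\[
 \int_\Omega \gradnc\uT\cdot\gradnc\uT[\tria_\star]\dx
 = \int_\Omega\gradnc\uT\cdot\gradnc(\ipol\uT[\tria_\star])\dx
 = \int_\Omega f\,\ipol\uT[\tria_\star]\dx
\]
(for $\tria_\star=\tria^\top$ one uses the $H_0^1$-branch of~\eqref{eq:mean2} and interprets $\hT[\tria^\top]\equiv 0$). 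Expanding $\norm{\gradnc(\uT[\tria_\star]-\uT)}^2$ and substituting leads to
\[
 \energy(\tria)-\energy(\tria_\star)
 = \tfrac12\norm{\gradnc(\uT[\tria_\star]-\uT)}^2
 - \int_\Omega f(\uT[\tria_\star]-\ipol\uT[\tria_\star])\dx
 + \gamma\bigl(\norm{\hT f}^2-\norm{\hT[\tria_\star]f}^2\bigr).
\]

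Next, on any $T\in\tria\cap\tria_\star$ the function $\uT[\tria_\star]|_T$ is affine and hence coincides with its CR interpolant (they share the three edge-midpoint values), so the middle integral is supported in $\Omega(\tria\setminus\tria_\star)$. Cauchy--Schwarz together with the stability bound~\eqref{eq:stability} and the elementwise best-approximation property $\norm[T]{\gradnc(\uT[\tria_\star]-\ipol\uT[\tria_\star])}\le\norm[T]{\gradnc(\uT[\tria_\star]-\uT)}$ (valid since $\uT|_T\in P_1(T)$ and $\gradnc\ipol\uT[\tria_\star]|_T$ realises the $L^2(T)$-projection of the gradient by~\eqref{eq:mean}) bound the middle term by $\Lambda\,\norm[\Omega(\tria\setminus\tria_\star)]{\hT f}\,\norm{\gradnc(\uT[\tria_\star]-\uT)}$. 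The mesh-size reduction~\eqref{eq:meshsize-red} gives $\tfrac12\norm[\Omega(\tria\setminus\tria_\star)]{\hT f}^2 \le \norm{\hT f}^2-\norm{\hT[\tria_\star]f}^2 \le \norm[\Omega(\tria\setminus\tria_\star)]{\hT f}^2$. Applying Young's inequality to the cross term (using $\Lambda ab\le\tfrac14 a^2+\Lambda^2 b^2$ for the lower bound), the hypothesis $\gamma>2\Lambda^2$ keeps both coefficients $\tfrac14$ and $\tfrac\gamma2-\Lambda^2$ strictly positive, which proves~\eqref{eq:ediffa}.

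For the second equivalence I specialise the first to $\tria_\star=\tria^\top$: together with the convention $\tria\setminus\tria^\top:=\tria$, this yields $\energy(\tria)-\energy(\tria^\top)\eqsim\norm{\gradnc(u-\uT)}^2+\norm{\hT f}^2$. The bound $\osc(\tria)^2\le\norm{\hT f}^2$ is immediate and, by $L^2$-orthogonality, $\norm{\hT f}^2=\norm{\hT f_\tria}^2+\osc(\tria)^2$ (with $f_\tria|_T:=f_T$), so it remains to show $\norm{\hT f_\tria}^2\Cleq\norm{\gradnc(u-\uT)}^2+\osc(\tria)^2$. This is the standard volume-residual efficiency: for each $T\in\tria$ test the continuous weak form with $b_Tf_T\in H_0^1(T)\subset H_0^1(\Omega)$, where $b_T$ is the cubic element bubble, and exploit that $\gradnc\uT|_T$ is constant while $b_T=0$ on $\partial T$ to deduce $\int_T\gradnc\uT\cdot\nabla(b_Tf_T)\dx=0$; a Cauchy--Schwarz step with $\int_Tb_T\eqsim|T|$ and the inverse estimate $\norm[T]{\nabla(b_Tf_T)}\Cleq h_T^{-1}|T|^{1/2}|f_T|$, followed by summation over $\tria$, completes the argument. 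The main obstacle is the delicate Young-inequality balancing in the first step: the sharp constant $\Lambda$ from~\eqref{eq:stability} in the cross-term estimate competes with the factor $\tfrac12$ from mesh-size reduction, which is exactly why $\gamma>2\Lambda^2$ is imposed in the definition of $\energy$---any weaker value would destroy the lower bound.
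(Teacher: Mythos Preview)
Your proof is correct and follows essentially the same route as the paper: both derive the identity
\[
\energy(\tria)-\energy(\tria_\star)=\tfrac12\norm{\gradnc(\uT[\tria_\star]-\uT)}^2-\int_\Omega f(\uT[\tria_\star]-\ipol\uT[\tria_\star])\dx+\gamma\bigl(\norm{\hT f}^2-\norm{\hT[\tria_\star]f}^2\bigr),
\]
localise the middle term to $\Omega(\tria\setminus\tria_\star)$, bound it via~\eqref{eq:stability} and the best-approximation property of $\ipol$, apply mesh-size reduction~\eqref{eq:meshsize-red} to the last term, and balance with Young's inequality using $\gamma>2\Lambda^2$; the second claim is in both cases reduced to Verf\"urth's bubble-function efficiency of the element residual. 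The only cosmetic difference is that you obtain the identity by first simplifying $\energy(\tria)=\tfrac12\norm{\gradnc\uT}^2+\gamma\norm{\hT f}^2$ via~\eqref{eq:discreteproblem}, whereas the paper expands the binomial directly.
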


\begin{proof}
  In order to prove the first statement, we observe that 
\begin{align*}
   \frac12\norm{\gradnc(\uT[\tria_\star]-\uT)}^2&
      \\
      &\hspace{-2.5cm}=\frac12\int_\Omega\abs{\gradnc \uT[\tria_\star]}^2 
        -\abs{\gradnc\uT}^2\dx 
    -\int_\Omega\gradnc\uT\cdot\gradnc (\uT[\tria_\star]-\uT)\dx
\end{align*}
The integral mean property \eqref{eq:mean2} and 
the discrete problem \eqref{eq:discreteproblem} imply
\begin{align*}
 -\int_\Omega\gradnc\uT\cdot\gradnc (\uT[\tria_\star]-\uT)\dx
  = -\int_\Omega f(\ipol \uT[\tria_\star]-\uT)\dx.
\end{align*}
Therefore, we have
  \begin{align} \label{eq:binom}
    \begin{split}
      \frac12\norm{\gradnc(\uT[\tria_\star]-\uT)}^2
      &=\int_\Omega \frac12\abs{\gradnc \uT[\tria_\star]}^2- f\uT[\tria_\star]\dx
      -\int_\Omega \frac12\abs{\gradnc \uT}^2-f\uT\dx
      \\
      &\quad+  \int_\Omega f(\uT[\tria_\star]-\ipol \uT[\tria_\star])\dx
      \\
      &= \energy(\tria)-\energy(\tria_\star) 
        -\gamma (\norm[\Omega]{\hT
          f}^2-\norm[\Omega]{\hT[\tria_\star] f}^2)
      \\
      &\quad+  \int_\Omega f(\uT[\tria_\star]-\ipol \uT[\tria_\star])\dx.
    \end{split}
  \end{align} 
Recalling the mesh-size reduction
  property~\eqref{eq:meshsize-red}, we have for the penultimate term
  that
  \begin{align}\label{eq:ediff=errProof3}
    \frac12\norm[\Omega(\tria\setminus\tria_\star)]{\hT
        f}^2 \le\norm[\Omega]{\hT f}^2-\norm[\Omega]{\hT[\tria_\star]
        f}^2\le \norm[\Omega(\tria\setminus\tria_\star)]{\hT
        f}^2.
  \end{align}
  Thanks to the definition of $\ipol$ in~\eqref{def:ipol}, we have that
  $\ipol\uT[\tria_\star]|_T=\uT[\tria_\star]|_T$ for all
  $T\in\tria\cap\tria_\star$.  Therefore, we can bound the last term in \eqref{eq:binom} by
  \begin{align*}
    \Big|\int_\Omega f(\uT[\tria_\star]-\ipol
    \uT[\tria_\star])\dx\Big|
   &=\Big|\int_{\Omega(\tria\setminus\tria_\star)}
    f(\uT[\tria_\star]-\ipol \uT[\tria_\star])\dx\Big|
    \\
    &\le \Lambda^2\norm[\Omega(\tria\setminus\tria_\star)]{\hT
      f}^2+\frac1{4\Lambda^2} \norm{\hT^{-1}(\uT[\tria_\star]-\ipol 
\uT[\tria_\star])}^2.
  \end{align*}
  Here we used  a scaled Young's inequality in the last
  step.
Then \eqref{eq:stability} implies
\begin{equation}\label{eq:ediff=errProof2}
\begin{aligned}
  \Big|\int_\Omega f(\uT[\tria_\star]-\ipol
    \uT[\tria_\star])\dx\Big|
    \le \Lambda^2\norm[\Omega(\tria\setminus\tria_\star)]{\hT f}^2
      +\frac1{4} \norm{\gradnc(\uT[\tria_\star]-\ipol \uT[\tria_\star])}^2.
\end{aligned}
\end{equation}
Combining~\eqref{eq:binom},~\eqref{eq:ediff=errProof3}
and~\eqref{eq:ediff=errProof2} yields
\begin{align}\label{eq:1}
  \begin{split}
    \frac14\norm{\gradnc(\uT[\tria_\star]-\uT)}^2
&+\big(\frac\gamma2-\Lambda^2\big)
   \norm[\Omega(\tria\setminus\tria_\star)]{\hT f}^2
    \\
    &\quad\le \energy(\tria)-\energy(\tria_\star)
    \\
    &\le \frac34
    \norm{\gradnc(\uT[\tria_\star]-\uT)}^2+(\gamma+\Lambda^2)
    \norm[\Omega(\tria\setminus\tria_\star)]{\hT f}^2.
  \end{split}
\end{align}
This proves the first equivalence~\eqref{eq:ediffa} for 
$\gamma>2\Lambda^2$.

  In order to prove the second claim, 
  we observe that the term 
  $\norm{\hT f}^2$ corresponds to the element residual in a scaled
  $L^2$-norm. Therefore,
  we can use Verf\"urth's bubble function technique
  (see e.g. \cite[\S1.4.5]{Verfuerth:13}), 
  in order to obtain the efficiency estimate
  \begin{align*}
   \norm{\hT f}^2 \Cleq \norm{\gradnc(u-\uT)}^2+\osc(\tria)^2,
  \end{align*}
  and the asserted estimate is a consequence of the first claim. 
\end{proof}

\subsection{Error Bounds}
\label{sec:error-bounds}
We define local error indicators on sides of $\tria\in\bbT$ by
\begin{align*}
  \estT^2(S):=\norm[\omega_S^\tria]{\hT f}^2 
    + h_S \norm[S]{\jump{\gradnc \uT \cdot\tangente}}^2,\quad S\in\sides(\tria)
\end{align*}
and set 
\begin{align*}
  \estT^2(\tilde{\sides}):=\sum_{S\in\tilde{\sides}} \estT^2(S)\quad\text{for}~\tilde\sides\subset\sides(\tria).
\end{align*}

We have the following relation between the estimator and some quasi-error.
\begin{theorem}\label{t:errbnd}
For $\tria\in\bbT,$ $\tria_\star\in\widehat\bbT$ with $\tria\leq\tria_\star$, we have
\begin{align*}
 \norm{\gradnc (\uT - u_{\tria_\star})}^2 
   + \norm[\Omega(\tria\setminus\tria_\star)]{\hT f}^2
 \eqsim \estT^2(\sides(\tria)\setminus\sides(\tria_\star)).
\end{align*}
\end{theorem}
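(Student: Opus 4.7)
The plan is to split the equivalence into a reliability estimate ($\Cleq$) and an efficiency estimate ($\Cgeq$). The reliability direction is the genuinely new step, since the non-nestedness $\CR\not\subset\CR[\tria_\star]$ precludes a direct Galerkin-orthogonality argument; we exploit the modified transfer operator of Theorem~\ref{t:errbndtransop} together with the integral mean identity~\eqref{eq:mean2} of $\ipol$. The efficiency direction will follow from the standard Verf\"urth edge-bubble technique for Crouzeix-Raviart elements combined with a patch-locality observation that relates the two data terms.

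For the reliability bound I set $e := u_{\tria_\star} - \uT$ and use the transfer operator to decompose $e = e_\star + \eta$, where $e_\star := u_{\tria_\star} - \transop_\tria^{\tria_\star}\uT \in \CR[\tria_\star]$ and $\eta := \transop_\tria^{\tria_\star}\uT - \uT$. Expanding $\norm{\gradnc e}^2$ as the inner product of $\gradnc e$ with $\gradnc e_\star + \gradnc \eta$ and splitting the $e_\star$-part into contributions against $\gradnc u_{\tria_\star}$ and $\gradnc \uT$, the discrete problem~\eqref{eq:discreteproblem} on $\tria_\star$ takes care of the first piece (since $e_\star \in \CR[\tria_\star]$), and~\eqref{eq:mean2} followed by the discrete problem on $\tria$ handles the second. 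Their combination collapses to $\int_\Omega f(e_\star - \ipol e_\star)\dx$, which by the defining property~\eqref{def:ipol} of $\ipol$ is supported in $\Omega(\tria\setminus\tria_\star)$ and bounded by $\Lambda\norm[\Omega(\tria\setminus\tria_\star)]{\hT f}\cdot\norm{\gradnc e_\star}$ via Cauchy--Schwarz and~\eqref{eq:stability}. The remaining $\int_\Omega\gradnc e\cdot\gradnc\eta\dx$ is controlled by $\norm{\gradnc e}\,\norm{\gradnc\eta}$, while Theorem~\ref{t:errbndtransop} converts $\norm{\gradnc\eta}$ into the desired jump sum over $\sides(\tria)\setminus\sides(\tria_\star)$. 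A Young inequality then absorbs the factors of $\norm{\gradnc e}$ into the left-hand side.

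A patch-locality observation handles the data term and sutures both sides of the equivalence: for $S \in \sides(\tria)\setminus\sides(\tria_\star)$, both triangles of $\omega_S^\tria$ must lie in $\tria\setminus\tria_\star$ -- otherwise $S$ would survive as an edge in $\tria_\star$ -- so $\omega_S^\tria \subset \Omega(\tria\setminus\tria_\star)$; conversely every $T \in \tria\setminus\tria_\star$ is bisected and therefore shares at least one of its edges with $\sides(\tria)\setminus\sides(\tria_\star)$. Shape regularity yields $\sum_{S\in\sides(\tria)\setminus\sides(\tria_\star)} \norm[\omega_S^\tria]{\hT f}^2 \eqsim \norm[\Omega(\tria\setminus\tria_\star)]{\hT f}^2$, which both completes the reliability estimate and provides the efficiency of the data contribution. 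For the jump efficiency I would, for each $S$, test $\jump{\gradnc\uT\cdot\tangente}|_S$ against the classical edge bubble $b_S$ on $\omega_S^\tria$, perform a piecewise $\Curl$ integration by parts (exploiting that $\gradnc\uT$ is piecewise constant), and subtract the vanishing curl integral of a conforming companion of $u_{\tria_\star}$ via Lemma~\ref{l:compop}, leading after standard scaling to $h_S\norm[S]{\jump{\gradnc\uT\cdot\tangente}}^2 \Cleq \norm[\omega_S^\tria]{\gradnc(\uT - u_{\tria_\star})}^2$; summation and the patch inclusion finish the efficiency. The principal technical hurdle is the reliability step, where Theorem~\ref{t:errbndtransop}, the mean identity~\eqref{eq:mean2}, and the absorption via~\eqref{eq:stability} must be carefully orchestrated so that only the data on $\Omega(\tria\setminus\tria_\star)$ and the jumps on $\sides(\tria)\setminus\sides(\tria_\star)$ remain on the right.
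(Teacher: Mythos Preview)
Your reliability argument is correct and uses the same essential ingredients as the paper (the transfer operator of Theorem~\ref{t:errbndtransop}, the identity~\eqref{eq:mean2}, and the observation that $e_\star-\ipol e_\star$ is supported in $\Omega(\tria\setminus\tria_\star)$), but the organisation differs. The paper introduces the $\CR[\tria_\star]$-orthogonal projection $v_{\tria_\star}$ of $\uT$ and splits via the Pythagorean identity
\[
\norm{\gradnc(\uT-u_{\tria_\star})}^2=\norm{\gradnc(\uT-v_{\tria_\star})}^2+\norm{\gradnc(v_{\tria_\star}-u_{\tria_\star})}^2.
\]
The first summand is bounded by $\norm{\gradnc(\uT-\transop_\tria^{\tria_\star}\uT)}^2$ (best approximation) and hence by the jump sum, while the second reduces, exactly as your computation does, to $\int_\Omega f(\ipol w_{\tria_\star}-w_{\tria_\star})\dx$ with $w_{\tria_\star}=v_{\tria_\star}-u_{\tria_\star}$. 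The orthogonal split spares you the Young-inequality absorption; your direct decomposition $e=e_\star+\eta$ is slightly less clean but equally valid.

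Your efficiency argument, however, has a genuine gap. After subtracting the conforming companion $\compop[\tria_\star]u_{\tria_\star}$ and applying Cauchy--Schwarz you obtain a local factor $\norm[\omega_S^\tria]{\gradnc\uT-\nabla\compop[\tria_\star]u_{\tria_\star}}$; passing from this to the claimed $\norm[\omega_S^\tria]{\gradnc(\uT-u_{\tria_\star})}$ would require a \emph{local} estimate for $\norm[\omega_S^\tria]{\gradnc(u_{\tria_\star}-\compop[\tria_\star]u_{\tria_\star})}$. Lemma~\ref{l:compop} provides only the global stability $\norm{\gradnc(u_{\tria_\star}-\compop[\tria_\star]u_{\tria_\star})}\Cleq\norm{\gradnc u_{\tria_\star}}$, which is useless for efficiency. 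The paper avoids the companion altogether: it tests with the piecewise affine hat function $\phi_S$ at $\midpoint S$ (not the quadratic bubble) and invokes the orthogonality $\Curl\phi_S\perp_{L^2(\Omega)}\gradnc\CR[\tria_\star]$ from \cite{ArnoldFalk:1989}. The point is that $\phi_S$ is continuous and piecewise linear on $\tria_\star$, so $\partial_\tangente\phi_S$ is edgewise constant and the mean-zero jump condition of Crouzeix--Raviart functions kills every edge contribution in the piecewise integration by parts. One may therefore subtract $u_{\tria_\star}\in\CR[\tria_\star]$ \emph{directly}, giving the sharp local bound $h_S^{1/2}\norm[S]{\jump{\gradnc\uT\cdot\tangente}}\Cleq\norm[\omega_S^\tria]{\gradnc(\uT-u_{\tria_\star})}$ without any detour through a conforming companion. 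With a quadratic bubble this orthogonality fails, which is why your route stalls.
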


Before we turn to prove Theorem~\ref{t:errbnd}, we observe
that, as a consequence of Theorem~\ref{t:errbnd} and
  Lemma~\ref{l:ediff=err}, the error estimator is equivalent to the energy
difference.
\begin{corollary}\label{c:errbndenergy}
For $\tria\in\bbT$ and $\tria_\star\in\bbThat$ with $\tria\leq\tria_\star$, we have
\begin{align*}
 \energy(\tria)-\energy(\tria_\star) 
 &\eqsim \estT^2(\sides(\tria)\setminus\sides(\tria_\star)).
\end{align*}
\end{corollary}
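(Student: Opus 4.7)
The plan is to split into the two cases $\tria_\star\in\bbT$ and $\tria_\star=\tria^\top$, since both Lemma~\ref{l:ediff=err} and the conventions for $\tria\setminus\tria^\top$, $\sides(\tria)\setminus\sides(\tria^\top)$ are formulated separately.

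For $\tria_\star\in\bbT$, I would simply chain the two results. The first equivalence in Lemma~\ref{l:ediff=err} gives
\begin{equation*}
 \energy(\tria)-\energy(\tria_\star)\eqsim
 \norm{\gradnc(\uT[\tria_\star]-\uT)}^2+\norm[\Omega(\tria\setminus\tria_\star)]{\hT f}^2,
\end{equation*}
and Theorem~\ref{t:errbnd} identifies the right-hand side with $\estT^2(\sides(\tria)\setminus\sides(\tria_\star))$. Nothing else is required here.

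For $\tria_\star=\tria^\top$, the conventions yield $\Omega(\tria\setminus\tria^\top)=\Omega$ and $\sides(\tria)\setminus\sides(\tria^\top)=\sides(\tria)$, and Theorem~\ref{t:errbnd} reads
\begin{equation*}
 \norm{\gradnc(\uT-u)}^2+\norm{\hT f}^2 \eqsim \estT^2(\sides(\tria)).
\end{equation*}
On the other hand, the second part of Lemma~\ref{l:ediff=err} expresses $\energy(\tria)-\energy(\tria^\top)$ in terms of $\norm{\gradnc(u-\uT)}^2+\osc(\tria)^2$. The task therefore reduces to showing
\begin{equation*}
 \norm{\gradnc(u-\uT)}^2+\osc(\tria)^2 \eqsim \norm{\gradnc(u-\uT)}^2+\norm{\hT f}^2.
\end{equation*}
The bound ``$\lesssim$'' is immediate, because $f_T$ is the $L^2$-projection onto constants on $T$, so $\|f-f_T\|_T\le \|f\|_T$, which yields $\osc(\tria)^2\le \norm{\hT f}^2$. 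For ``$\gtrsim$'' I would invoke Verf\"urth's bubble function argument for the element residual (exactly as already used at the end of the proof of Lemma~\ref{l:ediff=err}) to obtain $\norm{\hT f}^2\lesssim \norm{\gradnc(u-\uT)}^2+\osc(\tria)^2$. Combining the two sides finishes the case $\tria_\star=\tria^\top$.

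The only mildly delicate step is the handling of the top element $\tria^\top$: Lemma~\ref{l:ediff=err} is stated in terms of $\osc(\tria)$ while Theorem~\ref{t:errbnd} is stated in terms of $\norm{\hT f}$, so the corollary is not a one-line consequence of chaining equivalences and the oscillation/residual comparison via Verf\"urth's technique has to be inserted. All other ingredients are direct algebraic combinations.
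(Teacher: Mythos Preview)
Your proof is correct and follows the paper's intended approach of chaining Lemma~\ref{l:ediff=err} with Theorem~\ref{t:errbnd}; the paper simply states the corollary as an immediate consequence of these two results without further argument.

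However, your case distinction for $\tria_\star=\tria^\top$ is unnecessary. The first equivalence~\eqref{eq:ediffa} in Lemma~\ref{l:ediff=err} is stated for all $\tria_\star\in\widehat\bbT$, not only $\tria_\star\in\bbT$. With the convention $\Omega(\tria\setminus\tria^\top)=\Omega$, it reads
\[
 \energy(\tria)-\energy(\tria^\top)\eqsim \norm{\gradnc(u-\uT)}^2+\norm{\hT f}^2,
\]
which matches the left-hand side of Theorem~\ref{t:errbnd} (also stated for $\tria_\star\in\widehat\bbT$) directly. Hence a single chaining of the two results covers both cases at once, and you can dispense with the detour through $\osc(\tria)$ and the second invocation of Verf\"urth's argument. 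Your route is not wrong, just longer than needed.
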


\begin{proof}[Proof of Theorem~\ref{t:errbnd}]
The proof is split into two parts. To prove $\Cgeq$, we follow
  \cite[Lemma~5.3]{CarstensenGallistlSchedensack:2014} and assume 
without loss of generality, that $\tria_\star\in\bbT$. The case
$\tria_\star=\tria^\top$ is then a consequence of~\eqref{eq:conv}.
Let $S\in\sides(\tria)\setminus\sides(\tria_\star)$
and let $\phi_S$ be the piecewise affine
continuation of $\phi_S(\midpoint{S})=1$ and 
$\phi_S\vert_{\Omega\setminus\omega_S^\tria}\equiv0$. Note that if
$S\subset\partial\Omega$, then 
$\phi_S|_{S}\not\equiv0$ and thus $\phi_S\not\in\VoT$.
Since $\jump{\gradnc \uT\cdot\tangente}\vert_S$ is constant on $S$,
an integration by parts together with $\int_S\phi_S\ds=\frac12 h_S$ yields
\begin{align*}
 \frac12h_S^{1/2} \norm[S]{\jump{\gradnc \uT \cdot\tangente}}
 &= \abs{\int_S \phi_S \jump{\gradnc \uT\cdot\tangente} \ds}
 = \abs{\int_{\omega_S^\tria} \gradnc \uT\cdot \Curl \phi_S\dx}. 
\end{align*}
Piecewise integration by parts reveals the 
$L^2$ orthogonality $\Curl \phi_S\bot_{L^2(\Omega)} \gradnc\CR[\tria_\star]$ 
\cite[Theorem~4.1]{ArnoldFalk:1989}. This
leads for any $v_{\tria_\star}\in\CR[\tria_\star]$ to 
\begin{align*}
 \abs{\int_{\omega_S^\tria} \gradnc \uT\cdot \Curl \phi_S\dx}
 = \abs{\int_{\omega_S^\tria} \gradnc (\uT-v_{\tria_\star})\cdot \Curl \phi_S\dx}.
\end{align*} 
Combining the previous equalities with 
a Cauchy inequality and the scaling $\norm{\Curl\phi_S}\eqsim 1$
we obtain for $\vT[\tria_\star]=\uT[\tria_\star]$ that
\begin{align*}
  h_S^{1/2} \norm[S]{\jump{\gradnc \uT \cdot\tangente}}
  &\Cleq 
  \norm[\omega_S^\tria]{\gradnc(\uT - u_{\tria_\star})}.
\end{align*}

We turn to prove  $\Cleq$. 
Consider $\tria_\star\in\bbT$ and
let $v_{\tria_\star}
=\argmin\{\norm{\gradnc(\uT - w_{\tria_\star})}
 : w_{\tria_\star}\in\CR[\tria_\star]\}$.
This implies 
\begin{align}\label{eq:orthogonalityargmin}
 \int_\Omega \gradnc(\uT - v_{\tria_\star})\cdot\gradnc w_{\tria_\star}\dx=0
  \qquad\text{for all }w_{\tria_\star}\in\CR[\tria_\star].
\end{align}
Then the Pythagorean theorem reads
\begin{align}\label{eq:phytagoras}
  \norm{\gradnc(\uT - u_{\tria_\star})}^2
  = \norm{\gradnc(\uT - v_{\tria_\star})}^2
    + \norm{\gradnc(v_{\tria_\star} - u_{\tria_\star})}^2.
\end{align}
For the second term on the right-hand side,
we obtain thanks to \eqref{eq:orthogonalityargmin} and the discrete
problem \eqref{eq:discreteproblem} 
for $w_{\tria_\star}:=v_{\tria_\star} - u_{\tria_\star}\in\CR[\tria_\star]$, that
\begin{align}\label{eq:proofdRelPoisson}
  \begin{split}
    \norm{\gradnc(v_{\tria_\star} -
      u_{\tria_\star})}^2
    &= \int_\Omega \gradnc u_{\tria}\cdot \gradnc w_{\tria_\star}\dx -
    \int_\Omega f w_{\tria_\star}\dx
    \\
    &= \int_\Omega f (\ipol w_{\tria_\star} - w_{\tria_\star})\dx.
  \end{split}
\end{align}
Here we used property~\eqref{eq:mean2} of
$\ipol$ in the last step. 
Moreover, we have by \eqref{def:ipol} that
$\ipol w_{\tria_\star}= w_{\tria_\star}$
on $\Omega(\tria\cap\tria_\star)$ and thus 
\begin{align*}
 \int_\Omega f (\ipol w_{\tria_\star} - w_{\tria_\star})\dx
 \Cleq \norm[\Omega(\tria\setminus\tria_\star)]{\hT f}
     \norm{\gradnc w_{\tria_\star}}.
\end{align*}
For the first-term on the right-hand side of \eqref{eq:phytagoras},
we have by the definition of $v_{\tria_\star}$ and
Theorem~\ref{t:errbndtransop} that 
\begin{align*}
 \norm{\gradnc(\uT - v_{\tria_\star})}^2
 &\leq \norm{\gradnc (\uT - \transop_\tria^{\tria_\star} \uT)}^2
 \\
 &\Cleq \sum_{S\in\sides(\tria)\setminus\sides(\tria_\star)}
    h_S \norm[S]{\jump{\gradnc \uT \cdot\tangente}}^2.
\end{align*}
A combination of the above
bounds with \eqref{eq:phytagoras} proves the assertion.

The same arguments apply for $\tria_\star=\tria^\top$.
\end{proof}

\subsection{Lower Diamond Estimates}
\label{sec:LDE}
For the conforming method in \cite{DieKreuStev:15}, the lower diamond
estimate \eqref{eq:LDE} is a consequence of the fact that the finite
element solutions are best approximations together with properties of
a locally defined stable quasi-interpolation operator. For non-conforming Crouzeix-Raviart
solutions of~\eqref{eq:discreteproblem}, however, we have a best approximation
property only in the following sense.

\begin{lemma}[quasi best approximation]\label{l:bestapprox}
For $\tria,\tria_\star\in\bbT$ with
$\tria\leq\tria_\star$, we have 
\begin{align*}
 \norm{\gradnc( u_{\tria_\star} - \uT)}
    \Cleq \min_{v_\tria\in\CR} &\norm{\gradnc(u_{\tria_\star} -v_\tria)}
      +\norm[\Omega(\tria\setminus\tria_\star)]{\hT f}.
\end{align*}
\end{lemma}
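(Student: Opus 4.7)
The plan is to adapt the classical Strang-II strategy for nonconforming methods, making crucial use of the composed operator $\ipol[\tria_\star]\circ\compop$ from Lemma~\ref{l:comprop}, whose local gradient mean and conservation properties are tailor-made for this situation.

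First, I would take $v_\tria\in\CR$ to be the best approximation of $u_{\tria_\star}$ with respect to the broken gradient norm. By Galerkin orthogonality in $\CR$ together with the Pythagorean identity,
\begin{equation*}
\norm{\gradnc(u_{\tria_\star}-\uT)}^2 = \norm{\gradnc(u_{\tria_\star}-v_\tria)}^2 + \norm{\gradnc(v_\tria-\uT)}^2,
\end{equation*}
so the task reduces to bounding $\norm{\gradnc w_\tria}$ for $w_\tria:=v_\tria-\uT\in\CR$ by the right-hand side of the assertion.

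Setting $w_{\tria_\star}:=(\ipol[\tria_\star]\circ\compop)w_\tria\in\CR[\tria_\star]$, the combination of the best-approximation orthogonality, the discrete problem \eqref{eq:discreteproblem} on $\tria$, and the discrete problem on $\tria_\star$ tested against $w_{\tria_\star}$ rewrites
\begin{equation*}
\norm{\gradnc w_\tria}^2 = \int_\Omega f(w_{\tria_\star}-w_\tria)\dx + \int_\Omega \gradnc u_{\tria_\star}\cdot \gradnc(w_\tria-w_{\tria_\star})\dx.
\end{equation*}
The step I expect to be the main obstacle, and the key to closing the argument, is to get rid of the factor $\norm{\gradnc u_{\tria_\star}}$ that a naive Cauchy-Schwarz would produce on the right and to replace it by the best-approximation error. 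This is achieved by observing that both $\gradnc\uT$ and $\gradnc w_\tria$ are piecewise constant on $\tria$ and that by the local gradient mean property of Lemma~\ref{l:comprop} one has $\int_T \gradnc(w_\tria-w_{\tria_\star})\dx =0$ for every $T\in\tria$. This yields the double cancellation
\begin{equation*}
\int_\Omega \gradnc \uT \cdot \gradnc(w_\tria-w_{\tria_\star})\dx = 0 = \int_\Omega \gradnc w_\tria \cdot \gradnc(w_\tria-w_{\tria_\star})\dx,
\end{equation*}
so that $u_{\tria_\star}$ may be replaced by $u_{\tria_\star}-\uT-w_\tria = u_{\tria_\star}-v_\tria$ in the second integral.

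Finally, the conservation property of $\ipol[\tria_\star]\circ\compop$ localises $w_\tria-w_{\tria_\star}$ to $\Omega(\tria\setminus\tria_\star)$. Cauchy-Schwarz applied to both integrals, together with the global stability bound $\norm{\hT^{-1}(w_\tria-w_{\tria_\star})}+\norm{\gradnc(w_\tria-w_{\tria_\star})}\Cleq \norm{\gradnc w_\tria}$ from Lemma~\ref{l:comprop} and $\hT\ge \hT[\tria_\star]$ on the coarsening region, gives
\begin{equation*}
\norm{\gradnc w_\tria}^2 \Cleq \big(\norm[\Omega(\tria\setminus\tria_\star)]{\hT f}+ \norm{\gradnc(u_{\tria_\star}-v_\tria)}\big)\,\norm{\gradnc w_\tria}.
\end{equation*}
Dividing through by $\norm{\gradnc w_\tria}$ and inserting the resulting bound into the Pythagorean identity proves the lemma.
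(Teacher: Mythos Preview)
Your argument is correct and follows essentially the same route as the paper. The paper makes explicit that the best approximation is $v_\tria=\ipol u_{\tria_\star}$ (by~\eqref{eq:mean}) and then splits $u_{\tria_\star}=\ipol u_{\tria_\star}+(u_{\tria_\star}-\ipol u_{\tria_\star})$, whereas your ``double cancellation'' subtracts the piecewise constant $\gradnc(\uT+w_\tria)=\gradnc v_\tria=\gradnc\ipol u_{\tria_\star}$; these are the same manipulation, and the remaining terms are bounded by the same properties of $\ipol[\tria_\star]\circ\compop$ from Lemma~\ref{l:comprop} (the remark about $\hT\ge\hT[\tria_\star]$ is unnecessary since the approximation estimate already involves $\hT$).
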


\begin{proof}
The projection property of $\ipol$ and the Pythagorean theorem
imply
\begin{align*}
 \norm{\gradnc(u_{\tria_\star} - \uT)}^2
  = \norm{\gradnc(u_{\tria_\star} - \ipol u_{\tria_\star})}^2
    + \norm{\gradnc(\ipol u_{\tria_\star} - \uT)}^2.
\end{align*}
Since $\ipol u_{\tria_\star}$ is the best approximation of $u_{\tria_\star}$
with respect to $\norm{\gradnc\cdot}$ in $\CR$, it suffices to estimate the second term on the 
right-hand side. 
Using the abbreviation $\varphi_\tria:=\ipol u_{\tria_\star} - \uT\in\CR$,
we have by \eqref{eq:discreteproblem} and \eqref{eq:mean2} that
\begin{align}\label{eq:eq1inproofbestapprox}
\begin{aligned}
 \norm{\gradnc (\ipol u_{\tria_\star} - \uT) }^2
  &= \int_\Omega \gradnc \uT[\tria_\star] \cdot \gradnc \varphi_\tria\dx
     - \int_\Omega f\, \varphi_\tria\dx
\end{aligned}
\end{align}
and by $(\ipol[\tria_\star]\circ \compop) \varphi_\tria\in\CR[\tria_\star]$
and again \eqref{eq:discreteproblem}
\begin{align}
\begin{aligned}
 \int_\Omega \gradnc \uT[\tria_\star] \cdot \gradnc \varphi_\tria\dx 
  &    - \int_\Omega f\, \varphi_\tria\dx
  =
 \int_\Omega f\, ((\ipol[\tria_\star]\circ \compop) \varphi_\tria
  - \varphi_\tria)\dx
  \\ 
  &   + \int_\Omega \gradnc \ipol u_{\tria_\star}\cdot 
  \gradnc (\varphi_\tria - (\ipol[\tria_\star]\circ
  \compop)\varphi_\tria)\dx
  \\
  &
  + \int_\Omega \gradnc (\ipol u_{\tria_\star} - u_{\tria_\star} ) 
  \cdot \gradnc (\ipol[\tria_\star]\circ \compop)\varphi_\tria\dx .
\end{aligned}
\end{align}
Thanks to the approximation property
and the conservation property in
Lemma~\ref{l:comprop}, we can bound the first term on the 
right-hand side by
\begin{align*}
   \int_\Omega f\, ((\ipol[\tria_\star]\circ \compop) \varphi_\tria - \varphi_\tria)\dx
   \Cleq \norm[\Omega(\tria\setminus\tria_\star)]{\hT f} \norm{\gradnc \varphi_\tria}.
\end{align*}
The gradient mean property of $\ipol[\tria_\star]\circ\compop$ implies
that the second term on the right-hand side 
vanishes.
Finally, for the third term the stability of $\ipol[\tria_\star]\circ\compop$ yields
\begin{multline*}
 \int_\Omega \gradnc (\ipol u_{\tria_\star} - u_{\tria_\star} ) 
           \cdot \gradnc (\ipol[\tria_\star]\circ \compop)\varphi_\tria\dx\\
 \Cleq \norm{\gradnc (\ipol u_{\tria_\star} - u_{\tria_\star} )}
   \norm{\gradnc\varphi_\tria}.
\end{multline*}
Combining these bounds proves the assertion.
\end{proof}

\begin{remark}
Lemma~\ref{l:bestapprox} can be proved similarly by replacing $\ipol[\tria_\star]\circ \compop$
by the transfer operator $\transop_\tria^{\tria_\star}$.
However, the proof of a quasi best approximation property 
for the Stokes equations in Lemma~\ref{l:bestapproxStokes} below requires the additional integral mean property
of $\ipol[\tria_\star]\circ \compop$ from Lemma~\ref{l:comprop}.
\end{remark}

As a consequence of Lemma~\ref{l:bestapprox}, we have
a lower diamond estimate for the energy defined in Section~\ref{sec:emin}.
\begin{theorem}[lower diamond estimate]\label{t:LDEPMPenergy}
The generalised energy $\mathcal{G}$ from Section~\ref{sec:emin}
satisfies the lower diamond estimate~\eqref{eq:LDE}.
\end{theorem}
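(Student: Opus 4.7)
The plan is to reduce the claim by invoking Lemma~\ref{l:ediff=err} on both sides: the pair $\tria^\wedge\le\tria_\vee$ yields $\energy(\tria^\wedge)-\energy(\tria_\vee)\eqsim \norm{\gradnc(\uT[\tria_\vee]-\uT[\tria^\wedge])}^2+\norm[\Omega(\tria^\wedge\setminus\tria_\vee)]{\hT[\tria^\wedge] f}^2$, while each pair $\tria_j\le\tria_\vee$ gives the analogous identity with $\tria_j$ in place of $\tria^\wedge$. Hence it suffices to prove
\begin{align*}
\norm{\gradnc(\uT[\tria_\vee]-\uT[\tria^\wedge])}^2 &+ \norm[\Omega(\tria^\wedge\setminus\tria_\vee)]{\hT[\tria^\wedge] f}^2 \\
&\eqsim \sum_{j=1}^m \Bigl(\norm{\gradnc(\uT[\tria_\vee]-\uT[\tria_j])}^2 + \norm[\Omega(\tria_j\setminus\tria_\vee)]{\hT[\tria_j] f}^2\Bigr).
\end{align*}

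The two oscillation sums agree \emph{exactly}, by a combinatorial feature of lower diamonds: the sets $\Omega(\tria_j\setminus\tria_\vee)$ are pairwise disjoint and cover $\Omega(\tria^\wedge\setminus\tria_\vee)$ up to null sets, and on $\Omega(\tria_j\setminus\tria_\vee)$ every $\tria_i$ with $i\neq j$ already coincides with $\tria_\vee$, which forces the finest common coarsening $\tria^\wedge$ to agree with $\tria_j$ there. In particular $\hT[\tria^\wedge]=\hT[\tria_j]$ on $\Omega(\tria_j\setminus\tria_\vee)$, and the oscillation contributions on both sides match termwise after summation.

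For the gradient part I would argue both directions via the quasi-best-approximation Lemma~\ref{l:bestapprox}. For $\energy(\tria^\wedge)-\energy(\tria_\vee)\Cgeq \sum_j(\energy(\tria_j)-\energy(\tria_\vee))$, apply the lemma to each pair $\tria_j\le\tria_\vee$ with test function $w_j\in\CR[\tria_j]$ defined by $w_j:=\uT[\tria^\wedge]$ on $\Omega(\tria_j\setminus\tria_\vee)$ (where $\tria_j=\tria^\wedge$ locally by the observation above) and $w_j:=\uT[\tria_\vee]$ off that set (where $\tria_j=\tria_\vee$ locally); then $\uT[\tria_\vee]-w_j$ is supported in $\Omega(\tria_j\setminus\tria_\vee)$ and equals $\uT[\tria_\vee]-\uT[\tria^\wedge]$ there, so after squaring and summing the disjointness of the coarsening regions yields $\sum_j\norm{\gradnc(\uT[\tria_\vee]-\uT[\tria_j])}^2\Cleq \norm{\gradnc(\uT[\tria_\vee]-\uT[\tria^\wedge])}^2+\norm[\Omega(\tria^\wedge\setminus\tria_\vee)]{\hT[\tria^\wedge] f}^2$. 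For the opposite direction, symmetrically apply Lemma~\ref{l:bestapprox} to the pair $\tria^\wedge\le\tria_\vee$ with a single $v^\wedge\in\CR[\tria^\wedge]$ obtained by pasting $\uT[\tria_j]$ on each $\Omega(\tria_j\setminus\tria_\vee)$ and $\uT[\tria_\vee]$ on the complement, whence $\norm{\gradnc(\uT[\tria_\vee]-v^\wedge)}^2=\sum_j\norm[\Omega(\tria_j\setminus\tria_\vee)]{\gradnc(\uT[\tria_\vee]-\uT[\tria_j])}^2$ by disjointness.

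The main obstacle will be that the naively patched $w_j$ and $v^\wedge$ are not in general elements of $\CR[\tria_j]$ respectively $\CR[\tria^\wedge]$: the Crouzeix--Raviart solutions on different meshes disagree at the relevant edge midpoints, so the mean-zero jump condition typically fails on $\tria_j$- or $\tria^\wedge$-edges sitting on the interface $\partial\Omega(\tria_j\setminus\tria_\vee)$. I would resolve this by smoothing the patched functions, for instance by projecting via $\ipol[\tria_j]$ (respectively $\ipol[\tria^\wedge]$), which by~\eqref{eq:mean} is the $\gradnc$-best approximation into the corresponding Crouzeix--Raviart space and acts as the identity on triangles unaffected by refinement, or equivalently via the transfer operator $\transop_{\tria^\wedge}^{\tria_j}$ of Theorem~\ref{t:errbndtransop}. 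The resulting defect is controlled by tangential edge jumps of $\gradnc \uT[\tria_\vee]$ on refined edges inside $\Omega(\tria^\wedge\setminus\tria_\vee)$, which by Theorem~\ref{t:errbnd} are absorbed into the error and oscillation bookkeeping already present on both sides of the target equivalence.
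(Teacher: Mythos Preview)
Your overall strategy matches the paper: reduce via Lemma~\ref{l:ediff=err} to the quasi-error, dispose of the oscillation part by the exact combinatorial identity for lower diamonds, and handle the gradient part through the quasi-best-approximation Lemma~\ref{l:bestapprox}. The combinatorial part (disjoint coarsening regions, $\tria^\wedge=\tria_j$ on $\Omega(\tria_j\setminus\tria_\vee)$, hence $\hT[\tria^\wedge]=\hT[\tria_j]$ there) is correct and used in the paper verbatim.

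The difference is in the choice of comparison functions for Lemma~\ref{l:bestapprox}. You patch the \emph{discrete solutions} $u_{\tria^\wedge}$, $u_{\tria_j}$, $u_{\tria_\vee}$ across the coarsening regions and then try to repair membership in the target $\CR$-space. This is where the gap sits: the patched $w_j$ and $v^\wedge$ are merely piecewise affine on $\tria_j$ (resp.\ $\tria^\wedge$) and generally violate the mean-zero jump condition on interface edges. Applying $\ipol[\tria_j]$ to them is not covered by the definition~\eqref{def:ipol} (whose domain requires single-valued edge means), and your alternative via $\transop_{\tria^\wedge}^{\tria_j}$ would produce a defect governed by tangential jumps of $\gradnc u_{\tria^\wedge}$ (not of $\gradnc u_{\tria_\vee}$ as you write). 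Absorbing these jumps via Theorem~\ref{t:errbnd} would reintroduce $\norm{\gradnc(u_{\tria_\vee}-u_{\tria^\wedge})}$ on the left-hand side, so the argument as sketched is circular.

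The paper avoids all of this by choosing, as comparison function in $\CR[\tria^\wedge]$ (resp.\ $\CR[\tria_j]$), the interpolant of the \emph{single} function $u_{\tria_\vee}$, namely $\ipol[\tria^\wedge]u_{\tria_\vee}$ and $\ipol[\tria_j]u_{\tria_\vee}$. These lie in the correct spaces by definition (since $\tria_\vee\ge\tria^\wedge,\tria_j$), no repair is needed, and the local character of $\ipol$ yields the \emph{exact} decomposition
\[
\norm{\gradnc(u_{\tria_\vee}-\ipol[\tria^\wedge]u_{\tria_\vee})}^2
=\sum_{j=1}^m\norm{\gradnc(u_{\tria_\vee}-\ipol[\tria_j]u_{\tria_\vee})}^2,
\]
because $\ipol[\tria^\wedge]u_{\tria_\vee}=\ipol[\tria_j]u_{\tria_\vee}$ on $\Omega(\tria_j\setminus\tria_\vee)$ and both equal $u_{\tria_\vee}$ elsewhere. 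Lemma~\ref{l:bestapprox} then turns each interpolation error into the corresponding discrete error, in both directions, without any interface correction. In short: interpolate $u_{\tria_\vee}$ rather than patching several solutions.
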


\begin{proof}
Let $(\tria^\wedge,\tria_\vee;\tria_1,\dots,\tria_m)$, $m\in\setN$, 
be a lower diamond in $\bbT$. The set
$\tria_\vee\setminus\tria_j$ denotes the elements in $\tria_\vee$, which are coarsened 
in $\tria_j$. By Definition~\ref{d:LD} we have that the corresponding areas
$\Omega(\tria_j\setminus\tria_\vee)=\Omega(\tria_\vee\setminus\tria_j)$
are pairwise disjoint, i.e., the necessary coarsenings from
$\tria_\vee$ to $\tria_j$ for each $j$ take place in different
regions of $\Omega$ and are independent of each other. 
Since  $\tria^\wedge$ is the finest common coarsening and $\tria_\vee$
is the coarsest common refinement of
$\tria_1,\ldots,\tria_m$, we have that $\tria^\wedge$ can be obtained
from $\tria_\vee$ by performing exactly the coarsenings from $\tria_\vee$ to
$\tria_j$ for all $j=1,\ldots,m$. This implies that 
$\bigcup_{j=1}^m (\tria_j\setminus\tria_\vee) 
= \tria^\wedge\setminus\tria_\vee$, where the union is disjoint.

Therefore, we obtain with $\ipol[\tria^\wedge]u_{\tria_\vee} =u_{\tria_\vee}$ on 
$\Omega(\tria^\wedge\cap\tria_\vee)$, that
\begin{align*}
 \norm{\gradnc
       (u_{\tria_\vee} - \ipol[\tria^\wedge] u_{\tria_\vee})}^2
 = \sum_{j=1}^m \norm[\Omega(\tria_j\setminus\tria_\vee)]{\gradnc (u_{\tria_\vee} - 
    \ipol[\tria^\wedge] u_{\tria_\vee})}^2.
\end{align*}
Thanks to $\tria_j\setminus\tria_\vee 
\subset \tria^\wedge$, we have 
$\ipol[\tria^\wedge]\uT[\tria_\vee]=\ipol[\tria_j]\uT[\tria_\vee]$  
on $\Omega(\tria_j\setminus\tria_\vee)$, whence
\begin{align*}
  \norm[\Omega(\tria_j\setminus\tria_\vee)]{\gradnc (u_{\tria_\vee} - 
    \ipol[\tria^\wedge] u_{\tria_\vee})}^2
  =  \norm[\Omega(\tria_j\setminus\tria_\vee)]{\gradnc (u_{\tria_\vee} - 
    \ipol[\tria_j] u_{\tria_\vee})}^2.
\end{align*}
Moreover, since $\tria_j\cap\tria_\vee
\subset \tria_\vee$, it follows that
$\ipol[\tria_j]\uT[\tria_\vee]=\uT[\tria_\vee]$ on
$\Omega(\tria_\vee\cap\tria_j)$.  Combining these observations we
arrive at 
\begin{align}\label{eq:LDEinterpolation}
  \norm{\gradnc
       (u_{\tria_\vee} - \ipol[\tria^\wedge] u_{\tria_\vee})}^2
  = \sum_{j=1}^m \norm{\gradnc (u_{\tria_\vee} - 
    \ipol[\tria_j] u_{\tria_\vee})}^2.
\end{align}
Again, since the union $\bigcup_{j=1}^m (\tria_j\setminus\tria_\vee)
=\tria^\wedge\setminus\tria_\vee$ is disjoint we have
\begin{align}\label{eq:LDEhf}
 \sum_{j=1}^m \norm[\Omega(\tria_j\setminus\tria_\vee)]{h_{\tria_j} f}^2
   = \norm[\Omega(\tria^\wedge\setminus\tria_\vee)]{h_{\tria^\wedge} f}^2.
\end{align}
Therefore,
Lemma~\ref{l:bestapprox} applied to $\tria^\wedge$, implies 
\begin{align}
  \begin{split}
    \norm{\gradnc (u_{\tria_\vee} - u_{\tria^\wedge})}^2
    &+\norm[\Omega(\tria^\wedge\setminus\tria_\vee)]{h_{\tria^\wedge} f}^2
    \\
    &\hspace{-10mm}\eqsim \norm{\gradnc (u_{\tria_\vee} - \ipol[\tria^\wedge]
      u_{\tria_\vee})}^2 +\norm[\Omega(\tria^\wedge\setminus\tria_\vee)]{h_{\tria^\wedge} f}^2
    \\
    &\hspace{-10mm}= \sum_{j=1}^m \Big(\norm{\gradnc (u_{\tria_\vee} -
      \ipol[\tria_j]\uT[\tria_\vee])}^2
    +\norm[\Omega(\tria_j\setminus\tria_\vee)]{h_{\tria_j} f}^2\Big)
    \\
    &\hspace{-10mm}\eqsim \sum_{j=1}^m \Big(\norm{\gradnc (u_{\tria_\vee} -
      \uT[\tria_j])}^2
    +\norm[\Omega(\tria_j\setminus\tria_\vee)]{h_{\tria_j} f}^2\Big).
  \end{split}
\end{align}
The equivalence of the discrete errors and the 
energy difference (Lemma~\ref{l:ediff=err}) yields the assertion.
\end{proof}

\subsection{Instance Optimality of AFEM for the Poisson Problem}
\label{sec:inst-opt-Poisson}
The following result is a direct consequence of Theorem~\ref{thm:IO}
and Lemma~\ref{l:ediff=err}.
\begin{theorem}\label{thm:IO-Poisson}
  Let $\{\tria_k\}_{k \in \mathbb{N}_0}$ 
  be the sequence of
  triangulations 
  produced by Algorithm~\ref{algo:AFEM} with the energy of
  Section~\ref{sec:emin} and the estimator of Section~\ref{sec:error-bounds}. 
  Then there exist constants $C,\tilde C\ge1 $, such 
  that 
  \begin{gather*}
    \norm{\gradnc(u-\uT[\tria_k])}^2+\osc(\tria_k)^2\le \tilde C
    \big(\norm{\gradnc(u-\uT)}^2+\osc(\tria)^2\big)
    \intertext{for all $\tria\in\bbT$ with}~C\#(\tria\setminus\tria_\bot)\le \#(\tria_k\setminus\tria_\bot).     
  \end{gather*}
  We have $C \Cleq 1/\mu^2$ depending on the constants
    in Corollary~\ref{c:errbndenergy} and Theorem~\ref{t:LDEPMPenergy}. The constant $\tilde C$ depends on 
  the constants in Lemma~\ref{l:ediff=err}.
\end{theorem}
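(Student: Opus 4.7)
The plan is to read off the conclusion of the abstract Theorem~\ref{thm:IO} for the generalised energy $\energy$ of Section~\ref{sec:emin} together with the side-based indicators of Section~\ref{sec:error-bounds}, and then transport the resulting energy inequality into the nonconforming total-error inequality via Lemma~\ref{l:ediff=err}. Accordingly there are essentially two tasks: check the three abstract hypotheses~\eqref{eq:emon},~\eqref{eq:errbnd},~\eqref{eq:LDE}, and perform the translation from $\energy$-differences to $\norm{\gradnc(u-\uT)}^2+\osc(\tria)^2$.

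First I verify the hypotheses of Theorem~\ref{thm:IO}. Monotonicity~\eqref{eq:emon} is immediate from the first statement of Lemma~\ref{l:ediff=err} since its right-hand side is manifestly non-negative; the case $\tria_\star=\tria^\top$ is handled via~\eqref{eq:conv}. The two equivalences in~\eqref{eq:errbnd} are exactly what Corollary~\ref{c:errbndenergy} asserts (the global bound~\eqref{eq:errbnda} being the specialisation $\tria_\star=\tria^\top$ of the localised one~\eqref{eq:errbndb}). Finally, the lower diamond estimate~\eqref{eq:LDE} is the content of Theorem~\ref{t:LDEPMPenergy}. With all three assumptions in place, Theorem~\ref{thm:IO} yields a constant $C\ge 1$, with $C\Cleq 1/\mu^2$ depending on the equivalence constants of Corollary~\ref{c:errbndenergy} and Theorem~\ref{t:LDEPMPenergy}, such that $\energy(\tria_k)\le \energy(\tria)$ whenever $C\#(\tria\setminus\tria_\bot)\le\#(\tria_k\setminus\tria_\bot)$.

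To finish, I subtract $\energy(\tria^\top)$ on both sides and apply Lemma~\ref{l:ediff=err} (in the form with $\tria_\star=\tria^\top$) to each side separately. This gives the chain
\[
\norm{\gradnc(u-\uT[\tria_k])}^2+\osc(\tria_k)^2
\Cleq \energy(\tria_k)-\energy(\tria^\top)
\le \energy(\tria)-\energy(\tria^\top)
\Cleq \norm{\gradnc(u-\uT)}^2+\osc(\tria)^2,
\]
so $\tilde C$ may be taken as the product of the upper and lower equivalence constants from Lemma~\ref{l:ediff=err}. I expect no genuine obstacle in the argument: the nontrivial work has already been discharged in Sections~\ref{sec:emin}--\ref{sec:LDE}, where the Crouzeix--Raviart-specific issues (non-nestedness of the discrete spaces, the need for the $\gamma\norm{\hT f}^2$ correction, and the quasi-best-approximation-based lower diamond estimate) were handled, and the present statement is only the bookkeeping that converts the abstract energy conclusion of Theorem~\ref{thm:IO} into a statement about the nonconforming total error.
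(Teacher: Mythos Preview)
Your proposal is correct and follows exactly the paper's own approach: verify the abstract hypotheses~\eqref{eq:emon},~\eqref{eq:errbnd},~\eqref{eq:LDE} via Lemma~\ref{l:ediff=err}, Corollary~\ref{c:errbndenergy}, and Theorem~\ref{t:LDEPMPenergy}, invoke Theorem~\ref{thm:IO}, and then translate the energy inequality into the total-error inequality using Lemma~\ref{l:ediff=err}. The paper's proof is in fact a one-sentence version of precisely this argument.
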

\begin{proof}
  The preceding considerations (Lemma~\ref{l:ediff=err},
Corollary~\ref{c:errbndenergy},
 and Theorem~\ref{t:LDEPMPenergy}) verified the 
assumptions~\eqref{eq:emon}, \eqref{eq:errbnd}, and 
\eqref{eq:LDE} and therefore the conditions of 
  Theorem~\ref{thm:IO} and thus, the assertion follows from Lemma~\ref{l:ediff=err}. 
\end{proof}

\begin{remark}[non-constant diffusion]
Consider the problem $-\operatorname{div}A\nabla u = f$
with invertible piecewise constant (with respect to $\tria_\bot$)
$A\in L^{\infty}(\Omega;\mathbb{R}^{2\times 2}_{\text{sym}})$.
Then the generalised energy 
\begin{align*}
 \energy(\tria):=-\int_\Omega \frac{1}{2} \gradnc \uT\cdot A\gradnc \uT 
     - f \uT \dx 
    + \widetilde{\gamma} \norm{\hT f}^2
\end{align*}
for some $\widetilde{\gamma}$ depending on $A$ and 
$\Lambda$ from~\eqref{eq:stability}
is monotonically decreasing and satisfies the lower diamond 
estimate. 
The error indicator
\begin{align*}
  \estT^2(S):=\norm[\omega_S^\tria]{\hT f}^2 
    + h_S \norm[S]{\jump{\gradnc \uT \cdot\tangente}}^2,\quad S\in\sides(\tria)
\end{align*}
satisfies the two-sided error bounds~\eqref{eq:errbnd}.
The proofs of those properties follow in the same way 
as for the Poisson problem with the additional property 
\begin{align*}
 \norm[T]{\gradnc(\uT[\tria_\star] - \uT)}^{2}
  \eqsim \int_T \gradnc(\uT[\tria_\star] - \uT)\cdot 
             A\gradnc(\uT[\tria_\star] - \uT)\dx.
\end{align*}
Hence, the corresponding adaptive algorithm is instance optimal.

If $A$ is not piecewise constant, \eqref{eq:mean2}
would involve additional oscillation terms.
It is unclear how to handle those terms in the proof 
of instance optimality.
\end{remark}

\section{Stokes Problem}
\label{sec:stokes}
One advantage of a Crouzeix-Raviart discretisation of the Stokes
problem is that the velocity approximation is piecewise
exactly divergence free, whence the discrete problem corresponds to
an energy minimising problem. 
We use this fact in order to show that the discrete setting fits into 
the framework of Section~\ref{sec:abstract-setting} thereby yielding
instance optimality of an AFEM with modified maximum strategy
(Algorithm~\ref{algo:AFEM}). The used techniques are similar
to the techniques in Section~\ref{sec:poisson} for the Poisson problem. 
We therefore sketch the proofs and only present the differences in detail.

Let  $\Omega\subset\setR^2$ be a bounded polygonal domain. 
For $\bff\in L^2(\Omega;\setR^2)$, we seek $\bfu\in H^1_0(\Omega;\mathbb{R}^2)$
and $p\in L^2_0(\Omega):=\{q\in L^2(\Omega):\int_\Omega q\dx =0\}$
such that
\begin{align}\label{eq:stokes}
  \begin{alignedat}{2}
    \int_\Omega \nabla \bfu : \nabla \bfv\dx + \int_\Omega p
    \operatorname{div} \bfv\dx &= \int_\Omega \bff \cdot\bfv\dx &\quad&\text{for all
    }\bfv\in H^1_0(\Omega;\mathbb{R}^2),
    \\
    \int_\Omega q \operatorname{div} \bfu\dx &= 0 &\quad&\text{for all
    }q\in L^2_0(\Omega)
  \end{alignedat}
\end{align}
with the scalar product $A:B:=\sum_{j,k=1}^2 A_{jk} B_{jk}$.
The inf-sup condition 
\begin{align}\label{eq:infsup}
  \norm{q}\Cleq \sup_{\bfv\in H^1_0(\Omega;\mathbb{R}^2)} 
     \frac{\int_\Omega q \operatorname{div} \bfv\dx}{\norm{\nabla \bfv}}
  \qquad\text{ for all }q \in L^2_0(\Omega)
\end{align}
guarantees the existence of a unique solution to \eqref{eq:stokes};
see e.g.~\cite{GiraultRaviart:86,BrezziFortin:91}.

\subsection{Crouzeix-Raviart Finite Element Framework.}
\label{sec:cr-stokes}
For $\tria\in\bbT$ denote the 
discrete velocity space by $\Vcr:=\CR\times\CR$ and the
discrete pressure space by $\QT:=\{q_\tria\in
L^2(\Omega):q\vert_T\in P_0(T),~T\in\tria\}$.
The approximation $\buT\in\Vcr$ and $p_\tria\in
\QT\cap L^2_0(\Omega)$ of \eqref{eq:stokes} is then given by
\begin{align}\label{eq:discreteStokes}
  \begin{split}
    \int_\Omega \gradnc \buT : \gradnc \bvT\dx + \int_\Omega p_\tria
    \divnc \bvT\dx &= \int_\Omega \bff\cdot \bvT\dx,
    \\
    \int_\Omega q_\tria \divnc \buT\dx &= 0
  \end{split}
\end{align}
for all $\bvT\in \Vcr$ and all $q_\tria\in \QT\cap L^2_0(\Omega)$.
The piecewise divergence is defined as 
$\divnc \bvT= \operatorname{tr} \gradnc\bvT \in \QT\cap
L^2_0(\Omega)$ (with the trace operator $\operatorname{tr}(A)=A_{11}+A_{22}$), whence $\divnc \buT=0$
pointwise. The discrete counter part 
\begin{align}\label{eq:dinfsup}
  \norm{q_\tria}\Cleq \sup_{\bvT\in\Vcr} \frac{\int_\Omega q_\tria\divnc \bvT\dx}{\norm{\gradnc \bvT}}
  \quad\text{ for all }q_\tria\in \QT\cap L^2_0(\Omega)
\end{align}
of~\eqref{eq:infsup}
guarantees the unique existence of a solution to
\eqref{eq:discreteStokes}; see~\cite{CrouzeixRaviart:73}.
As in Section~\ref{sec:poisson}, we define  $\buT[\tria^\top]:=\bfu$
and $p_{\tria^\top}:=p$.

The operators $\ipol$, $\transop_\tria^{\tria_\star}$, and $\compop$
can be generalised to two dimensions by a component-wise
application thereby maintaining the properties presented in
Section~\ref{sec:CR-Framework}. 
Since there is no danger of confusion,
we adopt the notation for scalar functions,
e.g.  the estimate~\eqref{eq:stability} in
this context is understood with vector valued functions.

\subsection{Energy}
\label{sec:eminStokes}
Let $\gamma>2\Lambda^2>0$.
Similarly as in Section~\ref{sec:emin}, we define the generalised
energy by 
\begin{align*}
 \energy(\tria):=-\int_\Omega \frac12 \abs{\gradnc \buT}^2 - \bff\cdot \buT\dx
      + \gamma\norm{h_\tria \bff}^2
\end{align*}
when $\tria\in\bbT$ and 
\begin{align*}
 \energy(\tria^\top):=-\int_\Omega \frac12 \abs{\nabla \buT[\tria^\top]}^2 - \bff\cdot\buT[\tria^\top]\dx.
\end{align*}

The following lemma bounds the pressure error by
the error of the velocity plus a suitable volume term;
see also \cite[Remark~3.2]{DariDuranPadra:95}.
This allows to consider the error of the velocity 
as the essential part in the analysis below.
\begin{lemma}\label{l:boundpStokes}
For $\tria\in\bbT$, $\tria_\star\in\bbThat$ with $\tria\leq\tria_\star$ we have
\begin{align*}
 \norm{p_{\tria_\star} - p_{\tria} }
  \Cleq \norm{\gradnc(\buT[\tria_\star] -\buT)}
    + \norm[\Omega(\tria\setminus\tria_\star)]{h_\tria \bff}.
\end{align*}
\end{lemma}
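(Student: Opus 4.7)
The plan is to exploit the discrete inf--sup condition~\eqref{eq:dinfsup} on the finer mesh~$\tria_\star$ (and the continuous~\eqref{eq:infsup} in the case $\tria_\star=\tria^\top$), together with both Stokes systems~\eqref{eq:discreteStokes}, and link them through the vector-valued CR interpolation operator~$\ipol$. Since $\QT\subset\QT[\tria_\star]$, the difference $p_{\tria_\star}-p_\tria$ lives in the discrete pressure space on~$\tria_\star$, so~\eqref{eq:dinfsup} gives
\begin{equation*}
  \norm{p_{\tria_\star}-p_\tria}
  \Cleq \sup_{\bvT[\tria_\star]\in\Vcr[\tria_\star]}
    \frac{\int_\Omega (p_{\tria_\star}-p_\tria)\,\divnc\bvT[\tria_\star]\dx}{\norm{\gradnc\bvT[\tria_\star]}}.
\end{equation*}
The task is thus to estimate the numerator.

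For the $p_{\tria_\star}$-part I would simply test~\eqref{eq:discreteStokes} on $\tria_\star$ with $\bvT[\tria_\star]$ itself. The $p_\tria$-part is the delicate one: I would insert $\ipol\bvT[\tria_\star]\in\Vcr$ as test function in~\eqref{eq:discreteStokes} on~$\tria$. The bridge between the two identities is the mean-value property~\eqref{eq:mean} (applied component-wise). Since $p_\tria$ is $\tria$-piecewise constant and $\divnc=\operatorname{tr}\gradnc$, \eqref{eq:mean} yields $\int_\Omega p_\tria\,\divnc\bvT[\tria_\star]\dx=\int_\Omega p_\tria\,\divnc(\ipol\bvT[\tria_\star])\dx$; and since $\gradnc\buT$ is $\tria$-piecewise constant, \eqref{eq:mean2} yields $\int_\Omega \gradnc\buT:\gradnc\bvT[\tria_\star]\dx=\int_\Omega\gradnc\buT:\gradnc(\ipol\bvT[\tria_\star])\dx$. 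Subtracting the two discrete momentum equations thus collapses to
\begin{equation*}
  \int_\Omega (p_{\tria_\star}-p_\tria)\,\divnc\bvT[\tria_\star]\dx
   =\int_\Omega \bff\cdot(\bvT[\tria_\star]-\ipol\bvT[\tria_\star])\dx
    -\int_\Omega\gradnc(\buT[\tria_\star]-\buT):\gradnc\bvT[\tria_\star]\dx.
\end{equation*}

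The second term is immediately controlled by Cauchy--Schwarz, giving $\norm{\gradnc(\buT[\tria_\star]-\buT)}\,\norm{\gradnc\bvT[\tria_\star]}$. For the first term I would use that $(\bvT[\tria_\star]-\ipol\bvT[\tria_\star])\vert_T=0$ on every $T\in\tria\cap\tria_\star$ (since $\bvT[\tria_\star]$ is affine there and $\ipol$ reproduces such functions locally), so the integral reduces to $\Omega(\tria\setminus\tria_\star)$. A Cauchy--Schwarz inequality followed by the approximation and stability estimate~\eqref{eq:stability} then bounds it by $\norm[\Omega(\tria\setminus\tria_\star)]{\hT\bff}\,\norm{\gradnc\bvT[\tria_\star]}$. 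Dividing by $\norm{\gradnc\bvT[\tria_\star]}$ and taking the supremum finishes the argument.

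For the boundary case $\tria_\star=\tria^\top$ I would repeat the same scheme with the continuous inf--sup~\eqref{eq:infsup} and the continuous Stokes system~\eqref{eq:stokes}, testing with $\bfv\in H^1_0(\Omega;\setR^2)$ and using $\ipol\bfv\in\Vcr$; the mean properties~\eqref{eq:mean},~\eqref{eq:mean2} and the stability~\eqref{eq:stability} are available for $H^1_0$-inputs, and the convention $\Omega(\tria\setminus\tria^\top)=\Omega$ gives the right-hand side. The main obstacle is conceptual rather than technical: one must realise that replacing the test function by its CR interpolation does not alter the pressure and viscous integrals on the coarse mesh, which is precisely what decouples the estimate into a velocity-error part and a data-oscillation part on the coarsened region.
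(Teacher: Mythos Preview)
Your proof is correct and follows essentially the same approach as the paper: apply the discrete inf--sup condition on~$\tria_\star$, test the two Stokes systems with $\bvT[\tria_\star]$ and $\ipol\bvT[\tria_\star]$ respectively, use the mean properties~\eqref{eq:mean}--\eqref{eq:mean2} to obtain the key identity, and conclude by Cauchy--Schwarz, the locality of~$\ipol$ on $\tria\cap\tria_\star$, and~\eqref{eq:stability}. Your treatment of the case $\tria_\star=\tria^\top$ via the continuous inf--sup~\eqref{eq:infsup} is also what the paper intends by ``analogously.''
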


\begin{proof}
Assume that $\tria_\star\in\bbT$. Thanks to the discrete inf-sup condition~\eqref{eq:dinfsup}, there
exists $\bvT[\tria_\star]\in\Vcr[\tria_\star]$ with
$\norm{\gradnc \bvT[\tria_\star]}=1$ and
\begin{align}\label{eq:boundpStokes1}
  \norm{p_{\tria_\star} - p_\tria}
  \Cleq \int_\Omega (p_{\tria_\star} - p_\tria) \divnc \bvT[\tria_\star]\dx.
\end{align}
The gradient mean property \eqref{eq:mean} (compare also with~\eqref{eq:mean2})
and the discrete problem \eqref{eq:discreteStokes}
applied to $\buT[\tria_\star]$ and $p_{\tria_\star}$ with 
test function $\bvT[\tria_\star]$ as well as 
$\uT$ and $p_\tria$ with test function $\ipol \bvT[\tria_\star]$ lead to 
\begin{align*}
  &\int_\Omega (p_{\tria_\star} - p_\tria) \divnc \bvT[\tria_\star]\dx\\
  &\qquad\qquad = \int_\Omega \bff\cdot (\bvT[\tria_\star] - \ipol \bvT[\tria_\star])\dx
   - \int_\Omega \gradnc (\buT[\tria_\star] - \buT)\cdot\gradnc \bvT[\tria_\star]\dx.
\end{align*}
Since $(\ipol \bvT[\tria_\star])\vert_T = \bvT[\tria_\star]\vert_T$
for all $T\in\tria\cap\tria_\star$, the assertion follows from a
Cauchy inequality and the approximation
property \eqref{eq:stability} of $\ipol$.
For $\tria_\star=\tria^\top$, the assertion follows analogously.
\end{proof}

We next relate the energy difference to the error; compare
also with Lemma~\ref{l:ediff=err}.

\begin{lemma}\label{l:ediff=errST}
Let $\tria\in\bbT$ and $\tria_\star\in\widehat\bbT$ with
  $\tria_\star\ge\tria$, then 
    \begin{align*}
     \begin{split}
       \energy(\tria)
       -\energy(\tria_\star) \eqsim
       \norm{\gradnc(\buT[\tria_\star]-\buT)}^2+
       \norm[\Omega(\tria\setminus\tria_\star)]{\hT
         \bff}^2.
     \end{split}
    \end{align*}
    Moreover, we have
    \begin{align*}
      \energy(\tria) -\energy(\tria^\top)\eqsim
      \norm{\gradnc(\bfu-\buT)}^2+\norm{p-p_\tria}^2+\osc(\tria)^2,
    \end{align*}
  where the oscillation is defined as 
  \begin{align*}
    \osc(\tria)^2:=\sum_{T\in\tria}\int_T\hT^2\abs{\bff-\bff_T}^2\dx
    \quad\text{with}~\bff_T:=\frac1{\abs{T}}
    \int_T\bff\dx. 
  \end{align*}

\end{lemma}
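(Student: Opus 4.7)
The plan is to mirror the proof of Lemma~\ref{l:ediff=err} for the Poisson problem; the only genuinely new point is handling the pressure.

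\smallskip
\noindent\textbf{First equivalence.} I would start from the binomial identity
\begin{align*}
  \tfrac12\norm{\gradnc(\buT[\tria_\star] - \buT)}^2
  &= \int_\Omega \tfrac12 \abs{\gradnc \buT[\tria_\star]}^2 - \tfrac12 \abs{\gradnc\buT}^2 \dx
  - \int_\Omega \gradnc \buT : \gradnc(\buT[\tria_\star] - \buT)\dx.
\end{align*}
The crucial observation specific to Stokes is that the componentwise nonconforming interpolation $\ipol \buT[\tria_\star]$ is piecewise divergence free: by the gradient mean property~\eqref{eq:mean},
$$
  \divnc(\ipol\buT[\tria_\star])|_T = \operatorname{tr}\Bigl(\tfrac{1}{|T|}\!\int_T \gradnc\buT[\tria_\star]\dx\Bigr) = \tfrac{1}{|T|}\!\int_T \divnc\buT[\tria_\star]\dx = 0,
$$
since $\divnc\buT[\tria_\star] = 0$ pointwise. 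Therefore $\ipol\buT[\tria_\star]$ is an admissible divergence-free test function in~\eqref{eq:discreteStokes} on $\tria$, and the pressure term $\int_\Omega p_\tria\,\divnc(\ipol\buT[\tria_\star])\dx$ drops out. Using \eqref{eq:mean2} componentwise and testing the discrete Stokes problem on $\tria$ with $\buT$ and $\ipol\buT[\tria_\star]$ leads, exactly as in~\eqref{eq:binom}, to
\begin{align*}
  \tfrac12\norm{\gradnc(\buT[\tria_\star]-\buT)}^2
  &= \energy(\tria)-\energy(\tria_\star)
    - \gamma\bigl(\norm{\hT\bff}^2 - \norm{\hT[\tria_\star]\bff}^2\bigr)\\
  &\quad + \int_\Omega \bff\cdot(\buT[\tria_\star] - \ipol\buT[\tria_\star])\dx.
\end{align*}
From here the remaining arguments are literally those of Lemma~\ref{l:ediff=err}: the mesh-size reduction~\eqref{eq:meshsize-red}, localisation of $\bff\cdot(\buT[\tria_\star]-\ipol\buT[\tria_\star])$ to $\Omega(\tria\setminus\tria_\star)$ via~\eqref{def:ipol}, a scaled Young's inequality, and the vector-valued version of \eqref{eq:stability}. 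The condition $\gamma > 2\Lambda^2$ absorbs the $\norm{\hT\bff}$ contribution on the correct side, yielding the first equivalence.

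\smallskip
\noindent\textbf{Second equivalence.} I would specialise the first statement to $\tria_\star = \tria^\top$ to get
$$
  \energy(\tria) - \energy(\tria^\top) \eqsim \norm{\gradnc(\bfu - \buT)}^2 + \norm{\hT\bff}^2.
$$
For the $\gtrsim$ direction, Lemma~\ref{l:boundpStokes} with $\tria_\star = \tria^\top$ gives $\norm{p-p_\tria}^2 \Cleq \norm{\gradnc(\bfu-\buT)}^2 + \norm{\hT\bff}^2$, and trivially $\osc(\tria)^2 \le \norm{\hT\bff}^2$. For the reverse $\Cleq$ direction, I would apply Verf\"urth's bubble function technique (see \cite[\S1.4.5]{Verfuerth:13}) to the element residual of the Stokes problem, which here reduces to $\bff$ because $\divnc \buT = 0$ and $p_\tria$ is piecewise constant; this gives the efficiency bound
$$
  \norm{\hT\bff}^2 \Cleq \norm{\gradnc(\bfu-\buT)}^2 + \norm{p - p_\tria}^2 + \osc(\tria)^2.
$$
Combining these estimates with the first part closes the equivalence.

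\smallskip
\noindent\textbf{Main obstacle.} The one step that genuinely uses the Stokes structure is the verification that $\ipol\buT[\tria_\star]$ is piecewise divergence free, which is what makes the pressure terms cancel and thereby reduces the proof to a direct vectorial analogue of the Poisson argument. Once this is noted, the rest of the argument is essentially bookkeeping, together with the standard Stokes-tailored bubble-function efficiency estimate.
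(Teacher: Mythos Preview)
Your proof is correct and follows essentially the same approach as the paper: the paper's proof also singles out the identity $\int_T \divnc\ipol \buT[\tria_\star]\dx = \int_T \divnc \buT[\tria_\star]\dx = 0$ as the one new ingredient that makes the first claim a direct generalisation of Lemma~\ref{l:ediff=err}, and then invokes the Stokes bubble-function efficiency estimate together with Lemma~\ref{l:boundpStokes} for the second claim. You have simply spelled out in more detail what the paper summarises in two sentences.
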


\begin{proof}
Since $\int_T \divnc\ipol u_{\tria_\star} \dx =\int_T \divnc u_{\tria_\star}\dx =0$
for all $T\in\tria$, the proof of the first claim 
is a straight forward generalisation of the first statement in
Lemma~\ref{l:ediff=err}.

Verf\"urth's bubble function technique 
leads to the efficiency of the volume residual for the Stokes equations
\begin{align*}
 \norm{h_\tria \bff}^2 
  \Cleq \norm{\gradnc(\bfu-\buT)}^2 + \norm{p-p_\tria}^2 + \osc(\tria)^2;
\end{align*}
see \cite[Theorem 3.2]{DariDuranPadra:95}.
This, Lemma~\ref{l:boundpStokes} and the first claim proves the Lemma.
\end{proof}

\subsection{Error Bounds}
\label{sec:errbndStokes}

For $\tria\in\bbT$ we define 
local error indicators on  sides $S\in\sides(\tria)$ by 
\begin{align*}
 \estT^2(S):=\norm[\omega_S^\tria]{h_\tria \bff}^2 
    + h_S \norm[S]{\jump{\gradnc \buT\tangente}}^2
    \quad\text{and set}\quad \estT^2(\widetilde{\mathcal{S}}) :=\sum_{S\in \widetilde{\mathcal{S}}} \estT^2(S)
\end{align*}
for $\widetilde{\mathcal{S}}\subset\sides(\tria)$.
The following theorem states the equivalence of the error estimator to  
the so-called quasi-error; cf.\ Theorem~\ref{t:errbnd}.
\begin{theorem}\label{t:errbndST}
Any $\tria\in\bbT$, $\tria_\star\in\bbThat$ with $\tria\leq\tria_\star$ satisfy
\begin{align*}
 \norm{\gradnc(\buT-\buT[\tria_\star])}^2 
   + \norm[\Omega(\tria\setminus\tria_\star)]{h_\tria \bff}
  \eqsim \estT^2(\sides(\tria)\setminus\sides(\tria_\star)).
\end{align*}
\end{theorem}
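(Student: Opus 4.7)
My plan is to parallel the proof of Theorem~\ref{t:errbnd}, splitting the equivalence into the two one-sided bounds and reducing to the scalar Poisson argument component-wise whenever possible. For the direction $\Cgeq$ I fix $S\in\sides(\tria)\setminus\sides(\tria_\star)$ and introduce the edge bubble $\phi_S$ with $\phi_S(\midpoint{S})=1$ and $\phi_S\equiv 0$ outside $\omega_S^\tria$. For each velocity component $i\in\{1,2\}$, integration by parts together with $\int_S\phi_S\ds=\tfrac12 h_S$ gives
\begin{align*}
\tfrac12 h_S^{1/2}\norm[S]{\jump{\nabla(\buT)_i\cdot\tangente}}=\Bigl|\int_{\omega_S^\tria}\nabla(\buT)_i\cdot\Curl\phi_S\dx\Bigr|.
\end{align*}
The piecewise $L^2$-orthogonality $\Curl\phi_S\perp\gradnc\CR[\tria_\star]$ (as in the Poisson proof) lets me subtract $(\buT[\tria_\star])_i$ inside the integral, and a Cauchy--Schwarz inequality together with $\norm{\Curl\phi_S}\eqsim 1$ bounds the right-hand side by $\norm[\omega_S^\tria]{\gradnc((\buT)_i-(\buT[\tria_\star])_i)}$. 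Summing over $i$ and $S$ with finite overlap and using that bisection of $S$ forces refinement of both triangles sharing $S$, so that $\omega_S^\tria\subset\Omega(\tria\setminus\tria_\star)$, recovers both the jump and the volume residual contributions.

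For the direction $\Cleq$, let $\bvT[\tria_\star]\in\Vcr[\tria_\star]$ be the $\gradnc$-orthogonal projection of $\buT$ and set $\bwT[\tria_\star]:=\bvT[\tria_\star]-\buT[\tria_\star]$. The Pythagorean identity
\begin{align*}
\norm{\gradnc(\buT-\buT[\tria_\star])}^2=\norm{\gradnc(\buT-\bvT[\tria_\star])}^2+\norm{\gradnc\bwT[\tria_\star]}^2
\end{align*}
splits the error; the first summand is dominated by $\norm{\gradnc(\buT-\transop_\tria^{\tria_\star}\buT)}^2$, hence by the jump contribution of $\estT^2$ through Theorem~\ref{t:errbndtransop} applied component-wise. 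For the second summand I combine the discrete Stokes equation~\eqref{eq:discreteStokes} for $\buT$, tested with $\ipol\bwT[\tria_\star]\in\Vcr$ via the mean property~\eqref{eq:mean2}, with the one for $\buT[\tria_\star]$ tested with $\bwT[\tria_\star]$, and use the Galerkin orthogonality of $\buT-\bvT[\tria_\star]$ in $\Vcr[\tria_\star]$ to arrive at
\begin{align*}
\norm{\gradnc\bwT[\tria_\star]}^2=\int_\Omega\bff\cdot(\ipol\bwT[\tria_\star]-\bwT[\tria_\star])\dx+\int_\Omega(p_{\tria_\star}-p_\tria)\divnc\bwT[\tria_\star]\dx,
\end{align*}
where the contribution $\int_\Omega p_\tria(\divnc\ipol\bwT[\tria_\star]-\divnc\bwT[\tria_\star])\dx$ drops out because $p_\tria$ is piecewise constant on $\tria$ and $\ipol$ preserves elementwise means of the gradient, and therefore of the divergence. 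The first integral is estimated by $\norm[\Omega(\tria\setminus\tria_\star)]{\hT\bff}\norm{\gradnc\bwT[\tria_\star]}$ exactly as in the Poisson proof.

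The remaining pressure integral is the principal new obstacle. To control it I exploit that $\divnc\buT=0$ and $\divnc\buT[\tria_\star]=0$, so $\divnc\bwT[\tria_\star]=\divnc(\bvT[\tria_\star]-\buT)$ and therefore $\norm{\divnc\bwT[\tria_\star]}\leq\norm{\gradnc(\bvT[\tria_\star]-\buT)}$, which the first-summand analysis already controls by the jumps. Lemma~\ref{l:boundpStokes} bounds $\norm{p_{\tria_\star}-p_\tria}$ by $\norm{\gradnc(\buT-\buT[\tria_\star])}+\norm[\Omega(\tria\setminus\tria_\star)]{\hT\bff}$. A Cauchy--Schwarz inequality followed by scaled Young inequalities (applied to both the $\bff$- and the pressure-integral) then absorbs the arising $\norm{\gradnc(\buT-\buT[\tria_\star])}$ contribution into the left-hand side of the Pythagorean identity, yielding the asserted upper bound. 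The case $\tria_\star=\tria^\top$ follows by the same chain of arguments combined with the Stokes analogue of the convergence property underlying the passage $\tria_\star\to\tria^\top$ in Theorem~\ref{t:errbnd}.
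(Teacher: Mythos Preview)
Your proof is correct and follows essentially the same route as the paper's own argument: the $\Cgeq$ direction is the component-wise Poisson argument, and the $\Cleq$ direction uses the Pythagorean split with $\bvT[\tria_\star]$, controls $\norm{\gradnc(\buT-\bvT[\tria_\star])}$ via the transfer operator, derives the identity for $\norm{\gradnc\bwT[\tria_\star]}^2$ with the extra pressure term, and then bounds the pressure integral by combining $\divnc\bwT[\tria_\star]=\divnc(\bvT[\tria_\star]-\buT)$ with Lemma~\ref{l:boundpStokes} and a scaled Young inequality to absorb $\norm{\gradnc(\buT-\buT[\tria_\star])}^2$. The only cosmetic difference is that the paper applies Young directly to the pressure integral rather than Cauchy--Schwarz first, which is immaterial.
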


As in Section~\ref{sec:error-bounds}, we observe the following 
equivalence, which is an immediate consequence of 
Theorem~\ref{t:errbndST} and Lemma~\ref{l:ediff=errST}.

\begin{corollary}\label{c:errbndenergyST}
For $\tria\in\bbT$ and $\tria_\star\in\bbThat$ with $\tria\leq\tria_\star$,
we have 
\begin{align*}
 \energy(\tria)-\energy(\tria_\star)
 \eqsim \estT^2(\sides(\tria)\setminus\sides(\tria_\star)).
\end{align*}
\end{corollary}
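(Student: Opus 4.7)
The plan is essentially to chain together the two equivalences that immediately precede the statement, mirroring the argument used in Corollary~\ref{c:errbndenergy} for the Poisson problem. Since both building blocks are already in place, no new construction is required; the corollary is a bookkeeping step that rewrites the quasi-error in terms of the edge-based indicators.

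First I would invoke Lemma~\ref{l:ediff=errST} to obtain
\begin{equation*}
  \energy(\tria) - \energy(\tria_\star)
  \eqsim \norm{\gradnc(\buT[\tria_\star] - \buT)}^2
  + \norm[\Omega(\tria\setminus\tria_\star)]{\hT \bff}^2,
\end{equation*}
thereby reducing the claim to an equivalence between the quasi-error on the right-hand side and $\estT^2(\sides(\tria)\setminus\sides(\tria_\star))$. Then I would apply Theorem~\ref{t:errbndST}, which states precisely this equivalence (reading the boundary data term in Theorem~\ref{t:errbndST} as squared, so that both sides have the same homogeneity and the assertion matches the one from Lemma~\ref{l:ediff=errST}). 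Combining the two chains of $\eqsim$ yields the result, with constants depending on those from Lemma~\ref{l:ediff=errST} and Theorem~\ref{t:errbndST}.

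There is no substantive obstacle here: the work has already been done in the two preceding results, which in turn rest on the nonconforming interpolation $\ipol$, the transfer operator $\transop_\tria^{\tria_\star}$ from Theorem~\ref{t:errbndtransop}, the composed operator $\ipol[\tria_\star]\circ\compop$, and Verfürth's bubble function technique for the Stokes residual. The only mild care to be taken is that all three cases $\tria_\star\in\bbT$ and $\tria_\star=\tria^\top$ are covered, which is already handled in the two preceding statements. Hence the proof amounts essentially to writing \emph{``this follows by combining Lemma~\ref{l:ediff=errST} and Theorem~\ref{t:errbndST}''}, in complete analogy with Corollary~\ref{c:errbndenergy}.
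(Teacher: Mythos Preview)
Your proposal is correct and matches the paper's own treatment: the corollary is stated as an immediate consequence of Theorem~\ref{t:errbndST} and Lemma~\ref{l:ediff=errST}, with no additional argument given. Your observation about the missing square on the volume term in Theorem~\ref{t:errbndST} is also right---this is a typo, and the intended squared version is what makes the chain of equivalences dimensionally consistent.
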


\begin{proof}[Proof of Theorem~\ref{t:errbndST}]
The claim $\Cgeq$ follows analogously to the
corresponding claim in Theorem~\ref{t:errbnd}.

In order to prove the converse direction $\Cleq$, consider first $\tria_\star\in\bbT$ and define
$\bvT[\tria_\star]:=\argmin\{\norm{\gradnc(\buT - \bwT[\tria_\star])}
 : \bwT[\tria_\star]\in\Vcr[\tria_\star]\}$. As in the proof 
of Theorem~\ref{t:errbnd} it suffices to bound $ \norm{\gradnc(\bvT[\tria_\star] -
      \buT[\tria_\star])}^2$; see also \eqref{eq:phytagoras}.
Note that $\divnc \bvT[\tria_\star]\neq 0$ in general, whence
for $\bwT[\tria_\star]:=\bvT[\tria_\star] -
\buT[\tria_\star]\in\Vcr[\tria_\star]$, we have
\begin{align*}
    \norm{\gradnc(\bvT[\tria_\star] -
      \buT[\tria_\star])}^2
    &= \int_\Omega \gradnc \buT\cdot \gradnc \bwT[\tria_\star]\dx -
    \int_\Omega \bff\cdot\bwT[\tria_\star]-
    p_{\tria_\star}\divnc \bwT[\tria_\star]\dx 
    \\
    &= \int_\Omega \bff\cdot (\ipol \bwT[\tria_\star] -
    \bwT[\tria_\star])\dx+\int_\Omega (p_{\tria_\star}-p_{\tria})\divnc \bwT[\tria_\star]\dx
\end{align*}
instead of~\eqref{eq:proofdRelPoisson}.
Since $\divnc \bwT[\tria_\star]=\divnc \bvT[\tria_\star]
= \divnc (\bvT[\tria_\star] - \buT)$,
a scaled Young inequality with $\varepsilon>0$ yields
\begin{align*}
 \int_\Omega (p_{\tria_\star} - p_\tria)\divnc \bwT[\tria_\star]\dx
  \leq \frac\varepsilon2 \norm{ p_{\tria_\star} - p_\tria}^2
   + \frac1{2\varepsilon}\norm{\gradnc(\bvT[\tria_\star] - \buT)}^2 .
\end{align*}
Bounding the pressure difference on the right-hand side with the help
of Lemma~\ref{l:boundpStokes}, we obtain for 
sufficiently small $\varepsilon>0$, that
\begin{align*}
 &\int_\Omega (p_{\tria_\star} - p_\tria)\divnc \bwT[\tria_\star]\dx\\
  &\qquad\leq \norm{\gradnc(\buT[\tria_\star] - \buT)}^2/2
   + C(\norm[\Omega(\tria\setminus\tria_\star)]{h_\tria \bff}^2
   + \norm{\gradnc(\bvT[\tria_\star] - \buT)}^2)
\end{align*}
for some $C=C(\varepsilon)>0$.
The first term can be absorbed on the left-hand side
of~\eqref{eq:phytagoras} and the assertion follows as in
the proof of Theorem~\ref{t:errbnd}.

For $\tria_\star=\tria^\top$ the claim follows analogously.
\end{proof}

\subsection{Lower Diamond Estimates}
\label{sec:LDEstokes}
In this section, we aim to prove the lower diamond
estimate~\eqref{eq:LDE} for the energy defined in
Section~\ref{sec:eminStokes}. In contrast to the lower diamond
estimate for the Poisson problem in Section~\ref{sec:LDE}, 
 the quasi best approximation property involves
the pressure.
\begin{lemma}[quasi best approximation]\label{l:bestapproxStokes}
Let $\tria,\tria_\star\in\bbT$ with $\tria\leq\tria_\star$,
then
\begin{align*}
 \norm{\gradnc(\buT[\tria_\star] - \buT)}
 &\Cleq \inf_{\bvT\in\Vcr} \norm{\gradnc (\buT[\tria_\star] - \bvT)}
      + \norm[\Omega(\tria\setminus\tria_\star)]{h_\tria \bff}
      \\
      &\quad+ \norm{p_{\tria_\star} - \Pi_\tria p_{\tria_\star}},
\end{align*}
where $\Pi_\tria:L^2(\Omega)\to L^2(\Omega)$ is the $L^2$-projection
onto $\QT$.
\end{lemma}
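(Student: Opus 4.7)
The plan is to mimic the proof of Lemma~\ref{l:bestapprox} closely, using $\ipol[\tria_\star]\circ\compop$ as in the Poisson case, but to carefully introduce the pressure contribution via the $L^2$-projection $\Pi_\tria$. First I would exploit the projection property of $\ipol$ (componentwise), which yields the orthogonality
$$
\int_\Omega \gradnc(\buT[\tria_\star]-\ipol\buT[\tria_\star])\cdot\gradnc\bwT\dx=0\qquad\text{for all }\bwT\in\Vcr,
$$
and thus the Pythagorean decomposition
$\norm{\gradnc(\buT[\tria_\star]-\buT)}^2
=\norm{\gradnc(\buT[\tria_\star]-\ipol\buT[\tria_\star])}^2
+\norm{\gradnc\bvarphi_\tria}^2$
with $\bvarphi_\tria:=\ipol\buT[\tria_\star]-\buT\in\Vcr$. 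The first term is already dominated by $\inf_{\bvT\in\Vcr}\norm{\gradnc(\buT[\tria_\star]-\bvT)}$ since $\ipol\buT[\tria_\star]$ is the best $\CR$-approximation with respect to $\norm{\gradnc\cdot}$, so only $\norm{\gradnc\bvarphi_\tria}$ remains to be estimated.

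The key observation before proceeding is that $\divnc\bvarphi_\tria\equiv 0$. Indeed, the discrete incompressibility gives $\divnc\buT=\divnc\buT[\tria_\star]=0$, and the componentwise gradient mean property \eqref{eq:mean} applied to $\ipol$ yields $\divnc(\ipol\buT[\tria_\star])|_T=|T|^{-1}\int_T\divnc\buT[\tria_\star]\dx=0$. With this, the discrete Stokes problem \eqref{eq:discreteStokes} tested with $\bvarphi_\tria$ and the mean property \eqref{eq:mean2} lead (as in \eqref{eq:eq1inproofbestapprox}) to
$$
\norm{\gradnc\bvarphi_\tria}^2=\int_\Omega \gradnc\buT[\tria_\star]\cdot\gradnc\bvarphi_\tria\dx-\int_\Omega\bff\cdot\bvarphi_\tria\dx,
$$
with no $p_\tria$ left over. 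Next I insert $(\ipol[\tria_\star]\circ\compop)\bvarphi_\tria\in\Vcr[\tria_\star]$ as a test function in the discrete Stokes problem for $(\buT[\tria_\star],p_{\tria_\star})$, producing the Poisson-type splitting
$$
\norm{\gradnc\bvarphi_\tria}^2
=\underbrace{\int_\Omega\gradnc\buT[\tria_\star]\cdot\gradnc\bigl(\bvarphi_\tria-(\ipol[\tria_\star]\circ\compop)\bvarphi_\tria\bigr)\dx}_{(\mathrm I)}
+\underbrace{\int_\Omega\bff\cdot\bigl((\ipol[\tria_\star]\circ\compop)\bvarphi_\tria-\bvarphi_\tria\bigr)\dx}_{(\mathrm{II})}
-\underbrace{\int_\Omega p_{\tria_\star}\,\divnc(\ipol[\tria_\star]\circ\compop)\bvarphi_\tria\dx}_{(\mathrm{III})}.
$$

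Terms $(\mathrm I)$ and $(\mathrm{II})$ are handled exactly as in the proof of Lemma~\ref{l:bestapprox}: splitting $\gradnc\buT[\tria_\star]=\gradnc\ipol\buT[\tria_\star]+\gradnc(\buT[\tria_\star]-\ipol\buT[\tria_\star])$ in $(\mathrm I)$, the piecewise constant part is annihilated by the gradient mean property of $\ipol[\tria_\star]\circ\compop$, and stability of the latter yields $(\mathrm I)\lesssim\norm{\gradnc(\buT[\tria_\star]-\ipol\buT[\tria_\star])}\,\norm{\gradnc\bvarphi_\tria}$; the conservation property localises $(\mathrm{II})$ to $\Omega(\tria\setminus\tria_\star)$, giving $(\mathrm{II})\lesssim\norm[\Omega(\tria\setminus\tria_\star)]{\hT\bff}\,\norm{\gradnc\bvarphi_\tria}$. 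The new ingredient, and the main obstacle, is term $(\mathrm{III})$: here I use once more the gradient mean property,
$$
\int_T\divnc(\ipol[\tria_\star]\circ\compop)\bvarphi_\tria\dx=\int_T\divnc\bvarphi_\tria\dx=0\qquad\text{for all }T\in\tria,
$$
so $\divnc(\ipol[\tria_\star]\circ\compop)\bvarphi_\tria$ is $L^2$-orthogonal to $\QT$. Therefore $p_{\tria_\star}$ may be replaced by $p_{\tria_\star}-\Pi_\tria p_{\tria_\star}$, and Cauchy--Schwarz together with $\norm{\divnc(\ipol[\tria_\star]\circ\compop)\bvarphi_\tria}\le\sqrt{2}\,\norm{\gradnc(\ipol[\tria_\star]\circ\compop)\bvarphi_\tria}\lesssim\norm{\gradnc\bvarphi_\tria}$ yields
$(\mathrm{III})\lesssim\norm{p_{\tria_\star}-\Pi_\tria p_{\tria_\star}}\,\norm{\gradnc\bvarphi_\tria}$. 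Dividing by $\norm{\gradnc\bvarphi_\tria}$ and combining with the Pythagorean decomposition finishes the proof. The subtle point is that using the transfer operator $\transop_\tria^{\tria_\star}$ instead of $\ipol[\tria_\star]\circ\compop$ would not give the crucial local mean property needed to produce the factor $p_{\tria_\star}-\Pi_\tria p_{\tria_\star}$ in $(\mathrm{III})$, which is precisely the reason indicated in the remark after Lemma~\ref{l:bestapprox}.
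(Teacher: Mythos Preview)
Your proof is correct and follows essentially the same approach as the paper: Pythagorean decomposition via $\ipol$, testing the fine Stokes problem with $(\ipol[\tria_\star]\circ\compop)\bvarphi_\tria$, and exploiting the gradient mean property of Lemma~\ref{l:comprop} to replace $p_{\tria_\star}$ by $p_{\tria_\star}-\Pi_\tria p_{\tria_\star}$ in term~$(\mathrm{III})$. Your explicit verification that $\divnc\bvarphi_\tria\equiv 0$ (so that no $p_\tria$ term survives from the coarse equation) is a point the paper leaves implicit when referring back to the Poisson argument; otherwise the two proofs coincide.
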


\begin{proof}
We recall from the proof of Lemma~\ref{l:bestapprox}, that it suffices
to bound the term $\norm{\gradnc (\ipol \buT[\tria_\star] - \buT)
}^2$. To this end note that for $\boldsymbol{\varphi}_\tria:=\ipol
\buT[\tria_\star] - \buT\in\Vcr$ we have $\divnc(\ipol[\tria_\star]\circ \compop)
\boldsymbol{\varphi}_\tria\not\equiv 0$ in general. Therefore, instead of 
\eqref{eq:eq1inproofbestapprox}, we obtain 
\begin{align*}
  \norm{\gradnc (\ipol \buT[\tria_\star] - \buT) }^2
  &= \int_\Omega \bff\cdot ((\ipol[\tria_\star]\circ \compop) \boldsymbol{\varphi}_\tria
  - \boldsymbol{\varphi}_\tria)\dx
  \\ 
  &\quad   + \int_\Omega \gradnc \ipol \buT[\tria_\star]\cdot 
  \gradnc (\boldsymbol{\varphi}_\tria - (\ipol[\tria_\star]\circ
  \compop)\boldsymbol{\varphi}_\tria)\dx
  \\
  &\quad
  + \int_\Omega \gradnc (\ipol \buT[\tria_\star] - \buT[\tria_\star] ) 
  \cdot \gradnc (\ipol[\tria_\star]\circ \compop)\boldsymbol{\varphi}_\tria\dx 
\\
&\quad 
    - \int_\Omega p_{\tria_\star} \divnc (\ipol[\tria_\star]\circ \compop) \boldsymbol{\varphi}_\tria\dx.
\end{align*}
The first three terms on the right-hand side can be estimated as in
the proof of Lemma~\ref{l:bestapprox}. For the last term, we obtain
with the gradient mean property and the stability property of
$\ipol[\tria_\star]\circ \compop$ (see 
Lemma~\ref{l:compop}), that
\begin{align*}
  \int_\Omega p_{\tria_\star} \divnc (\ipol[\tria_\star]\circ \compop) \boldsymbol{\varphi}_\tria\dx
  &= \int_\Omega (p_{\tria_\star} - \Pi_\tria p_{\tria_\star}) 
         \divnc (\ipol[\tria_\star]\circ \compop) \boldsymbol{\varphi}_\tria\dx\\
  &\Cleq \norm{p_{\tria_\star} - \Pi_\tria p_{\tria_\star}} \norm{\gradnc \boldsymbol{\varphi}_\tria}.
\end{align*}
This and the arguments of the proof of Lemma~\ref{l:bestapprox} imply the assertion.
\end{proof}

\begin{theorem}[lower diamond estimate]\label{t:LDE-ST}
The generalised energy $\mathcal{G}$ from \linebreak Section~\ref{sec:eminStokes}
satisfies the lower diamond estimate~\eqref{eq:LDE}.
\end{theorem}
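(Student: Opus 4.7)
The plan is to mirror the proof of Theorem~\ref{t:LDEPMPenergy}, using Lemma~\ref{l:ediff=errST} to translate the energy differences into velocity errors plus local $h_\tria \bff$-terms, and then using Lemma~\ref{l:bestapproxStokes} to convert between discrete velocity errors and Crouzeix--Raviart interpolation errors. The new feature compared with the Poisson case is the pressure contribution $\norm{p_{\tria_\star}-\Pi_\tria p_{\tria_\star}}$ arising in Lemma~\ref{l:bestapproxStokes}; localising it across the diamond is where the main technical work lies.

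More concretely, fix a lower diamond $(\tria^\wedge,\tria_\vee;\tria_1,\dots,\tria_m)$ and apply Lemma~\ref{l:ediff=errST} on each side of~\eqref{eq:LDE}. Together with~\eqref{eq:LDEhf} and the observation that $\tria_j$ and $\tria^\wedge$ agree on $\Omega(\tria_j\setminus\tria_\vee)$, this reduces the claim to showing
\begin{equation*}
  \norm{\gradnc(\buT[\tria_\vee]-\buT[\tria^\wedge])}^2
  \eqsim
  \sum_{j=1}^{m}\norm{\gradnc(\buT[\tria_\vee]-\buT[\tria_j])}^2
\end{equation*}
modulo terms controlled by $\norm[\Omega(\tria^\wedge\setminus\tria_\vee)]{h_{\tria^\wedge}\bff}^2$.

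For the direction $\sum_j\Cleq$, I would apply Lemma~\ref{l:bestapproxStokes} to each pair $\tria_j\le\tria_\vee$ with test function $\ipol[\tria_j]\buT[\tria_\vee]$. Summing over $j$, the interpolation contributions collapse via~\eqref{eq:LDEinterpolation} to $\norm{\gradnc(\buT[\tria_\vee]-\ipol[\tria^\wedge]\buT[\tria_\vee])}^2$, which in turn is bounded by $\norm{\gradnc(\buT[\tria_\vee]-\buT[\tria^\wedge])}^2$ since $\ipol[\tria^\wedge]$ is the best $\gradnc$-approximation in $\Vcr[\tria^\wedge]$. For the converse $\Cgeq$ direction I would instead apply Lemma~\ref{l:bestapproxStokes} to the pair $\tria^\wedge\le\tria_\vee$ with test function $\ipol[\tria^\wedge]\buT[\tria_\vee]$, and again identify $\norm{\gradnc(\buT[\tria_\vee]-\ipol[\tria^\wedge]\buT[\tria_\vee])}^2$ with $\sum_j\norm{\gradnc(\buT[\tria_\vee]-\ipol[\tria_j]\buT[\tria_\vee])}^2 \le \sum_j\norm{\gradnc(\buT[\tria_\vee]-\buT[\tria_j])}^2$ via~\eqref{eq:LDEinterpolation} and the best-approximation property of $\ipol[\tria_j]$.

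The central obstacle in both directions is the pressure term $\norm{p_{\tria_\vee}-\Pi_{\tria^\wedge}p_{\tria_\vee}}$ (respectively $\norm{p_{\tria_\vee}-\Pi_{\tria_j}p_{\tria_\vee}}$). The key observation is that $\Pi_{\tria^\wedge}p_{\tria_\vee}=p_{\tria_\vee}$ on $\Omega(\tria^\wedge\cap\tria_\vee)$, so its deviation is supported on $\Omega(\tria^\wedge\setminus\tria_\vee)=\bigcup_j\Omega(\tria_j\setminus\tria_\vee)$, which is a \emph{disjoint} union by Definition~\ref{d:LD}. Since $\tria_j\setminus\tria_\vee\subset\tria^\wedge$, the meshes $\tria^\wedge$ and $\tria_j$ coincide on $\Omega(\tria_j\setminus\tria_\vee)$ so that $\Pi_{\tria^\wedge}=\Pi_{\tria_j}$ there, yielding
\begin{equation*}
 \norm{p_{\tria_\vee}-\Pi_{\tria^\wedge}p_{\tria_\vee}}^2
 = \sum_{j=1}^{m}\norm[\Omega(\tria_j\setminus\tria_\vee)]{p_{\tria_\vee}-\Pi_{\tria_j}p_{\tria_\vee}}^2.
\end{equation*}
Since $p_{\tria_j}\in\QT[\tria_j]$, the $L^2$-best-approximation property of $\Pi_{\tria_j}$ bounds each summand by $\norm{p_{\tria_\vee}-p_{\tria_j}}^2$, and Lemma~\ref{l:boundpStokes} finally converts these into terms of the form $\norm{\gradnc(\buT[\tria_\vee]-\buT[\tria_j])}^2 + \norm[\Omega(\tria_j\setminus\tria_\vee)]{h_{\tria_j}\bff}^2$, exactly the quantities already present on the right-hand side. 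In the $\Cleq$ direction the resulting $\norm{\gradnc(\buT[\tria_\vee]-\buT[\tria^\wedge])}^2$-term appearing via Lemma~\ref{l:boundpStokes} applied to $(\tria^\wedge,\tria_\vee)$ can, after choosing the constant in Lemma~\ref{l:bestapproxStokes} suitably, be absorbed on the left. Combining these bounds with Lemma~\ref{l:ediff=errST} produces the two-sided estimate~\eqref{eq:LDE}.
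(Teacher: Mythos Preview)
Your plan is correct and follows essentially the same route as the paper: both proofs hinge on the identity
\[
\norm{p_{\tria_\vee}-\Pi_{\tria^\wedge}p_{\tria_\vee}}^2
=\sum_{j=1}^m\norm{p_{\tria_\vee}-\Pi_{\tria_j}p_{\tria_\vee}}^2
\]
obtained from the disjointness of the coarsening regions, combine it with \eqref{eq:LDEinterpolation} and \eqref{eq:LDEhf}, invoke Lemma~\ref{l:bestapproxStokes} to pass between interpolation and discrete velocity errors, and use Lemma~\ref{l:boundpStokes} (after the $L^2$-best-approximation bound $\norm{p_{\tria_\vee}-\Pi_{\tria}p_{\tria_\vee}}\le\norm{p_{\tria_\vee}-p_{\tria}}$) to eliminate the pressure contributions. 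The only wrinkle is your final remark about absorbing a $\norm{\gradnc(\buT[\tria_\vee]-\buT[\tria^\wedge])}^2$-term ``on the left'': in the direction you describe this term already lands on the target side, so no absorption argument is needed.
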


\begin{proof}
Let $(\tria^\wedge,\tria_\vee;\tria_1,\dots,\tria_m)$, $m\in\setN$, 
be a lower diamond in $\bbT$.
We recall the definition of the $L^2$ projection in
Lemma~\ref{l:bestapproxStokes} and observe that $\Pi_{\tria^\wedge}
p_{\tria_\vee}\vert_T = p_{\tria_\vee}\vert_T$ when
$T\in\tria_\vee\cap\tria^\wedge$. Since the union
$\tria^\wedge\setminus\tria_\vee=\cup_{j=1}^m \tria_j\setminus\tria_\vee$
is  disjoint, we have
\begin{align*}
  \norm{p_{\tria_\vee} - \Pi_{\tria^\wedge} p_{\tria_\vee}} ^2
     = \sum_{j=1}^m \norm[\Omega(\tria_j\setminus\tria_\vee)]
     {p_{\tria_\vee} - \Pi_{\tria^\wedge} p_{\tria_\vee}}^2=\sum_{j=1}^m \norm{p_{\tria_\vee} - \Pi_{\tria_j} p_{\tria_\vee}}^2,
\end{align*}
where we have used 
$\Pi_{\tria^\wedge} p_{\tria_\vee}\vert_T = \Pi_{\tria_j} p_{\tria_\vee}\vert_T$
for $T\in\tria^\wedge\cap\tria_j$ in the last step.
This, \eqref{eq:LDEinterpolation} and \eqref{eq:LDEhf}
imply the equality
\begin{align*}
 &\norm{\gradnc(\buT[\tria_\vee] - \ipol[\tria^\wedge] \buT[\tria_\vee])}^2
  + \norm[\Omega(\tria^\wedge\setminus\tria_\vee)]{h_{\tria^\wedge} \bff}^2
  + \norm{p_{\tria_\vee} - \Pi_{\tria^\wedge} p_{\tria_\vee}}^2\\
 &\qquad = \sum_{j=1}^m \Big(\norm{\gradnc(\buT[\tria_\vee] - \ipol[\tria_j] \buT[\tria_\vee])}^2
  + \norm[\Omega(\tria_j\setminus\tria_\vee)]{h_{\tria_j} \bff}^2
  + \norm{p_{\tria_\vee} - \Pi_{\tria_j} p_{\tria_\vee}}^2\Big).
\end{align*}
The quasi best approximation property of Lemma~\ref{l:bestapproxStokes}
implies
\begin{align*}
  &\norm{\gradnc(\buT[\tria_\vee] - \buT[\tria^\wedge])}^2
  + \norm[\Omega(\tria^\wedge\setminus\tria_\vee)]{h_{\tria^\wedge} \bff}^2
  + \norm{p_{\tria_\vee} - \Pi_{\tria^\wedge} p_{\tria_\vee}}^2\\
 &\qquad \eqsim \sum_{j=1}^m \Big(\norm{\gradnc(\buT[\tria_\vee] - \buT[\tria_j])}^2
  + \norm[\Omega(\tria_j\setminus\tria_\vee)]{h_{\tria_j} \bff}^2
  + \norm{p_{\tria_\vee} - \Pi_{\tria_j} p_{\tria_\vee}}^2\Big).
\end{align*}
Lemma~\ref{l:boundpStokes} then yields  
\begin{align*}
 \norm{\gradnc (\buT[\tria_\vee]-\buT[\tria^\wedge])}^2
  &+ \norm[\Omega(\tria^\wedge\setminus\tria_\vee)]{h_{\tria^\wedge} \bff}^2\\
   &\eqsim \sum_{j=1}^m \Big(\norm{ \gradnc(\buT[\tria_\vee] - \buT[\tria_j])}^2
  + \norm[\Omega(\tria_j\setminus\tria_\vee)]{h_{\tria_j} \bff}^2\Big)
\end{align*}
and thus the assertion follows from Lemma~\ref{l:ediff=errST}.
\end{proof}

\subsection{Instance Optimality of AFEM for the Stokes Problem}
\label{sec:inst-opt-Stokes}
The following result is a direct consequence of Theorem~\ref{thm:IO}
and Lemma~\ref{l:ediff=errST}.
\begin{theorem}\label{thm:IO-Stokes}
  Let $\{\tria_k\}_{k \in \mathbb{N}_0}$ 
  be the sequence of
  triangulations 
  produced by Algorithm~\ref{algo:AFEM} with the energy of
  Section~\ref{sec:eminStokes} and the estimator of Section~\ref{sec:errbndStokes}. 
  There exist constants $C,\tilde C \ge1$, such 
  that 
  \begin{gather*}
    \norm{\gradnc(u-\uT[\tria_k])}^2+ \norm{p-p_{\tria_k}}^2 +\osc(\tria_k)^2\\
   \qquad\qquad\qquad\qquad\qquad\qquad\le \tilde C
    \big(\norm{\gradnc(u-\uT)}^2+ \norm{p-p_\tria}^2 +\osc(\tria)^2\big)
    \intertext{for all $\tria\in\bbT$ with}~C\#(\tria\setminus\tria_\bot)\le \#(\tria_k\setminus\tria_\bot).     
  \end{gather*}
  We have $C \Cleq 1/\mu^2$ depending on the constants
    in Corollary~\ref{c:errbndenergyST} and Theorem~\ref{t:LDE-ST}. The constant $\tilde C$ depends on 
  the constants in Lemma~\ref{l:ediff=errST}.
\end{theorem}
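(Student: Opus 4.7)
The plan is to simply assemble the pieces that have already been established: the generalised Stokes energy $\energy$ defined in Section~\ref{sec:eminStokes} is shown to satisfy all three abstract hypotheses of Theorem~\ref{thm:IO}, and then Lemma~\ref{l:ediff=errST} is used to translate the abstract energy-inequality conclusion into the stated error+pressure+oscillation inequality.

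First I would verify the three hypotheses of the abstract framework in Section~\ref{sec:energy} for our concrete Stokes energy. The monotonicity assumption~\eqref{eq:emon} is immediate from the first part of Lemma~\ref{l:ediff=errST}, since $\energy(\tria)-\energy(\tria_\star)$ is equivalent (with positive constants) to a nonnegative quantity whenever $\tria_\star\ge\tria$. The localised error-bound assumption~\eqref{eq:errbnd} is precisely Corollary~\ref{c:errbndenergyST} (the global case $\tria_\star=\tria^\top$ taking care of~\eqref{eq:errbnda} and the general case of~\eqref{eq:errbndb}). Finally, the lower diamond estimate~\eqref{eq:LDE} is exactly the content of Theorem~\ref{t:LDE-ST}.

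With all three abstract hypotheses verified, Theorem~\ref{thm:IO} applies and yields a constant $C\ge1$ with $C\Cleq 1/\mu^2$ (depending on the equivalence constants from Corollary~\ref{c:errbndenergyST} and Theorem~\ref{t:LDE-ST}) such that
\begin{equation*}
  \energy(\tria_k)\le\energy(\tria)\qquad\text{for all}~\tria\in\bbT~\text{with}~C\#(\tria\setminus\tria_\bot)\le\#(\tria_k\setminus\tria_\bot).
\end{equation*}
Subtracting $\energy(\tria^\top)$ from both sides and using the monotonicity~\eqref{eq:emon} (so that the energy gaps are nonnegative), I obtain
\begin{equation*}
  \energy(\tria_k)-\energy(\tria^\top)\le\energy(\tria)-\energy(\tria^\top).
\end{equation*}

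To conclude, I apply the second (global) equivalence of Lemma~\ref{l:ediff=errST} to both sides. On the left it gives $\energy(\tria_k)-\energy(\tria^\top)\eqsim\norm{\gradnc(\bfu-\buT[\tria_k])}^2+\norm{p-p_{\tria_k}}^2+\osc(\tria_k)^2$, and analogously on the right with $\tria$ in place of $\tria_k$. Combining the two equivalences absorbs the hidden constants into a single multiplicative constant $\tilde C\ge 1$ which depends only on the equivalence constants appearing in Lemma~\ref{l:ediff=errST}, and the stated estimate follows. There is no real obstacle here: every ingredient has been set up in the preceding subsections, and the only thing this proof does is bookkeep the constants and invoke the abstract instance optimality theorem; the genuine work lay entirely in Lemmas~\ref{l:boundpStokes} and~\ref{l:bestapproxStokes} and Theorems~\ref{t:errbndST} and~\ref{t:LDE-ST}, all of which are already in hand.
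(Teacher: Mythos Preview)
Your proposal is correct and follows essentially the same approach as the paper: you verify the abstract hypotheses~\eqref{eq:emon}, \eqref{eq:errbnd}, and \eqref{eq:LDE} via Lemma~\ref{l:ediff=errST}, Corollary~\ref{c:errbndenergyST}, and Theorem~\ref{t:LDE-ST}, invoke Theorem~\ref{thm:IO}, and then translate the energy inequality into the total-error inequality by Lemma~\ref{l:ediff=errST}. The paper's proof is just a one-sentence version of exactly this argument.
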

\begin{proof}
    The preceding considerations of Section~\ref{sec:stokes}
 (Lemma~\ref{l:ediff=errST}, Corollary~\ref{c:errbndenergyST}, and 
Theorem~\ref{t:LDE-ST}) verified the 
assumptions~\eqref{eq:emon}, \eqref{eq:errbnd}, and 
\eqref{eq:LDE} and therefore the conditions of
  Theorem~\ref{thm:IO} and thus, the assertion follows from Lemma~\ref{l:ediff=errST}. 
\end{proof}

  \providecommand{\bysame}{\leavevmode\hbox to3em{\hrulefill}\thinspace}
\providecommand{\MR}{\relax\ifhmode\unskip\space\fi MR }
\providecommand{\MRhref}[2]{
  \href{http://www.ams.org/mathscinet-getitem?mr=#1}{#2}
}
\providecommand{\href}[2]{#2}

\appendix

\section{A Modified Transfer Operator}
This appendix is concerned with the construction of the operator
$\transop_\tria^{\tria_\star}:\CR\to\CR[\tria_\star]$, $\tria \in\bbT$ and
$\tria_\star\in\widehat\bbT$ with $\tria\le\tria_\star$, and the proof
of 
Theorem~\ref{t:errbndtransop}. 
We recall that 
$\widehat{\tria}:=\refine(\tria;\sides(\tria))$ 
is the smallest admissible 
refinement of $\tria$ such that 
$\sides(\tria)\cap\sides(\widehat{\tria})=\emptyset$, i.e. all sides
of  $\tria$ 
are bisected in $\widehat\tria$.
Consequently, $\widehat{\tria}\wedge\tria_\star$ is
the smallest admissible refinement of $\tria$ such that 
$\sides(\widehat\tria)\cap(\sides(\tria)\setminus\sides(\tria_\star))=\emptyset$. 
This triangulation is the intermediate triangulation
from \cite{CarstensenGallistlSchedensack:2013}.

\begin{proposition}[intermediate triangulation]\label{p:propintermtria}
Let $T\in\tria$ and $\hat T\in\widehat{\tria}\wedge\tria_\star$ 
with $\hat T\subset T$, then
$h_{\hat T}\leq h_T\leq 4 h_{\hat T}$.
Moreover, we have
$\tria\cap\tria_\star=\tria\cap(\widehat{\tria}\wedge\tria_\star)$, 
$\sides(\tria)\cap\sides(\tria_\star)
=\sides(\tria)\cap\sides(\widehat{\tria}\wedge\tria_\star)$, and
$\sides(\tria)\setminus\sides(\widehat \tria\wedge\tria_\star)
=\sides(\tria)\setminus\sides(\tria_\star)$.
\end{proposition}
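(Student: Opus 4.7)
The plan is to exploit the binary bisection-tree structure underlying NVB. Each $T\in\tria_\bot$ roots its own bisection tree, every $\tria\in\bbT$ corresponds to an antichain of tree-nodes whose regions tile $\Omega$, the partial order on $\bbT$ agrees with the ancestor/descendant relation, and the finest common coarsening $\tria_1\wedge\tria_2$ is the triangulation whose element at any point is the deepest common ancestor, in these trees, of the elements of $\tria_1$ and $\tria_2$ at that point. Conformity of this meet is standard NVB theory using the matching assumption~\eqref{ass:matching}. The first observation I need is that $\widehat{\tria}=\refine(\tria;\sides(\tria))$ refines each $T\in\tria$ by exactly three NVB-bisections: one to bisect the refinement edge of $T$, and one more in each of the two children, whose refinement edges are precisely the two remaining original edges of $T$. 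This produces four grandchildren of $T$ at depth $2$ in its subtree, each of area $|T|/4$.

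With this in hand, the mesh-size bound is immediate. For $\hat T\in\widehat{\tria}\wedge\tria_\star$ with $\hat T\subset T\in\tria$, $\hat T$ is the deepest common ancestor of a depth-$2$ element of $\widehat{\tria}$ and some $\tria_\star$-element at depth $\geq 0$ (using $\tria\leq\tria_\star$), so $\hat T$ sits between depth $0$ and depth $2$. Hence $|T|/4\leq|\hat T|\leq|T|$ and therefore $h_T/2\leq h_{\hat T}\leq h_T$, which is stronger than the claimed bound. The identity $\tria\cap\tria_\star=\tria\cap(\widehat{\tria}\wedge\tria_\star)$ follows by the same picture: for $T\in\tria$, $T$ lies in the meet iff the deepest common ancestor at its interior points has depth $0$ (i.e.\ equals $T$), and since $\widehat{\tria}$ always contributes depth-$2$ elements within $T$, this happens exactly when $\tria_\star$ also contributes depth-$0$ there, i.e., when $T\in\tria_\star$.

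For the side-set claims (equivalent under taking complements in $\sides(\tria)$), I would fix $S\in\sides(\tria)$ and handle each adjacent macro-element $T_i\in\tria$ separately. A standard NVB property is that the first bisection of $T_i$ leaves two of its edges intact, and the first subsequent bisection of the child inheriting $S$ bisects $S$. Consequently, $S$ is not bisected in $\tria_\star|_{T_i}$ iff the $\tria_\star$-elements adjacent to $S$ inside $T_i$ sit at depth at most $1$. In that case their deepest common ancestor with the depth-$2$ $\widehat{\tria}$-elements is that same depth-$\leq 1$ element, so $S$ remains unbisected in the meet. Conversely, if $\tria_\star$ bisects $S$, the adjacent $\tria_\star$-element carries the midpoint of $S$ as a vertex; the meet-element there, being the deepest common ancestor of that element with a depth-$2$ $\widehat{\tria}$-element, likewise carries the midpoint on its boundary, so $S$ is bisected in the meet.

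The main technical obstacle is formalising the deepest-common-ancestor description of the NVB meet and verifying that it indeed produces a conforming triangulation; this is the content of the lattice structure recalled in Section~\ref{sec:refinement} and rests crucially on~\eqref{ass:matching}. Once this structural fact and the elementary NVB bookkeeping about which edges survive a given number of bisections are in place, the proposition reduces to the routine case analysis sketched above.
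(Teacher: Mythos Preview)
The paper states this proposition without proof; it is treated as an elementary consequence of the NVB lattice structure (the intermediate triangulation originates in \cite{CarstensenGallistlSchedensack:2013}). Your argument is therefore not competing with a paper proof but supplying one, and it does so correctly. The identification $\widehat{\tria}=\tria^{++}$, i.e.\ that $\widehat{\tria}$ consists precisely of the four depth-$2$ grandchildren of each $T\in\tria$, is exactly the right input (and is consistent with Remark~\ref{rem:est}); together with the pointwise description of $\widehat{\tria}\wedge\tria_\star$ as the shallower of the two overlying elements, the mesh-size bound and the element identity follow at once. Your observation that one even obtains $h_T\le 2h_{\hat T}$ is correct; the paper's constant $4$ is simply not sharp.

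For the side identities your forward direction is fine. The converse is essentially right but would benefit from one explicit line: once $S\in\sides(\tria)$ is bisected in the \emph{conforming} $\tria_\star$, the midpoint $m$ is a vertex of $\tria_\star$ on both sides of $S$; on a side $T_i$ where $S$ is not the refinement edge, this forces the adjacent $\tria_\star$-element to have depth $\ge 2$, so the meet element there has depth exactly $2$ and carries $m$ as a vertex. With that sentence added, the case analysis is complete.
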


For $z\in\nodes(\widehat{\tria}\wedge\tria_\star)$, 
we call $(\widehat{\tria}\wedge\tria_\star)(z)
:=\{T\in\widehat{\tria}\wedge\tria_\star : z\in T\}$ the star of $z$
in $(\widehat{\tria}\wedge\tria_\star)$.  
Let $T\in (\widehat{\tria}\wedge\tria_\star)(z)$, then we define the
set of refined edge connected simplices in
$(\widehat{\tria}\wedge\tria_\star)(z)$ by 
\begin{align*}
  \mathcal{Z}(z;T):=\{&T'\in\widehat{\tria}\wedge\tria_\star : 
      \exists T=T_1,\dots,T_J=T'
      \in(\widehat{\tria}\wedge\tria_\star)(z), J\in\setN,\\
      & \text{such that}~T_j\cap T_{j+1}
          \in\sides(\tria_\star)\setminus
          \sides(\tria), j=1,\dots, J-1\}.
\end{align*}
Note that, apart from a refined analysis, the definition of
$\mathcal{Z}(z;T)$ is the main difference in
the construction of the transfer operator compared to 
\cite{CarstensenGallistlSchedensack:2013}.

We define an auxiliary operator 
$\av_\tria^{\tria_\star}:\CR\to \{v\in L^2(\Omega):v\vert_T\in P_1(T)
\text{ for all }T\in \widehat{\tria}\wedge\tria_\star\}$ by averaging
over the elements in $\mathcal{Z}(z;T)$. In particular,  
for $z\in\nodes(\widehat{\tria}\wedge\tria_\star)$ 
and $T\in(\widehat{\tria}\wedge\tria_\star)(z)$ we set
\begin{align*}
 \av_\tria^{\tria_\star} \vT\vert_T(z):=
 \begin{cases}
   \frac1{ \#\mathcal{Z}(z;T)}\sum_{T'\in\mathcal{Z}(z;T)} \vT\vert_{T'}(z),&\text{if}~z\in\Omega
   \\
   0,&\text{if}~z\in\partial\Omega.
 \end{cases}
\end{align*}
\begin{lemma}\label{l:avcontinuous}
We have that $\av_\tria^{\tria_\star} \uT$ is continuous on 
$\Omega\setminus\bigcup\{S\in\sides(\tria_\star)\cap\sides(\tria)\}$.
\end{lemma}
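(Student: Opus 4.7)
The plan is to exploit that $\av_\tria^{\tria_\star}\uT$ is, by construction, piecewise $P_1$ on the intermediate triangulation $\widehat{\tria}\wedge\tria_\star$. Continuity across a side $\sigma\in\sides(\widehat{\tria}\wedge\tria_\star)$ is therefore equivalent to the two restrictions from the adjacent elements $T_1,T_2\in\widehat{\tria}\wedge\tria_\star$ agreeing at the two endpoints of $\sigma$. The lemma thus reduces to verifying an endpoint identity for every $\sigma\in\sides(\widehat{\tria}\wedge\tria_\star)$ that does not lie in the exceptional set $\bigcup\{S\in\sides(\tria)\cap\sides(\tria_\star)\}$; boundary endpoints are automatic since $\av_\tria^{\tria_\star}\uT$ is defined to vanish there.

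For an interior endpoint $z$, the defining formula expresses $\av_\tria^{\tria_\star}\uT|_{T_i}(z)$ as the arithmetic mean of $\uT|_{T'}(z)$ over $T'\in\mathcal{Z}(z;T_i)$, so the desired identity follows as soon as $\mathcal{Z}(z;T_1)=\mathcal{Z}(z;T_2)$. The relation defining $\mathcal{Z}(z;\cdot)$ is symmetric (reverse the chain) and transitive (concatenate two chains), hence an equivalence relation on the star $(\widehat{\tria}\wedge\tria_\star)(z)$; the equality of classes therefore amounts to exhibiting any chain in the star joining $T_1$ to $T_2$ through sides in $\sides(\tria_\star)\setminus\sides(\tria)$. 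The natural candidate is the one-step chain $T_1\to T_2$, which succeeds provided $\sigma\in\sides(\tria_\star)\setminus\sides(\tria)$.

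The inclusion $\sigma\notin\sides(\tria)$ follows by a short contradiction: if $\sigma\in\sides(\tria)$, the hypothesis $\sigma\notin\sides(\tria)\cap\sides(\tria_\star)$ forces $\sigma\notin\sides(\tria_\star)$, so $\tria_\star$ bisects $\sigma$; but $\widehat{\tria}=\refine(\tria;\sides(\tria))$ also bisects every side of $\tria$ by construction, hence the common coarsening $\widehat{\tria}\wedge\tria_\star$ bisects $\sigma$ as well, contradicting $\sigma\in\sides(\widehat{\tria}\wedge\tria_\star)$.

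The main obstacle will be the complementary inclusion $\sigma\in\sides(\tria_\star)$: a priori $\tria_\star$ could further split a side of $\widehat{\tria}\wedge\tria_\star$ that $\widehat{\tria}$ leaves intact. I plan to dispatch this by combining the depth bound $h_T\le 4\,h_{\hat T}$ from Proposition~\ref{p:propintermtria}, which caps the refinement depth of $\widehat{\tria}\wedge\tria_\star$ within each $T\in\tria$ at essentially two NVB-levels, with a case analysis on the local shape of $\sigma$ (a half of a side of $\tria$, or an internal edge of $\widehat{\tria}|_T$). In each configuration the NVB bisection rules together with the matching assumption~\eqref{ass:matching} should force any further bisection of $\sigma$ inside $\tria_\star$ to also appear in $\widehat{\tria}$, producing the same contradiction as above. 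Should this direct route fail in some exotic pattern, the fallback is to construct an explicit indirect chain within the star at $z$, exploiting that NVB only bisects the refinement edge so that the surrounding internal edges of $\widehat{\tria}|_T$ remain in $\sides(\tria_\star)\setminus\sides(\tria)$ and can serve as connectors between $T_1$ and $T_2$.
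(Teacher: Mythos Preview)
Your reduction to showing $\mathcal{Z}(z;T_1)=\mathcal{Z}(z;T_2)$ via the one-step chain through $\sigma=T_1\cap T_2$ is exactly the paper's argument; its proof is the single sentence ``$T\cap T'\notin\sides(\tria_\star)\cap\sides(\tria)$ implies $\mathcal{Z}(z;T)=\mathcal{Z}(z;T')$''. Your verification that $\sigma\notin\sides(\tria)$ is also correct (and is in fact just Proposition~\ref{p:propintermtria}: $\sides(\tria)\cap\sides(\widehat{\tria}\wedge\tria_\star)=\sides(\tria)\cap\sides(\tria_\star)$).

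The obstacle you identify, however, is an artefact of a typo rather than a genuine difficulty. In the definition of $\mathcal{Z}(z;T)$ the condition on the connecting sides should read $T_j\cap T_{j+1}\in\sides(\widehat{\tria}\wedge\tria_\star)\setminus\sides(\tria)$, not $\sides(\tria_\star)\setminus\sides(\tria)$; this is precisely how the chains are used later in the proof of Theorem~\ref{t:errbndtransop}. With the corrected definition your one-step chain applies immediately, since $\sigma\in\sides(\widehat{\tria}\wedge\tria_\star)$ by hypothesis and $\sigma\notin\sides(\tria)$ by the argument above. No case analysis is needed.

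It is worth noting that your proposed workaround would not succeed if the literal definition were kept: the depth bound $h_T\le 4h_{\hat T}$ from Proposition~\ref{p:propintermtria} constrains only $\widehat{\tria}\wedge\tria_\star$, not $\tria_\star$, which may be arbitrarily fine. Hence a side $\sigma\in\sides(\widehat{\tria}\wedge\tria_\star)\setminus\sides(\tria)$ can genuinely be bisected in $\tria_\star$, and the fallback indirect chain around the star can be blocked by sides lying in $\sides(\tria)\cap\sides(\tria_\star)$. So the typo is not merely cosmetic; the intended definition is the one that makes both this lemma and Theorem~\ref{t:errbndtransop} go through.
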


\begin{proof}
Let $T,T'\in(\widehat{\tria}\wedge\tria_\star)$
with $T\cap T'\not\in \sides(\tria_\star)\cap\sides(\tria)$,
then $\mathcal{Z}(z;T)=\mathcal{Z}(z;T')$ for $z\in \nodes(\tria)\cap T\cap T'$,
and therefore $\av_\tria^{\tria_\star} \uT\vert_T(z) 
= \av_\tria^{\tria_\star} \uT\vert_{T'}(z)$.
\end{proof}

The transfer operator then defines a
function $\transop_\tria^{\tria_\star}\vT\in\CR[\widehat\tria\cap\tria_\star]$ by
\begin{align}\label{eq:deftransop}
  (\transop_{\tria}^{\tria_\star} \vT)(\midpoint{S})=\begin{cases}
                   \vT(\midpoint{S}) 
                   &\text{if}~S\in
                   \sides(\tria_\star)\cap \sides(\tria),
                   \\
                   (\av_\tria^{\tria_\star} \vT)(\midpoint{S})&\text{if}~
                         S\in\sides(\tria_\star)
                         \setminus\sides(\tria).
                  \end{cases}
\end{align}

\begin{theorem}
We have that $\transop_\tria^{\tria_\star} \vT\vert_T = \vT\vert_T$
for all $T\in\tria\cap\tria_\star$ and
$\transop_\tria^{\tria_\star} \vT\in
\CR[\widehat{\tria}\wedge\tria_\star]\cap\CR[\tria_\star]$.
\end{theorem}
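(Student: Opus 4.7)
The three claims to verify are: (i) $\transop_\tria^{\tria_\star}\vT|_T=\vT|_T$ for $T\in\tria\cap\tria_\star$, (ii) $\transop_\tria^{\tria_\star}\vT\in\CR[\widehat\tria\wedge\tria_\star]$, and (iii) $\transop_\tria^{\tria_\star}\vT\in\CR[\tria_\star]$. For (i), Proposition~\ref{p:propintermtria} places $T$ in $\widehat\tria\wedge\tria_\star$ and forces the three sides of $T$ into $\sides(\tria)\cap\sides(\tria_\star)$; the first case of~\eqref{eq:deftransop} therefore prescribes the three edge-midpoint values of $\transop_\tria^{\tria_\star}\vT|_T$ to be those of $\vT|_T$, and since an affine function on a triangle is determined by these three values, (i) follows.

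For (ii), the crux is to show that on every $T'\in\widehat\tria\wedge\tria_\star$ the values prescribed by~\eqref{eq:deftransop} are consistent with a single affine function on $T'$, even when $T'$ is further refined in $\tria_\star$. I would split into cases on how $T'$ sits inside $\tria$. If $T'\in\tria\cap\tria_\star$, then (i) applies. Otherwise $T'\subsetneq T_0$ for some $T_0\in\tria$; if $T'$ retains a side $S_1\in\sides(\tria)$, then by Proposition~\ref{p:propintermtria} $S_1\in\sides(\tria_\star)$, and newest-vertex bisection makes $S_1$ the refinement edge of $T'$, so $T'$ cannot be bisected further in $\tria_\star$ without bisecting $S_1$, forcing $T'\in\tria_\star$ so that no interior $\tria_\star$-midpoints lie in $T'$; if instead all three sides of $T'$ belong to $\sides(\widehat\tria\wedge\tria_\star)\setminus\sides(\tria)$, then every midpoint value in~\eqref{eq:deftransop} comes from $\av_\tria^{\tria_\star}\vT$ and matches $\av_\tria^{\tria_\star}\vT|_{T'}$, which is already affine on $T'$, so $\transop_\tria^{\tria_\star}\vT|_{T'}=\av_\tria^{\tria_\star}\vT|_{T'}$ and this identification automatically reconciles the prescribed values at any interior $\tria_\star$-midpoint inside $T'$. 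With single-affinity on each $T'$ at hand, the jump condition across $S\in\sides(\widehat\tria\wedge\tria_\star)$ reduces to single-valuedness of the midpoint value, which is supplied by $\vT\in\CR$ for $S\in\sides(\tria)\cap\sides(\tria_\star)$ and by Lemma~\ref{l:avcontinuous} for $S\in\sides(\widehat\tria\wedge\tria_\star)\setminus\sides(\tria)$. Boundary midpoints of $\widehat\tria\wedge\tria_\star$ yield zero either directly from $\vT\in\CR$ or because both endpoints of $S$ lie on $\partial\Omega$ and $\av_\tria^{\tria_\star}\vT$ is set to zero there.

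For (iii), piecewise affinity on $\tria_\star$ is inherited from (ii). An interior side $S\in\sides(\tria_\star)$ either coincides with a side in $\sides(\widehat\tria\wedge\tria_\star)$, in which case (ii) yields equal midpoint values from both sides and the midpoint quadrature rule applied to the affine jump gives $\int_S\jump{\transop_\tria^{\tria_\star}\vT}\ds=0$, or it lies interior to some $T'\in\widehat\tria\wedge\tria_\star$, where single-affinity forces the jump to vanish pointwise. For a boundary side $S\subset\partial\Omega$ of $\tria_\star$ contained in some $\tilde S\in\sides(\widehat\tria\wedge\tria_\star)$, the case analysis of (ii) shows that the affine restriction of $\transop_\tria^{\tria_\star}\vT$ to the adjacent element of $\widehat\tria\wedge\tria_\star$ vanishes at both endpoints of $\tilde S$, hence identically on $\tilde S$, so $\int_S\transop_\tria^{\tria_\star}\vT\ds=0$ on every sub-side. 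The main obstacle is the single-affinity step in (ii): it is precisely where the modified definition of $\mathcal Z(z;T)$ via refined-edge connectivity through $\sides(\tria_\star)\setminus\sides(\tria)$ combines with the NVB-specific information in Proposition~\ref{p:propintermtria} to exclude the only potentially inconsistent configuration, namely a strictly refined sub-triangle still carrying a $\sides(\tria)$-side whose prescribed midpoint value would be $\vT$ rather than $\av_\tria^{\tria_\star}\vT$.
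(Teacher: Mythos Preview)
Your argument is essentially correct and rests on the same NVB observation as the paper, but you organise it differently. The paper reads the definition~\eqref{eq:deftransop} as prescribing the degrees of freedom of an element of $\CR[\widehat{\tria}\wedge\tria_\star]$ (values at midpoints of $\sides(\widehat{\tria}\wedge\tria_\star)$ only), so (ii) is immediate by construction; the NVB work is then done entirely in (iii), where the paper shows that for any $S\in\sides(\tria_\star)\setminus\sides(\widehat{\tria}\wedge\tria_\star)$ the containing element $K\in\widehat{\tria}\wedge\tria_\star$ has no side in $\sides(\tria)$, hence $\transop_\tria^{\tria_\star}\vT|_K=\av_\tria^{\tria_\star}\vT|_K$ and Lemma~\ref{l:avcontinuous} gives continuity across $S$. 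You instead read~\eqref{eq:deftransop} literally as prescribing values at all $\sides(\tria_\star)$-midpoints, which introduces an over-determination issue that you resolve in (ii) by the same NVB fact (a proper descendant of $T_0\in\tria$ retaining a side $S_1\in\sides(\tria)$ must be a direct child with refinement edge $S_1$, hence cannot be further refined while $S_1\in\sides(\tria_\star)$). Both routes arrive at the key identification $\transop_\tria^{\tria_\star}\vT=\av_\tria^{\tria_\star}\vT$ on elements of $\widehat{\tria}\wedge\tria_\star$ that are genuinely refined in $\tria_\star$.

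There is one small gap in your (iii): your dichotomy ``$S$ coincides with a side of $\widehat{\tria}\wedge\tria_\star$'' versus ``$S$ lies interior to some $T'\in\widehat{\tria}\wedge\tria_\star$'' omits the case where $S\in\sides(\tria_\star)$ is a \emph{proper sub-segment} of some $\tilde S\in\sides(\widehat{\tria}\wedge\tria_\star)$. This is easily handled with what you already have: since $\tilde S$ is bisected in $\tria_\star$, both elements of $\widehat{\tria}\wedge\tria_\star$ adjacent to $\tilde S$ are refined in $\tria_\star$, so by your case analysis in (ii) neither has a side in $\sides(\tria)$ and both carry $\transop_\tria^{\tria_\star}\vT=\av_\tria^{\tria_\star}\vT$; in particular $\tilde S\notin\sides(\tria)$, so Lemma~\ref{l:avcontinuous} yields continuity across $\tilde S$ and hence across $S$. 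You do treat exactly this sub-segment situation for boundary sides, so the interior case should be added in parallel.
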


\begin{proof}
Proposition~\ref{p:propintermtria} and the definition 
of $\transop_\tria^{\tria_\star}$ imply 
$\transop_\tria^{\tria_\star} \vT\vert_T = \vT\vert_T$
for all $T\in\tria\cap\tria_\star$. Thanks to \eqref{eq:deftransop} 
we have $\transop_\tria^{\tria_\star} \vT\in
\CR[\widehat{\tria}\wedge\tria_\star]$.

Let
$S\in\sides(\tria_\star)\setminus\sides(\widehat{\tria}\wedge\tria_\star)$
and $S\subset \partial T$ for some $T\in\tria_\star$. 
Newest vertex bisection yields for $T\subset K\in\widehat{\tria}\wedge\tria_\star$, that
$\{S'\in\sides(\widehat{\tria}\wedge\tria_\star):S'\subset \partial K\}\cap\sides(\tria)=\emptyset$.
Therefore, by  \eqref{eq:deftransop}, we have
$(\transop_\tria^{\tria_\star} \vT)|_T=(\av_\tria^{\tria_\star}
\vT)|_T$. We can argue analogously for the other triangle adjacent to
$S$. Thus, thanks to Lemma~\ref{l:avcontinuous} we have that
$\transop_\tria^{\tria_\star} \vT$ is continuous along $S$. This
proves $\transop_\tria^{\tria_\star} \vT\in
\CR[\tria_\star]$.
\end{proof}
We are now in the position to prove Theorem~\ref{t:errbndtransop}.

\begin{proof}[Proof of Theorem~\ref{t:errbndtransop}]
Let $T\in\tria\setminus\tria_\star$ and $\hat{T}\in\widehat{\tria}\wedge\tria_\star$ 
with $\hat{T}\subset
T$ and let $\hat S$ be a side of $\hat T$. We denote
by $\psi_{\hat S}\in\CR[\widehat{\tria}\wedge\tria_\star]$ 
the Crouzeix-Raviart basis function defined by 
$\psi_{\hat S}(\midpoint{\hat S})=1$ and $\psi_{\hat S}(\midpoint{S})=0$ 
for $S\in\sides(\widehat{\tria}\wedge\tria_\star)\setminus\{\hat S\}$.
Then the  affine function $(\uT-\transop_\tria^{\tria_\star}
\uT)\vert_{\widehat{T}}$ reads as
\begin{multline*}
 (\uT - \transop_\tria^{\tria_\star} \uT)\vert_{\hat{T}}
 \\=
  \sum_{\hat S\in\sides(\tria_\star)\setminus\sides(\tria),\,
  \hat S\subset \hat T} 
    \big(\uT\vert_{\hat{T}}(\midpoint{\hat S}) 
     - (\av_\tria^{\tria_\star} \uT)\vert_{\hat{T}}(\midpoint{\hat
       S})\big)\psi_{\hat S}.
\end{multline*}
Thanks to the triangle inequality and 
 $\norm[\hat{T}]{\hT^{-1}\psi_{\hat S}}
\eqsim\norm[\hat{T}]{\gradnc\psi_{\hat S}}\eqsim 1$, we have
\begin{align*}
  &\norm[\widehat{T}]{\hT^{-1}(\uT - \transop_\tria^{\tria_\star} \uT)}
  + \norm[\widehat{T}]{\gradnc(\uT - \transop_\tria^{\tria_\star}
    \uT)}
  \\
  &\qquad\qquad
   \Cleq \sum_{\hat S\in\sides(\tria_\star)\setminus\sides(\tria),\,
  \hat S\subset \hat T} 
    \abs{ \uT\vert_{\hat{T}}(\midpoint{\hat S}) 
        - (\av_\tria^{\tria_\star}
        \uT)\vert_{\hat{T}}(\midpoint{\hat S})} .
\end{align*}
We recall from Lemma~\ref{l:avcontinuous}, that 
 $(\uT-\av_\tria^{\tria_\star} \uT)\vert_{\hat S}\in P_1(\hat S)$ is affine on $\hat S$,
 $\hat S=\mathrm{conv}\{y_1,y_2\}\in\sides (\tria_\star)\setminus\sides(\tria)$, and thus
 we have from the definition of $\av_\tria^{\tria_\star}$ that
\begin{multline*}
\abs{\uT\vert_{\hat{T}}(\midpoint{\hat S})
  -(\av_\tria^{\tria_\star} \uT)\vert_{\hat{T}}(\midpoint{\hat S})}\\
\leq\sum_{k=1}^2\frac{
   \abs{
    \sum_{T'\in\mathcal{Z}(y_k;\hat{T})}
       ( \uT\vert_{\hat{T}}(y_k)- \uT\vert_{T'}(y_k))
    }}{
 2\,\#\mathcal{Z}(y_k;\hat{T})}.
\end{multline*}
Fix $k\in\{1,2\}$, $T'\in\mathcal{Z}(y_k;\hat{T})$,
and let $T_1,\ldots,T_J\in\mathcal{Z}(y_k;\widehat{T})$,
with $J\in\mathbb{N}$, $\hat{T}=T_1$, $T_J=T'$, and 
$T_j\cap T_{j+1}\in\sides(\widehat{\tria}\wedge\tria_\star)\setminus\sides(\tria)$
for $j=1,\ldots,J-1$. Then by a telescopic sum argument, we have
\begin{align*}
 \uT\vert_{\hat{T}}(y_k) - \uT\vert_{T'}(y_k)
=\sum_{j=1}^{J-1}( \uT\vert_{T_j}(y_k) - \uT\vert_{T_{j+1}}(y_k)) .
\end{align*}
Let 
$S=T_j\cap
T_{j+1}\in\sides(\widehat{\tria}\wedge\tria_\star)\setminus\sides(\tria)$
for some $j\in\{1,\ldots,J-1\}$.
Since $\jump{\uT}\vert_{S}$ is affine, H\"older's inequality yields
\begin{align*}
 \abs{\jump{\uT}\vert_{S}(y_k)}^2
   = 4 h_{S}^{-2} \Big(\int_{S}\abs{\jump{\uT}}\dx\Big)^2
   \leq 4 h_{S}^{-1} \norm[{S}]{\jump{\uT}}^2.
\end{align*}
The shape regularity of $\bbT$ implies $\#\mathcal{Z}(y_k;\hat{T})\eqsim 1$.
Let $\sides(\tria)(z):=\{S\in\sides(\tria) : z\in S\}$ 
denote the edges of $\tria$ which contain $z\in\nodes(\tria)$.
Summing over $\hat{T}\in\widehat{\tria}\wedge\tria_\star$ with 
$\hat{T}\subset T$ and
a Poincar\'e inequality along 
$S\in\sides(\tria)(z)\setminus\sides(\tria_\star)$ shows
\begin{align}
  \begin{split}
    \norm[T]{\hT^{-1}(\vT - \transop_\tria^{\tria_\star} \vT)}^2
    &+ \norm[T]{\gradnc(\vT - \transop_\tria^{\tria_\star} \vT)}^2\\
    &\hspace{-3mm} \Cleq \sum_{z\in\nodes(\tria)\cap T}
    \sum_{S\in\sides(\tria)(z)\setminus\sides(\tria_\star)} h_S
    \;\|\jump{\gradnc \vT \cdot\tangente}\|_{S}^2.
  \end{split}
\end{align}
Here we used that $h_{S}\eqsim h_{S'}$
for all $\sides(\widehat\tria\wedge\tria_\star)\ni S\subset  S'\in
\sides(\tria)$; see Proposition~\ref{p:propintermtria}. 
Summing over all $T\in\tria$, and accounting for the finite overlap of 
$\sides(\tria)(z)\setminus\sides(\tria_\star)$, $z\in\nodes(\tria)$,
proves the assertion.
\end{proof}

\end{document}